\def\supp{\mathop{\rm supp}\nolimits}
\newtheorem{theorem}{Theorem}
\newtheorem{lemma}[theorem]{Lemma}
\newtheorem{proposition}[theorem]{Proposition}
\newtheorem{corollary}[theorem]{Corollary}
\newtheorem{definition}[theorem]{Definition}
\newtheorem{remark}[theorem]{Remark}
\newtheorem{example}[theorem]{Example}
\renewenvironment{proof}[1][.]{%
\bigskip\noindent{\bf Proof#1 }}{%
\hfill$\square$\bigskip}
\begin{document}
\pagestyle{myheadings}
\markboth{\textsc{\footnotesize{A. Baraviera, E. R. Oliveira  and F. B. Rodrigues}}}{\textsc{\footnotesize{On the dynamics of a rational semigroup}}}
\title{On the dynamics of a rational semigroup on a convolution measure algebra}
\author{Baraviera, A. T. ,  Oliveira, E. R., Rodrigues, F. B.}
\maketitle

\begin{abstract}
We are going to study the dynamical properties of the rational semigroup $Q_{t}(\mu)$ where
$Q_{t}(\mu)= (1-t) \mu * (1- t \mu)^{-1},$  for $t \in [0,1)$, that is defined for  $\mu \in \mathcal{P}(G)$, the set of Borel probabilities over $(G, \cdot)$ an abelian compact topological group where we define the \textbf{convolution}, $\nu * \mu \in \mathcal{P}(G)$, as usual for a group
$\int f d(\nu * \mu)= \int \int f(xy) d\nu(x) d\mu(y),$
then $(\mathcal{P}(G), *)$ became a \textbf{convolution measure algebra} (CM-algebra). We investigate several properties for this semigroup (as the Stable Manifold Theorem, Asymptotic behavior, invariant sets, differential properties, stationary points, etc) and how they are related with the Choquet-Deny equation. As an application we give a complete description of this semigroup for finite abelian groups.
\end{abstract}

\tableofcontents

\section{Introduction}
\index{Introduction}
As a motivation we consider the classical Choquet-Deny equation (see \cite{CD} for additional definitions and results), $\nu*\mu=\mu$, for a fixed $\nu$. Such equation is fundamental for harmonic analysis on groups, in probability theory for stochastic process behavior, etc. In order to obtain solutions of this equation, one can consider for a fixed $\nu \in \mathcal{P}(G)$ and $t \in (0,1)$, a modified version of the Choquet-Deny equation given by
$(1-t)\nu +t \nu*\mu=\mu.$
The parameter $t$ defines the solution $\mu_{t}$ that solves the  Choquet-Deny equation, $\nu*\mu=\mu$ when $t \to 1^-$. The modified equation can be solved by the following formal computation
$$(1-t)\nu +t \nu*\mu=\mu \Leftrightarrow (1-t)\nu*(1- t\nu)^{-1}=\mu.$$
That is, $\mu_{t}=(1-t)\nu*(1- t\nu)^{-1}=\frac{(1-t)\nu}{1- t\nu}$ is a rational function that solves the discounted Choquet-Deny equation (see Proposition~\ref{dyn CD eq} for a formal proof of this fact). In order to make precise these computations we introduce the idea of rational maps on CM-algebras in the Section~\ref{CM-algebras}.

For compact topological (not necessarily finite) groups we provide a tool that we call Choquet-Deny Kernel to characterize the limit sets of probabilities that are positive in open sets, Theorem~\ref{haar}.

The rest of the text is dedicated to understand the main properties of the rational semigroup  $\mu_{t}, \, t \in [0,1)$, as the existence of an infinitesimal generator and differential properties. Finally, we use this tools to understand the asymptotic behavior of $\mu_{t}$. The main achievement here are the proof of the Stable Manifold Theorem  (Theorem~\ref{hyp stab manif general}) that provides the convergence rate and the complete characterization of the local structure of  hyperbolic sets close to the stationary points. Additionally, in Section~\ref{basicsets}, we give a complete description of the basic hyperbolic sets when $G$ is a finite group making use of the techniques developed in our recently published work \cite{BOR} on the asymptotic behavior of  convolution powers for finite groups. As examples we give a complete description of the basic sets and the dynamics for order 4 groups: $\mathbb{Z}_4$ and the Klein's group.

\section{Maps on CM-algebras}\label{CM-algebras}
\index{Maps on CM-algebras}

Now we are going to recall the basic properties of CM-algebras and its maps.

\subsection{Convolution measure algebra}
Let $(G, \cdot)$ be an \textbf{abelian} compact topological group. We denote
$$BM(G)=\{\text{ Borel signed measures on }G\},$$
$$BM^{+}(G)=\{\text{ Positive Borel measures on }G\},$$
$$BM^{1}(G)=\{\text{ Positive Borel measures on }G\text{ such that } \mu(1)=1 \}=\mathcal{P}(G),$$

The usual operations make $BM(G)$ a linear normed space, and $\mathcal{P}(G) \subset BM^{+}(G) \subset BM(G)$ is a compact subset.  We introduce, as usual, a product $*$ in $BM(G)$ that is the convolution
$$\int f d(\nu * \mu)= \int \int f(xy) d\nu(x) d\mu(y), \forall f \in C^0(G).$$
The \textbf{convolution} $\nu * \mu \in BM(G)$ then $(BM(G),+, *)$ is a \textbf{convolution measure algebra}(usually named \emph{CM-algebra}, see \cite{Bob})  with unity, that is, there is $ 1 \in BM(G)$, given by $1= \delta_{e}$, where $eg=ge=g, \, \forall g \in G$, that is
$$1 * \mu= \mu * 1= \mu,  \, \forall \mu  \in BM(G).$$

\begin{remark}
Even if $G$ is a locally compact group and the measures are complex valuated, the CM-algebra, still a Banach algebra under the norm of total variation\footnote{We recall the definition of the total variation metric,
$$
\|\mu-\nu\|=\|\mu-\nu\|_{TV}=2\sup_{A\in \text{Borel}(G)}|\mu(A)-\nu(A)|= \sup_{|f| \leq 1}\left|\int_{G} f d\mu \right|.
$$} (see \cite{J.Wermer,Rudin}). The unitary ball in $ BM(G)$ is just weak* compact, but we can work because the total variation topology is stronger.
\end{remark}

We said that $\nu$ is an inverse of $\mu$ if  $\nu*\mu=1$. The restriction of $(BM(G),+, *)$ to $(\mathcal{P}(G),+, *)$ is a weak* compact  set closed by convolution with unity, since $1 \in \mathcal{P}(G)$ and $\mathcal{P}(G)$ is closed under convolution.

Let $\mu \in \mathcal{P}(G)$ be a probability, we remember that the support of $\mu$ is the set
$\supp \mu = \overline{\bigcup_{\mu(E)>0} E}$.  In particular if $G$ is finite, $\supp \mu=\{ g_i \;|\; \mu(g_i)>0\}$. The support of a probability is contained in a subgroup invariant under the convolution.

\begin{lemma}\label{invar of supp}
If $H$ is a closed subgroup of $G$ (always if $G$ is finite) and $\supp \mu   \subseteq H$, $  \supp \nu  \subseteq H$ then  $\supp \mu * \nu  \subseteq H$.
\end{lemma}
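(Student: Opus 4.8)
The plan is to show that $\mu*\nu$ assigns full mass to the closed set $H$; the inclusion $\supp(\mu*\nu)\subseteq H$ then follows immediately from the definition of support.

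First I would record that $\mu(H)=\nu(H)=1$. Since $H$ is closed, $G\setminus H$ is open and disjoint from $\supp\mu$, hence $\mu(G\setminus H)=0$ (an open set disjoint from the support is null), and likewise $\nu(G\setminus H)=0$. Next, fix any $f\in C^{0}(G)$ with $\supp f\subseteq G\setminus H$; such $f$ exist in abundance since $G$ is compact Hausdorff (Urysohn). Because $H$ is a subgroup, $xy\in H$ whenever $x,y\in H$, so $f(xy)=0$ there; hence for every $y\in H$ — that is, for $\nu$-almost every $y$ — we get $\int f(xy)\,d\mu(x)=\int_{H}f(xy)\,d\mu(x)=0$, and therefore
$$\int f\,d(\mu*\nu)=\int\!\!\int f(xy)\,d\mu(x)\,d\nu(y)=0.$$
By inner regularity of finite Borel measures on the compact group $G$, $(\mu*\nu)(G\setminus H)=\sup\{\,\int f\,d(\mu*\nu): f\in C^{0}(G),\ 0\le f\le 1,\ \supp f\subseteq G\setminus H\,\}=0$, so $(\mu*\nu)(H)=1$. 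As $H$ is closed, this gives $\supp(\mu*\nu)\subseteq H$. (Equivalently, one could phrase the whole argument as: $\mu*\nu$ is the push-forward of $\mu\otimes\nu$ under multiplication $m(x,y)=xy$, and $m^{-1}(H)\supseteq H\times H$ has $\mu\otimes\nu$-measure $\mu(H)\nu(H)=1$.)

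In the finite case the argument collapses to a one-line computation with no measure-theoretic subtlety: for $g\notin H$ one has $(\mu*\nu)(\{g\})=\sum_{xy=g}\mu(\{x\})\,\nu(\{y\})$, and in every factorization $g=xy$ at least one of $x,y$ lies outside $H$ (otherwise $g=xy\in H$), so each summand vanishes and $(\mu*\nu)(\{g\})=0$. The only point requiring any care in the general statement is the passage from ``every $f$ supported off $H$ integrates to $0$'' to ``$(\mu*\nu)(G\setminus H)=0$'', which is precisely (inner) regularity of finite Borel measures on a compact Hausdorff space — automatic here, and entirely transparent when $G$ is metrizable or finite; everything else is just the defining property that a subgroup is closed under the group operation.
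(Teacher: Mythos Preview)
Your proof is correct and entirely self-contained, but it differs from the paper's argument. The paper does not compute $(\mu*\nu)(H)$ directly; instead it invokes Lemma~1 of Pym (\cite{Pym}), which gives the exact formula $\supp(\mu*\nu)=\overline{\supp\mu\cdot\supp\nu}$ for probability measures on compact semitopological semigroups, and then simply observes that $\supp\mu\cdot\supp\nu\subseteq H\cdot H\subseteq H$ and that $H$ is closed. Your route is more elementary in that it avoids the external citation and does not need the full equality of supports---only the inclusion, which you obtain by showing $(\mu*\nu)(G\setminus H)=0$ via the convolution integral (or, equivalently and even more cleanly, via the push-forward identity $(\mu*\nu)(H)=(\mu\otimes\nu)(m^{-1}(H))\ge(\mu\otimes\nu)(H\times H)=1$). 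The cost is a short appeal to regularity of Borel measures on a compact group, which you correctly flag; the paper's approach hides that analytic work inside Pym's lemma. Either argument is adequate here, and yours has the advantage of being transparent for a reader without access to \cite{Pym}.
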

\begin{proof}
Indeed, from  \cite{Pym}, Lemma 1, we get for probability measures on compact semitopological semigroups (in particular for compact groups) that
$$\supp \mu * \nu = \overline{\supp \mu \cdot \supp \nu},$$
what concludes the proof because $H$ is closed.
\end{proof}

\subsection{Formal series on CM-algebras}
On a CM-algebra we can define formal series, that converges in the weak* topology
$$a_0 + a_1 \mu + a_2 \mu^2 + ...$$
where $\sum a_n < \infty $ and $\mu^n =  \mu* \mu* ...* \mu$ and $\mu^0 =\delta_{e}=1$. In the next we will always denote $\delta_{e}=1$ where $e$ is the identity  of $G$ in order to simplify the computations. More generally, if $\displaystyle F(x)= \sum_{n=0}^{\infty} a_n x^n$ is a power series with positive convergence radius $R$ then
$$F(\mu)= \sum_{n=0}^{\infty} a_n \mu^{n},$$
defines a function in $F: BM(G) \to BM(G)$.  As follows, if the coefficients $a_n$ are  nonnegative then $F: BM^{+}(G) \to BM^{+}(G)$ and, if $F(1)=1$ then $F:\mathcal{P}(G) \to \mathcal{P}(G)$.

\begin{example}
Given $t \in (0,1)$ the function
$F(x)=(1-t)(1+  t  x +   t^2  x^2 +   t^3 x^3 +...) = \frac{(1-t)}{1- t x}$ is a positive geometric series.
Since $F(1)=1$,  $F$ defines a map $F_{t}: \mathcal{P}(G) \to \mathcal{P}(G)$,
$F_{t}(\mu)=(1-t)(1+  t  \mu +   t^2  \mu^2 +   t^3 \mu^3 +...)$
By analogy denote $F_{t}$ the geometric map in $\mathcal{P}(G)$.
\end{example}

\begin{example}
Other formal series in $\mathcal{P}(G)$, can be obtained by normalizing the power series of the exponential function, for instance
$\displaystyle F(x)= \frac{1}{e } \sum_{n=0}^{\infty} \frac{x^n}{ n!} = e^{(x-1)}$ thus $F(1)=1$ and
$\hat{e}(\mu)= \frac{1}{e}(1 +  \frac{\mu}{1!} + \frac{\mu^{2}}{2!} + \frac{\mu^{3}}{3!} + ...),$
defines $F(\mu)= \hat{e}(\mu)$ the exponential map in $\mathcal{P}(G)$.
\end{example}

\subsection{Invertible elements and rational maps}
The set $\mathcal{P}(G)$ has an affine structure so it is natural to consider segments from an invertible element. Given $t \in [0,1)$ and $\mu \in \mathcal{P}(G)$, then  $\nu= 1 - t \mu \in BM(G)$ has an  inverse,  given by the Liouville series
$$ 1+  t  \mu +  t^2  \mu^2 +  t^3 \mu^3 +...$$
denoted $\nu^{-1}$. Indeed,
$\nu * \nu^{-1} =(1 - t \mu) * (1+  t  \mu +  t^2  \mu^2 +  t^3 \mu^3 +...) = 1.$
From this we can define rational maps and to solve certain  linear equations in $\mathcal{P}(G)$.
We start by defining polynomial functions. A polynomial $S$ in $\mathcal{P}(G)$ is an expression $S(\mu)=a_0 1 + a_1 \mu + a_2 \mu^2 + ... + a_m \mu^m$
where $a_0  + a_1   +   ... + a_m =1$ an d $a_i \geq 0, \; i=0, ..., m$.
Given $t \in [0,1)$ and $S_1(\mu), S_2(\mu)$ polynomials we define the \textbf{rational function} $F_{t}: \mathcal{P}(G) \to \mathcal{P}(G)$ by, $$F_{t}(\mu)= (1-t) S_1(\mu) * (1- t S_2(\mu))^{-1} ,$$
that is well defined.

\begin{remark}
For algebraic purposes, is convenient, to denote
$$(1-t) S_1(\mu) * (1- t S_2(\mu))^{-1}= \frac{(1-t) S_1(\mu)}{1- t S_2(\mu)},$$
and from this point the quotient $\frac{a}{b}$ means $a * b^{-1}$ in the CM-algebra.
\end{remark}

\begin{example} Consider the fraction $F_{t}(\mu)=\frac{(1-t)}{1- \frac{t}{2}  - \frac{t}{2} \mu^2}$ then  $S_1(\mu)= 1$ and $S_2(\mu)=1/2 ( 1 + \mu^2)$.  Thus, we have the rational map $$F_{t}(\mu)= (1-t) * (1- (t/2) ( 1 + \mu^2))^{-1}.$$
\end{example}

\section{The rational semigroup}
\subsection{Definition of  the rational map $T$}

\begin{proposition}\label{dyn CD eq}
The modified Choquet-Deny equation $(1-t)\nu +t \nu*\mu=\mu$ has a unique solution $\mu_{t}$ that is a rational function of $\nu$.
\end{proposition}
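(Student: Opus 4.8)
The plan is to verify directly that $\mu_{t} = (1-t)\nu * (1 - t\nu)^{-1}$ satisfies the modified Choquet-Deny equation, and then to prove uniqueness using the invertibility of $1 - t\nu$ established in the excerpt. First I would observe that the Liouville series $(1 - t\nu)^{-1} = \sum_{n \geq 0} t^n \nu^n$ converges in the weak* topology because $t \in [0,1)$ and $\sum_n t^n < \infty$, so $\mu_{t}$ is a well-defined element of $BM(G)$; moreover since $(1-t)$ and all coefficients $t^n$ are nonnegative and the total mass of $\mu_t$ is $(1-t)\sum_n t^n = 1$, we get $\mu_{t} \in \mathcal{P}(G)$, so the candidate solution is indeed a probability measure (and visibly a rational function of $\nu$ in the sense of Section~\ref{CM-algebras}).

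Next I would substitute $\mu_{t}$ into the equation. The key computational step is the algebraic identity in the (commutative, associative) CM-algebra:
\begin{equation*}
(1-t)\nu + t\,\nu * \mu_{t} = (1-t)\nu + t\,\nu * (1-t)\nu * (1 - t\nu)^{-1} = (1-t)\nu * \bigl[ 1 + t\nu * (1-t\nu)^{-1} \bigr].
\end{equation*}
Then I would simplify the bracket using $1 = (1 - t\nu) * (1 - t\nu)^{-1}$, so that $1 + t\nu * (1-t\nu)^{-1} = \bigl[(1 - t\nu) + t\nu\bigr] * (1-t\nu)^{-1} = (1-t\nu)^{-1}$; substituting back gives $(1-t)\nu * (1-t\nu)^{-1} = \mu_{t}$, which is exactly the right-hand side. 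Here I would be careful to invoke commutativity of $*$ (valid since $G$ is abelian) to freely reorder factors, and to note that the Liouville series can be multiplied and rearranged termwise because of absolute convergence in total variation.

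For uniqueness, suppose $\mu$ and $\mu'$ both solve $(1-t)\nu + t\,\nu * \mu = \mu$. Subtracting, the signed measure $\eta = \mu - \mu' \in BM(G)$ satisfies $\eta = t\,\nu * \eta$, i.e.\ $(1 - t\nu) * \eta = 0$. Since $1 - t\nu$ is invertible (with inverse the Liouville series, as recalled in the excerpt), convolving both sides by $(1 - t\nu)^{-1}$ yields $\eta = 0$, hence $\mu = \mu'$.

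I do not expect a genuine obstacle here: the statement is essentially a formal manipulation legitimized by the convergence of the Liouville series. The one point deserving care is rigor about rearranging and multiplying the infinite series in the weak* (equivalently total-variation) topology — the factorization $(1-t)\nu + t\,\nu * \mu_t = (1-t)\nu * [1 + t\nu * (1-t\nu)^{-1}]$ and the telescoping of the bracket both implicitly use that these series converge absolutely in $\|\cdot\|_{TV}$ and that convolution is continuous and bilinear for this norm, which is exactly the Banach-algebra structure noted in the remark after the definition of the CM-algebra. Beyond that, the argument is complete.
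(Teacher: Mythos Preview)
Your proof is correct, but it follows a different route from the paper's. The paper argues dynamically: it introduces the affine map $F(\mu) = (1-t)\nu + t\,\nu * \mu$, computes its iterates explicitly as $F^{n}(\mu) = (1-t)\nu + (1-t)t\,\nu^{2} + \cdots + t^{n}(\nu^{n} * \mu)$, and observes that since $t^{n} \to 0$ this sequence converges (independently of the starting point) to $Q_{t}(\nu) = \sum_{k\geq 1}(1-t)t^{k-1}\nu^{k}$; uniqueness of the fixed point then follows from the fact that every orbit has the same limit. You instead verify directly that the rational expression $(1-t)\nu * (1-t\nu)^{-1}$ satisfies the equation by an algebraic identity, and obtain uniqueness by noting that the difference of two solutions is annihilated by the invertible element $1 - t\nu$. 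Your argument is shorter and exploits the Banach-algebra structure more transparently; the paper's iterative argument, on the other hand, is constructive (it tells you how to \emph{find} the solution starting from any initial guess) and establishes the slightly stronger fact that $Q_{t}(\nu)$ is a global attractor for $F$, which fits the dynamical theme of the article.
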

\begin{proof}
We introduce the map $F(\mu)= (1-t) \nu + t \nu*\mu$
and consider a dynamical argument. We claim that $F$ has a unique attracting point denoted by $\mu_{t}=Q_{t}(\nu)$, so this is the unique fixed point.

By iterating $F$ from any initial point $\mu$\\
$F(\mu)= (1-t) \nu + t \nu*\mu,$\\
$F^{2}(\mu)= (1-t) \nu + \beta  \nu*((1-t) \nu + t \nu*\mu)= (1-t) \nu + (1-t) t \nu^2 + t^2 (\nu^2 * \mu)$.\\
$F^{3}(\mu)= \dots$ we get
$$F^{n}(\mu)=  (1-t) \nu + (1-t) t  \nu^2 + ... + t^n (\nu^n * \mu) \to Q_{t}(\nu),$$
when $n \to \infty$, because $t^n \to 0$.
Since the unique limit point is invariant, $F(Q_{t}(\nu))=Q_{t}(\nu)$, what means that $$\mu_{t}= Q_{t}(\nu)=(1-t) \mu + (1-t) t  \mu^2 + (1-t) t^2  \mu^3 +...$$ solves the previous  equation \textbf{$(1-t) \nu + t \nu*\mu_{t} = \mu_{t}$}.
\end{proof}

In that case any accumulation point of the one parametric family $\mu_{t}, \; t \in [0,1)$, will be a solution of the Choquet-Deny Equation if $t \to 1^-$, so is natural to introduce the  rational map.

\begin{definition}The  \textbf{rational map} $T:\mathcal{P}(G) \to \mathcal{P}(G)$ is a Moebius map for  a fixed $t \in[0,1)$, $T(\mu)= Q_{t}(\mu)$ given by the fraction
$$T(\mu)=\frac{(1-t) \mu}{1- t \mu} $$
because $(1-t) \mu * (1- t \mu)^{-1} =(1-t) \mu + (1-t) t  \mu^2 + ...=Q_{t}(\mu).$
\end{definition}

\subsection{Map properties}
We are going to consider the main properties of the rational map, as a function.

\begin{lemma}\label{AA} For any $\mu, \nu \in \mathcal{P}(G)$,
  $$Q_{t}(\mu) - Q_{t}(\nu)= (1-t)*( 1- t \mu)^{-1}*( 1- t \nu)^{-1} *( \mu  -   \nu )$$
\end{lemma}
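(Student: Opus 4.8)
The plan is to verify the identity by a direct algebraic manipulation in the CM-algebra, exploiting the fact that the Liouville series $(1-t\mu)^{-1}$ is a genuine two-sided inverse and that the algebra is commutative (since $G$ is abelian, convolution is commutative). First I would recall that $Q_t(\mu) = (1-t)\,\mu * (1-t\mu)^{-1}$, so that
$$
Q_t(\mu) - Q_t(\nu) = (1-t)\Bigl[ \mu*(1-t\mu)^{-1} - \nu*(1-t\nu)^{-1} \Bigr].
$$
The key step is to put the bracketed difference over a common ``denominator'': multiply and divide by $(1-t\mu)^{-1}*(1-t\nu)^{-1}$, i.e. write
$$
\mu*(1-t\mu)^{-1} - \nu*(1-t\nu)^{-1} = (1-t\mu)^{-1}*(1-t\nu)^{-1}*\bigl[ \mu*(1-t\nu) - \nu*(1-t\mu) \bigr].
$$
This is legitimate because all four factors $1-t\mu$, $1-t\nu$ and their inverses commute, so rearranging is valid. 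Then the inner expression telescopes:
$$
\mu*(1-t\nu) - \nu*(1-t\mu) = \mu - t\,\mu*\nu - \nu + t\,\nu*\mu = \mu - \nu,
$$
again using commutativity of $*$ to cancel the cross terms $t\,\mu*\nu = t\,\nu*\mu$. Substituting back gives exactly
$$
Q_t(\mu) - Q_t(\nu) = (1-t)*(1-t\mu)^{-1}*(1-t\nu)^{-1}*(\mu - \nu).
$$

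The only points that need a word of care are the algebraic identities used to split the difference of two fractions: one should check that $a*b^{-1} - c*d^{-1} = b^{-1}*d^{-1}*(a*d - c*b)$ holds in this commutative algebra, which follows since $b^{-1}*d^{-1}*(a*d - c*b) = a*b^{-1} - c*d^{-1}$ after multiplying the right-hand side out and using $b*b^{-1}=d*d^{-1}=1$. I expect no real obstacle here: the group being abelian makes $(BM(G),+,*)$ a commutative ring, the inverses exist by the Liouville series argument from Section~\ref{CM-algebras}, and everything is a finite rearrangement of convergent weak* series. The mildest subtlety is simply being explicit that $1-t\mu$ and $(1-t\nu)^{-1}$ commute — but this is immediate from commutativity of convolution together with the fact that each inverse is a power series in the corresponding element.
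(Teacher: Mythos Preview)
Your proof is correct and follows essentially the same route as the paper: both reduce to the identity $\mu*(1-t\nu) - \nu*(1-t\mu) = \mu - \nu$ via the cancellation $t\,\mu*\nu = t\,\nu*\mu$, then use invertibility of $1-t\mu$ and $1-t\nu$. The only cosmetic difference is that the paper clears denominators by multiplying both sides by $(1-t\mu)*(1-t\nu)$ and then multiplies back by the inverses, whereas you factor the common denominator out directly; the underlying algebra is identical.
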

\begin{proof} Let $\mu, \nu \in \mathcal{P}(G)$ be fixed probabilities on a commutative group, we want to estimate $Q_{t}(\mu) - Q_{t}(\nu) \in BM(G)$.

\begin{align*}
&Q_{t}(\mu) - Q_{t}(\nu)= (1-t) \mu *( 1- t \mu)^{-1} -  (1-t) \nu *( 1- t \nu)^{-1}
\\&\Rightarrow[Q_{t}(\mu) - Q_{t}(\nu)]*( 1- t \mu)*( 1- t \nu)= (1-t)[ \mu *( 1- t \nu) -   \nu *( 1- t \mu)]
\\&\Rightarrow[Q_{t}(\mu) - Q_{t}(\nu)]*( 1- t \mu)*( 1- t \nu)= (1-t)[ \mu  -   \nu ]
\\&\Rightarrow Q_{t}(\mu) - Q_{t}(\nu)= (1-t)*( 1- t \mu)^{-1}*( 1- t \nu)^{-1} ( \mu  -   \nu )
\end{align*}
\end{proof}

\begin{proposition} \label{rationalmapproperties}
The rational map $T$ has the following properties\\
a)T is injective,\\
b)T is not surjective, except for $t=0$,\\
c)T is Lipschitz.
\end{proposition}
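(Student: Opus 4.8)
The plan is to establish the three properties in order, leaning heavily on Lemma~\ref{AA}. For part (a), injectivity, I would start from the identity in Lemma~\ref{AA}: if $Q_t(\mu) = Q_t(\nu)$ then $(1-t)*(1-t\mu)^{-1}*(1-t\nu)^{-1}*(\mu - \nu) = 0$ in $BM(G)$. Since $1-t\mu$ and $1-t\nu$ are invertible elements of the algebra (by the Liouville series construction, for $t \in [0,1)$), I can convolve both sides by $(1-t\mu)*(1-t\nu)$ and divide by $1-t \neq 0$ to conclude $\mu - \nu = 0$, hence $\mu = \nu$. This is the easy case and is essentially immediate from the lemma.

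For part (c), Lipschitz continuity, I would again use Lemma~\ref{AA} and the submultiplicativity of the total variation norm with respect to convolution: $\|Q_t(\mu) - Q_t(\nu)\| \le (1-t)\,\|(1-t\mu)^{-1}\|\,\|(1-t\nu)^{-1}\|\,\|\mu - \nu\|$. The key point is that for any $\mu \in \mathcal{P}(G)$ we have $\|(1-t\mu)^{-1}\| = \|\sum_{n\ge 0} t^n \mu^n\| \le \sum_{n\ge 0} t^n \|\mu^n\| = \sum_{n\ge 0} t^n = \frac{1}{1-t}$, using that $\|\mu^n\| = 1$ for probability measures. Therefore $\|Q_t(\mu) - Q_t(\nu)\| \le (1-t)\cdot\frac{1}{1-t}\cdot\frac{1}{1-t}\,\|\mu-\nu\| = \frac{1}{1-t}\|\mu - \nu\|$, so $T$ is Lipschitz with constant $\frac{1}{1-t}$.

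For part (b), non-surjectivity for $t \neq 0$, I expect this to be the main obstacle, since it requires producing a measure not in the image rather than manipulating an algebraic identity. The natural candidate is the unit $1 = \delta_e$: I would argue that $T(\mu) = 1$ would force $(1-t)\mu = 1 - t\mu$, i.e. $\mu = \frac{1}{1}\cdot 1 = 1$... but wait, that actually gives $\mu = 1$ as a solution, so $1$ \emph{is} in the image. Instead I would look at the expansion $T(\mu) = (1-t)\mu + (1-t)t\mu^2 + \cdots$ and observe that the coefficient of $1 = \delta_e$ in this series is at least the "mass at $e$" contributed by $(1-t)t\mu^2 + (1-t)t^2\mu^3 + \cdots$, so that when $\mu = \delta_g$ for $g$ of infinite order (or more carefully, by a mass/support argument) the image measure $T(\delta_g) = (1-t)\sum_{n\ge 1} t^{n-1}\delta_{g^n}$ always has a nonzero weight $(1-t)$ on the atom $\delta_g$ and weights forming a strictly geometric sequence; a target like $\frac12\delta_g + \frac12\delta_{g^2}$ (for suitable $g$) cannot arise because matching the first two atoms forces $t = \frac12$ and then the third atom $\delta_{g^3}$ must carry weight $\frac14 \neq 0$. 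More robustly, I would note that $T(\mu) \to Q_t(\mu)$ always has total "non-unit displacement" controlled by $t$: evaluating against characters $\chi \in \widehat{G}$, $\widehat{T(\mu)}(\chi) = \frac{(1-t)\widehat\mu(\chi)}{1 - t\widehat\mu(\chi)}$, which is a Möbius transformation of the disk; its image is a proper sub-disk not containing, e.g., the value $-1$ when $t>0$ (for a non-trivial character where $\widehat\mu$ can reach near $-1$). Concretely, pick $\chi$ a character of order $2$ and ask for $\nu$ with $\widehat\nu(\chi) = -1$ (e.g. $\nu$ uniform on a coset structure making $\widehat\nu(\chi)=-1$); since $z \mapsto \frac{(1-t)z}{1-tz}$ maps the closed disk into itself but sends $\partial$ to a circle through $\frac{t-1}{1+t} > -1$, the value $-1$ is not attained, so no $\mu$ has $T(\mu) = \nu$. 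The technical care needed is in exhibiting such a $\nu$ for a general compact abelian $G$ with a character of order $2$; if $\widehat G$ has no $2$-torsion one uses the analogous argument with the boundary of the disk not being fully covered for any nontrivial $\chi$, which suffices since $\chi$ is nontrivial for non-Haar $\mu$.
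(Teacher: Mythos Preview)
Your arguments for (a) and (c) are correct and essentially coincide with the paper's: both invoke Lemma~\ref{AA}, and the Lipschitz bound $\|(1-t\mu)^{-1}\|\le (1-t)^{-1}$ via the Liouville series is exactly what the paper uses.

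For (b) the two approaches diverge. The paper does not give a general argument at all: it exhibits one explicit counterexample, taking $G$ the Klein four-group and $\nu=\delta_b$, then solving the linear system $[(1-t)\delta_e+t\delta_b]*\mu=\delta_b$ by coordinates to see that any solution has a negative entry, hence is not a probability. Your route via characters is genuinely different and, in principle, more general and conceptual: the observation that $z\mapsto \frac{(1-t)z}{1-tz}$ sends the closed unit disk to a proper sub-disk tangent at $1$ is the right picture, and it immediately yields that $\delta_g$ (for $g\ne e$) is never in the image, since $T(\mu)=\delta_g$ would force $\widehat\mu(\chi)=\chi(g)/[(1-t)+t\chi(g)]$, which has modulus $>1$ for any $\chi$ with $\chi(g)\ne 1$. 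This is strictly stronger than the paper's example.

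That said, your write-up of (b) has a gap as it stands. The first attempt (matching atoms of a geometric series) is abandoned mid-stream, and the character argument is only completed under the extra hypothesis that $\widehat G$ has $2$-torsion. The fallback clause ``if $\widehat G$ has no $2$-torsion one uses the analogous argument with the boundary of the disk not being fully covered for any nontrivial $\chi$, which suffices since $\chi$ is nontrivial for non-Haar $\mu$'' is not an argument: you still need to exhibit a specific $\nu\in\mathcal{P}(G)$ whose Fourier coefficient lies outside the image, and the last phrase is a non sequitur. The clean fix is the one-line computation above, which needs no case split on torsion and recovers (in fact sharpens) the paper's conclusion.
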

\begin{proof}
a) We need to solve the equation
$\mu_{1} * (1- t \mu_{1})^{-1}=\mu_{2} * (1- t \mu_{2})^{-1}$
for each fixed $t$ with respect to $\mu_{i}  \in\mathcal{P}(G)$.
\begin{align*}
\mu_{1} * (1- t \mu_{1})^{-1}&=\mu_{2} * (1- t \mu_{2})^{-1}\\
\mu_{1} * (1- t \mu_{2})&=\mu_{2} *(1- t \mu_{1})\\
\mu_{1} - t \mu_{1} *\mu_{2}&=\mu_{2} - t \mu_{2} *\mu_{1}\\
\mu_{1} &= \mu_{2}
\end{align*}

b) If we suppose that $T$ is surjective then we can solve the equation
$(1-t) \mu * (1- t \mu)^{-1}= \nu$ or equivalently $[(1-t) \delta_{e} + t \nu ] * \mu = \nu$
for each fixed $t$  with respect to $\mu$ for all $\nu \in\mathcal{P}(G)$. However, it is not possible even for finite groups.
For example, if $G= Klein=\{e,a, b, ab\}$ we can take $\nu=\delta_{b}$ then the equation above for $0<t<1$ is
$$[(1-t) \delta_{e} + t \nu ] * \mu = \nu $$
$$[(1-t) \delta_{e} + t \delta_{b} ] * (x \delta_{e} + y \delta_{a} +z  \delta_{b}+w  \delta_{ab}) = \delta_{b}$$
$$((1-t)x + tz) \delta_{e} + ((1-t)y + tw) \delta_{a} +((1-t)z+tx)  \delta_{b}+((1-t)w + ty) \delta_{ab}= \delta_{b}.$$
From the first coordinate we get $(1-t)x + tz=0$. Or $x=z=0$, what is impossible because $(1-t)z+tx=1$, or $z= \frac{-t}{(1-t)} x <0$ what means that $\mu=x \delta_{e} + y \delta_{a} +z  \delta_{b}+w  \delta_{ab}$ is not a probability measure.

c) From Lemma~\ref{AA} we have
$$Q_{t}(\mu) - Q_{t}(\nu)  = (1-t) (\mu  - \nu) (1- t \mu)^{-1}(1- t \nu)^{-1}.$$
So,
$ \| Q_{t}(\mu) - Q_{t}(\nu) \|  = (1-t)\|(1- t \mu)^{-1}(1- t \nu)^{-1} \| \| \mu  - \nu \|,$
since $\|(1- t \mu)^{-1}\| \leq \frac{1}{(1-t)}$, we get
$\| Q_{t}(\mu) - Q_{t}(\nu) \|  \leq \frac{1}{(1-t)} \| \mu  - \nu \|.$
Thus $T= Q_{t}$ is Lipschitz continuous.
\end{proof}

\section{Dynamics of the rational semigroup}
In this section we are going to define the rational semigroup from the rational map and  study its dynamical properties.

\subsection{Semigroup properties}
From the iteration of $T=Q_{t}$ we have $Q_{t}^{0}(\mu)= \mu, \; Q_{t}^{1}(\mu)= Q_{t}(\mu),\; Q_{t}^{2}(\mu)= Q_{t}(Q_{t}(\mu)), etc$. In order to understand these iterates we introduce an operation $\Delta$ on the interval [0,1) and show $([0,1),\Delta)$
is actually a semigroup.

\begin{definition} The $\Delta$-semigroup, $([0,1), \triangle)$ is defined by the operation $\triangle : [0,1)\times [0,1) \to [0,1)$, given by
$$t_1 \triangle t_2 = 1- (1- t_1)(1- t_2).$$
\end{definition}

We can also consider the extended $\Delta$-semigroup, $([0,1], \triangle)$.  Here the value $t=1$ plays the role of $+\infty$ for positive additive semigroups $([0, \infty), +)$ that usually appears in the study of differential equations flows  on $\mathbb{R}$.

\begin{proposition}\label{Delta properties} The following properties are true for the $\Delta$-semigroup.\\
\begin{enumerate}
  \item $t \triangle 0 = t $ (on the extended $\Delta$-semigroup)
  \item $t \triangle 1 = 1 $ (on the extended $\Delta$-semigroup)
  \item $t_1 \triangle t_2 = t_2 \triangle t_1 $
  \item $(t_1 \triangle t_2) \triangle t_3 = t_1 \triangle (t_2 \triangle t_3) $
\end{enumerate}
\end{proposition}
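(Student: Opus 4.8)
The plan is to reduce every identity to a statement about ordinary multiplication by means of the substitution $s_i = 1 - t_i$. The map $t \mapsto 1-t$ is a bijection from $[0,1)$ onto $(0,1]$ (and from $[0,1]$ onto $[0,1]$ in the extended case), and under this change of variables the operation $\triangle$ becomes the usual product: if $s_1 = 1-t_1$ and $s_2 = 1-t_2$, then by definition $1-(t_1 \triangle t_2) = (1-t_1)(1-t_2) = s_1 s_2$. In particular $s_1 s_2 \in (0,1]$, so $t_1 \triangle t_2 \in [0,1)$ and the operation is well defined as claimed in the definition; on the extended semigroup $s_i$ ranges over $[0,1]$ and $s_1 s_2 \in [0,1]$, so $t_1 \triangle t_2 \in [0,1]$ there.

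Granting this, the four items follow immediately. For item~1, $t \triangle 0$ corresponds to $s \cdot 1 = s$, hence $t \triangle 0 = t$. For item~2, $t \triangle 1$ corresponds to $s \cdot 0 = 0$, hence $t \triangle 1 = 1$. Item~3 is just the commutativity of multiplication, $s_1 s_2 = s_2 s_1$. Item~4 is its associativity: both $(t_1 \triangle t_2)\triangle t_3$ and $t_1 \triangle (t_2 \triangle t_3)$ have $1$ minus their value equal to $s_1 s_2 s_3$, so they coincide.

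One could of course also verify each identity by direct expansion — for instance $1-(1-t)(1-0) = t$, $1-(1-t)(1-1)=1$, and for associativity $1-\bigl(1-(1-(1-t_1)(1-t_2))\bigr)(1-t_3) = 1-(1-t_1)(1-t_2)(1-t_3)$ after cancellation — but the change-of-variables viewpoint is cleaner and, in addition, explains \emph{why} $([0,1),\triangle)$ is a semigroup: it is simply an isomorphic copy of the multiplicative semigroup $((0,1],\cdot)$, and the extended $\Delta$-semigroup is a copy of $([0,1],\cdot)$. There is no genuine obstacle in this proof; the only point that requires a word is closure, i.e.\ that $t_1 \triangle t_2$ again belongs to $[0,1)$, and the substitution $s_i = 1-t_i$ makes this transparent.
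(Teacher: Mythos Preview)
Your argument is correct. The substitution $s_i = 1 - t_i$ indeed turns $\triangle$ into ordinary multiplication on $(0,1]$ (respectively $[0,1]$ in the extended case), so items 1--4 reduce to the identity, absorbing, commutative and associative laws for real multiplication; you also correctly note closure.

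The paper itself gives no proof of this proposition at all --- it is stated and immediately followed by the next definition, so the authors evidently regard the four identities as routine verifications from $t_1\triangle t_2 = 1-(1-t_1)(1-t_2)$. Your change-of-variables viewpoint is a slight upgrade: it not only dispatches the four items at once but also explains structurally that $([0,1),\triangle)$ is an isomorphic copy of the multiplicative semigroup $((0,1],\cdot)$, which is more informative than four separate expansions.
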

\begin{definition} The  \textbf{rational semigroup} $Q_{t}: [0,1) \times \mathcal{P}(G) \to \mathcal{P}(G)$ is the one parameter family defined by the $\Delta$-semigroup $([0,1), \triangle)$ on $\mathcal{P}(G)$ given by
$$(t, \mu)   \to Q_{t}(\mu).$$
\end{definition}
\begin{remark}
From now we will understand the rational map, $T= Q_{t},$ as the time $t$ iteration of the one parameter family  $Q_{t}$.
\end{remark}
\begin{lemma}\label{flow property} The one parameter family $Q_{t}$  has the semigroup property\\
$$Q_0 \mu=\mu \text{ and } Q_{t_1} \circ Q_{t_2}(\mu)=Q_{t_1 \triangle t_2}(\mu).$$
\end{lemma}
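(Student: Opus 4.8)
The plan is to verify the two identities directly from the closed form $Q_t(\mu) = (1-t)\mu * (1 - t\mu)^{-1}$, treating the quotient notation as a formal manipulation in the CM-algebra, exactly as licensed by the remarks preceding Lemma~\ref{AA}. The identity $Q_0\mu = \mu$ is immediate: at $t=0$ the factor $(1-t)$ becomes $1$ and $(1-t\mu)^{-1} = 1^{-1} = 1$, so $Q_0(\mu) = 1 * \mu * 1 = \mu$. The real content is the composition formula $Q_{t_1} \circ Q_{t_2} = Q_{t_1 \triangle t_2}$, where $t_1 \triangle t_2 = 1 - (1-t_1)(1-t_2)$.

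For the composition, I would set $\nu = Q_{t_2}(\mu) = (1-t_2)\mu * (1 - t_2\mu)^{-1}$ and compute $Q_{t_1}(\nu) = (1-t_1)\nu * (1 - t_1\nu)^{-1}$. Substituting and using commutativity of $*$ (all factors are power series in the single element $\mu$, hence commute), the numerator becomes $(1-t_1)(1-t_2)\,\mu * (1-t_2\mu)^{-1}$. For the denominator, I would write
\[
1 - t_1 \nu = 1 - t_1(1-t_2)\mu*(1-t_2\mu)^{-1} = \big[(1-t_2\mu) - t_1(1-t_2)\mu\big]*(1-t_2\mu)^{-1},
\]
and the bracket simplifies to $1 - \big(t_2 + t_1(1-t_2)\big)\mu = 1 - (t_1 \triangle t_2)\mu$, since $t_2 + t_1 - t_1 t_2 = 1 - (1-t_1)(1-t_2)$. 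Hence $(1 - t_1\nu)^{-1} = (1-t_2\mu) * (1 - (t_1\triangle t_2)\mu)^{-1}$, and multiplying numerator and denominator contributions together the factor $(1-t_2\mu)^{-1}*(1-t_2\mu)$ cancels, leaving $(1-t_1)(1-t_2)\,\mu * (1 - (t_1\triangle t_2)\mu)^{-1} = (1 - (t_1\triangle t_2))\,\mu*(1-(t_1\triangle t_2)\mu)^{-1} = Q_{t_1\triangle t_2}(\mu)$.

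The main point requiring care is justifying the algebraic steps — in particular that $1 - t_1\nu$ is invertible and that factoring out $(1-t_2\mu)^{-1}$ and cancelling is legitimate. This follows because $\triangle$ maps $[0,1)\times[0,1)$ into $[0,1)$ (established in Proposition~\ref{Delta properties}, or directly: $(1-t_1)(1-t_2)\in(0,1]$), so $1-(t_1\triangle t_2)\mu$ has a Liouville inverse by the discussion in the subsection on invertible elements; and all elements involved are norm-convergent power series in $\mu$, which commute and associate freely, so the formal identities are genuine identities in the Banach algebra $BM(G)$. Thus the one obstacle is purely bookkeeping: keeping the inverses in the right order and invoking commutativity, which is available since $G$ is abelian and every factor is a series in $\mu$ alone.
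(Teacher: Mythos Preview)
Your proof is correct and follows essentially the same route as the paper: both substitute $\nu=Q_{t_2}(\mu)$ into $Q_{t_1}(\nu)$, clear the factor $(1-t_2\mu)^{-1}$, and recognize the coefficient $t_1+t_2-t_1t_2=t_1\triangle t_2$ in front of $\mu$. Your version is slightly more explicit about invertibility and the legitimacy of the algebraic cancellations, but the computation is identical.
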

\begin{proof}
Using the properties of the $\Delta$-semigroup we get,
\begin{align*}
&Q_{t_1}(Q_{t_2}(\mu))=(1-t_1)  Q_{t_2}(\mu) *( 1- t_1 Q_{t_2}(\mu))^{-1}
\\&\Rightarrow
Q_{t_1}(Q_{t_2}(\mu))*( 1- t_1 Q_{t_2}(\mu))=(1-t_1) Q_{t_2}(\mu)
\\&\Rightarrow
Q_{t_1}(Q_{t_2}(\mu))*( 1- t_1 ((1-t_2) \mu *( 1- t_2 \mu)^{-1}))=(1-t_1)  (1-t_2) \mu *( 1- t_2 \mu)^{-1}
\\&\Rightarrow
Q_{t_1}(Q_{t_2}(\mu))*( 1- (t_1 + t_2 - t_1  t_2) \mu ) =(1-t_1)  (1-t_2) \mu.
\end{align*}
Thus
\begin{align*}
Q_{t_1}(Q_{t_2}(\mu)) &=(1-t_1)  (1-t_2) \mu *( 1- (1-(1-t_1)  (1-t_2)) \mu )^{-1}  \\
Q_{t_1}(Q_{t_2}(\mu)) &=(1- t_1 \triangle t_2) \mu *( 1- t_1 \triangle t_2 \mu )^{-1}\\
Q_{t_1}(Q_{t_2}(\mu)) &=Q_{t_1 \triangle t_2}(\mu).
\end{align*}
\end{proof}

We remind the reader that the flow $Q_{t}$ is complete if its flow curves exist for all time $t \in [0,1)$ generating a family of homeomorphisms $Q_{t}: \mathcal{P}(G) \to \mathcal{P}(G)$.
\begin{proposition}\label{flow complete} The one parameter family $Q_{t}$  is a complete flow.
\end{proposition}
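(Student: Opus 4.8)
The plan is to verify the two ingredients that the text folds into the phrase ``complete flow'': that for every $\mu$ the orbit $t\mapsto Q_t(\mu)$ is defined on the whole interval $[0,1)$ and obeys the flow law, and that each time-$t$ map $Q_t$ is a homeomorphism onto its image. I would open by noting that the flow law is already in hand: Lemma~\ref{flow property} gives $Q_0=\mathrm{id}$ and $Q_{t_1}\circ Q_{t_2}=Q_{t_1\triangle t_2}$, and Proposition~\ref{Delta properties} --- or the elementary remark that $(1-t_1)(1-t_2)\in(0,1]$ forces $t_1\triangle t_2=1-(1-t_1)(1-t_2)\in[0,1)$ --- shows that $\triangle$ never leaves $[0,1)$. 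Since $Q_t$ is a well defined self-map of $\mathcal{P}(G)$ for each $t\in[0,1)$ via the Liouville series, there is no finite-time blow-up, so the orbit $t\mapsto Q_t(\mu)$ genuinely lives on all of $[0,1)$, with $t=1$ in the role of $+\infty$. That disposes of ``complete'' in the literal sense; the remaining work is the topological regularity of the individual $Q_t$.

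For fixed $t\in[0,1)$ I would recall from Proposition~\ref{rationalmapproperties} that $Q_t$ is injective (part (a)) and Lipschitz for the total variation norm (part (c)), and then observe that $Q_t$ is also weak$^*$-continuous: convolution is jointly weak$^*$-continuous on $\mathcal{P}(G)$ --- for $f\in C^0(G)$ the map $(x,y)\mapsto f(xy)$ is continuous on the compact space $G\times G$, and product measures depend weak$^*$-continuously on their factors, which one obtains from Stone--Weierstrass applied to functions $g(x)h(y)$ --- so each $\mu\mapsto\mu^{\,n+1}$ is weak$^*$-continuous, and the series $Q_t(\mu)=\sum_{n\ge0}(1-t)t^{\,n}\mu^{\,n+1}$ converges uniformly in $\mu$ because its $n$-th term has total variation at most $(1-t)t^{\,n}$ with $\sum_n(1-t)t^{\,n}=1$. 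With $\mathcal{P}(G)$ weak$^*$-compact and Hausdorff, a continuous injection of $\mathcal{P}(G)$ into itself is automatically a homeomorphism onto its image $Q_t(\mathcal{P}(G))$, which is how the definition should be read. I would flag explicitly that, by Proposition~\ref{rationalmapproperties}(b), $Q_t$ is not onto $\mathcal{P}(G)$ for $t>0$, so the target in ``family of homeomorphisms $Q_t:\mathcal{P}(G)\to\mathcal{P}(G)$'' must be understood as the image; for completeness I would also exhibit the inverse branch algebraically, since $\nu=Q_t(\mu)$ yields $\nu*(1-t\mu)=(1-t)\mu$, hence $\big((1-t)1+t\nu\big)*\mu=\nu$, and a one-line computation gives $(1-t)1+t\nu=(1-t)(1-t\mu)^{-1}$, which is invertible, so $\mu=\big((1-t)1+t\nu\big)^{-1}*\nu$ recovers $\mu$ continuously from $\nu$ on $Q_t(\mathcal{P}(G))$.

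Finally I would record the joint continuity of $(t,\mu)\mapsto Q_t(\mu)$ on $[0,1)\times\mathcal{P}(G)$, which falls out of the same series: on each strip $[0,1-\varepsilon]\times\mathcal{P}(G)$ the $n$-th term is bounded in norm by $(1-\varepsilon)^{\,n}$, so the series converges uniformly and its sum is continuous there, for every $\varepsilon>0$. I expect the only genuinely delicate point to be the middle step, namely reconciling the stated codomain $\mathcal{P}(G)$ with the non-surjectivity of Proposition~\ref{rationalmapproperties}(b). The clean resolution is to assert that each $Q_t$ is an embedding of $\mathcal{P}(G)$ into itself, and it is precisely the weak$^*$-compactness of $\mathcal{P}(G)$ (or, alternatively, the explicit inverse branch above) that makes continuity of the inverse come for free, rather than something to be estimated by hand from Lemma~\ref{AA}.
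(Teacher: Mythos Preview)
Your proposal is correct and follows the same two-step skeleton as the paper's proof --- absolute convergence of the series for well-definedness on all of $[0,1)$, and Proposition~\ref{rationalmapproperties} (injectivity plus continuity) for the homeomorphism-onto-image claim --- though you supply considerably more detail. In particular, you make explicit the weak$^*$-continuity and compactness argument that upgrades injectivity to a homeomorphism onto the image, you address the non-surjectivity tension with part~(b), and you add the explicit inverse branch and joint continuity; the paper's own proof is two sentences long and leaves all of this implicit.
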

\begin{proof} For all time $t \in [0,1)$  the integral curves, $t \to Q_{t}(\mu)$ are well defined  because the series is absolutely convergent. From Proposition~\ref{rationalmapproperties} we have that, for each fixed $t \in [0,1)$, $Q_{t}=T$ is injective, so it is a homeomorphism on its image.
\end{proof}

\subsection{Asymptotic behavior of the rational semigroup}
The asymptotic behavior of the rational map is contained in the limit of the rational semigroup when $t \to 1^-$.

\begin{proposition}\label{Iterate Q} The iterates of the rational map are given by
$Q_{t}^{n}(\mu)=Q_{t^{n_{\triangle}}}(\mu),$
in other words,
$Q_{t}^{n}(\mu)=(1-t)^{n} \mu*( 1- \left[1- (1-t)^{n}\right] \mu)^{-1},$
where $\stackrel{n \; \text{times}}{\overbrace{t \triangle \cdots  \triangle t}} = t^{n_{\triangle}}$.
\end{proposition}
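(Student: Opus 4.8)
The plan is to prove this by induction on $n$, using the flow property from Lemma~\ref{flow property} as the engine. The base case $n=1$ is just the definition of $T=Q_{t}$, together with the trivial identity $t^{1_{\triangle}}=t$. For the inductive step, assume $Q_{t}^{n-1}(\mu)=Q_{t^{(n-1)_{\triangle}}}(\mu)$. Then
$$Q_{t}^{n}(\mu)=Q_{t}\bigl(Q_{t}^{n-1}(\mu)\bigr)=Q_{t}\bigl(Q_{t^{(n-1)_{\triangle}}}(\mu)\bigr)=Q_{t\,\triangle\, t^{(n-1)_{\triangle}}}(\mu)=Q_{t^{n_{\triangle}}}(\mu),$$
where the third equality is Lemma~\ref{flow property} and the last is the definition of the iterated $\triangle$-power (this also implicitly uses associativity of $\triangle$ from Proposition~\ref{Delta properties}). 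This establishes the first displayed identity.

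For the explicit formula, I would compute $t^{n_{\triangle}}$ in closed form. From the definition $t_1\triangle t_2 = 1-(1-t_1)(1-t_2)$ we get $1-(t_1\triangle t_2)=(1-t_1)(1-t_2)$, so a one-line induction gives $1-t^{n_{\triangle}}=(1-t)^{n}$, that is, $t^{n_{\triangle}}=1-(1-t)^{n}$. Substituting $s=t^{n_{\triangle}}$ into $Q_{s}(\mu)=(1-s)\mu*(1-s\mu)^{-1}$ and using $1-s=(1-t)^{n}$ and $s=1-(1-t)^{n}$ yields
$$Q_{t}^{n}(\mu)=(1-t)^{n}\,\mu*\bigl(1-\left[1-(1-t)^{n}\right]\mu\bigr)^{-1},$$
as claimed. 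One should note that $t^{n_{\triangle}}=1-(1-t)^{n}\in[0,1)$ for $t\in[0,1)$, so the right-hand side is a well-defined element of $\mathcal{P}(G)$ by the discussion of invertible elements in Section~\ref{CM-algebras}.

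There is essentially no serious obstacle here: the argument is a routine induction once Lemma~\ref{flow property} is available, and the only point requiring a moment's care is recognizing that $\triangle$ linearizes under the substitution $t\mapsto 1-t$, which turns the $n$-fold $\triangle$-power into the ordinary $n$-th power $(1-t)^{n}$. If one wanted to avoid invoking Lemma~\ref{flow property}, an alternative is a direct induction: assuming the formula for $Q_{t}^{n-1}(\mu)$, write $Q_{t}^{n}(\mu)=Q_{t}(Q_{t}^{n-1}(\mu))$, clear denominators exactly as in the proof of Lemma~\ref{flow property}, and collect terms; this reproduces the same algebraic manipulation $1-[t+(1-(1-t)^{n-1})-t(1-(1-t)^{n-1})]=(1-t)^{n}$, but the route through the $\triangle$-semigroup is cleaner.
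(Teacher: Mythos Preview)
Your proof is correct and follows essentially the same approach as the paper: induction on $n$ using Lemma~\ref{flow property}, together with the observation that $1-t^{n_{\triangle}}=(1-t)^{n}$. The only cosmetic differences are that the paper starts the induction at $n=0$ and applies the inductive hypothesis to the outer $n$ iterates (writing $Q_{t}^{n+1}(\mu)=Q_{t^{n_{\triangle}}}(Q_{t}(\mu))$), whereas you start at $n=1$ and peel off the outermost $Q_{t}$; both are equivalent.
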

\begin{proof}
The proof is by induction on $n$.  For $n=0$ is clear that
$$
Q_{t}^{0}(\mu)= \mu= (1-t)^{0} \mu *( 1- \left[1- (1-t)^{0}\right] \mu)^{-1}= Q_{0}(\mu).
$$
For $n+1$,   supposing $Q_{t}^{n}(\mu)= Q_{t^{n_{\triangle}}}(\mu)$ we just apply Lemma~\ref{flow property} and the flow property,
\begin{align*}
Q_{t}^{n+1}(\mu)&=Q_{t^{n_{\triangle}}}(Q_{t}(\mu))
= Q_{t^{n_{\triangle}} \triangle t}(\mu) =Q_{t^{n+1_{\triangle}}}(\mu)
\\&=(1-t)^{n+1} \mu*( 1- \left[1- (1-t)^{n+1}\right] \mu)^{-1}
\end{align*}
\end{proof}

\begin{example} If we compose different times $t_n$ then it can accumulate on other times $t$ less than 1. For example, from Euler's infinite product theorem, we know that
$$\sin(z)=z \prod_{n=1}^{\infty} (1-\frac{z^2}{\pi^2 n^2}), \forall z \in \mathbb{C},$$
and so, taking $z=1$, we get $\displaystyle\lim_{n \to \infty} \prod_{j=1}^{n} (1-\frac{1}{\pi^2 j^2})=\sin(1)$, what means that
\begin{align*}
Q_{\frac{1}{\pi^2 n^2}}\circ Q_{\frac{1}{\pi^2 (n-1)^2}}\circ\cdots\circ Q_{\frac{1}{\pi^2 (1)^2}} &= Q_{\frac{1}{\pi^2 n^2} \triangle \frac{1}{\pi^2 (1)^2}\triangle  \cdots \triangle \frac{1}{\pi^2 (1)^2}}
\\&=
 Q_{1- \prod_{j=1}^{n} (1-\frac{1}{\pi^2 j^2})} \stackrel{n \to \infty}{\to}  Q_{1- \sin(1)}.
\end{align*}
\end{example}

From the previous discussion we know that $Q_{t}^{n}(\mu)=Q_{t^{n_{\triangle}}}(\mu),$ so the long time behavior of every $T(\mu)=Q_{t}(\mu)$, for a fixed $t$, is given by  the long time behavior of the flow $Q_{t}$. We recall that $\omega$-limit the of $\mu$ by $Q_{t}$ is the set of accumulation points of its orbit, and is denoted by
$$
L_{\omega}^{Q}(\mu)= \{\nu   \in\mathcal{P}(G) \;|\; \nu=\lim_{t_{k} \to 1} Q_{t_{k}}(\mu),\mbox{ for some sequence }, t_k\to1\},
$$
and
$$
L_{\omega}^{T}(\mu)= \{\nu   \in\mathcal{P}(G) \;|\; \nu=\lim_{k \to \infty} T^{n_{k}}(\mu)=\lim_{k \to \infty} Q_{t}^{n_{k}}(\mu)\}.
$$

\begin{proposition}\label{omega limite} Let $\nu   \in\mathcal{P}(G)$ be a probability measure then
$$L_{\omega}^{T}(\mu) \subseteq L_{\omega}^{Q}(\mu) \subseteq \{\nu   \in\mathcal{P}(G) \;|\; \nu*\mu=\nu\}.$$
\end{proposition}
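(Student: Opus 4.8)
The plan is to establish the two inclusions separately. The first one, $L_{\omega}^{T}(\mu) \subseteq L_{\omega}^{Q}(\mu)$, is essentially immediate from Proposition~\ref{Iterate Q}: since $T^{n}(\mu) = Q_{t}^{n}(\mu) = Q_{t^{n_{\triangle}}}(\mu)$ and the scalar sequence $t^{n_{\triangle}} = 1-(1-t)^{n}$ converges to $1$ as $n \to \infty$ (for any fixed $t \in (0,1)$; the case $t=0$ being trivial since then both $\omega$-limit sets are $\{\mu\}$), every accumulation point of the discrete orbit $\{T^{n}(\mu)\}$ is realized as a limit $\lim_{k} Q_{t_{k}}(\mu)$ along the sequence $t_{k} = t^{n_{k}{}_{\triangle}} \to 1$. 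So any $\nu \in L_{\omega}^{T}(\mu)$ lies in $L_{\omega}^{Q}(\mu)$.

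For the second inclusion I would take $\nu \in L_{\omega}^{Q}(\mu)$, so $\nu = \lim_{k} Q_{t_{k}}(\mu)$ in the total variation topology for some sequence $t_{k} \to 1^{-}$, and show $\nu * \mu = \nu$. The natural tool is Proposition~\ref{dyn CD eq}: for each $t_{k}$, the measure $Q_{t_{k}}(\mu)$ solves the modified Choquet-Deny equation with $\nu$ replaced by $\mu$, namely
$$(1-t_{k})\,\mu + t_{k}\,\mu * Q_{t_{k}}(\mu) = Q_{t_{k}}(\mu).$$
Now I would pass to the limit $k \to \infty$ in this identity. The left-hand side: $(1-t_{k})\mu \to 0$ since $\|(1-t_{k})\mu\| = 1-t_{k} \to 0$; and $t_{k}\,\mu * Q_{t_{k}}(\mu) \to \mu * \nu$ because convolution is continuous in the total variation norm (it is a bounded bilinear operation, $\|\alpha * \beta\| \le \|\alpha\|\|\beta\|$, so $\|\mu * Q_{t_{k}}(\mu) - \mu * \nu\| \le \|\mu\|\,\|Q_{t_{k}}(\mu) - \nu\| \to 0$) and $t_{k} \to 1$. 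The right-hand side converges to $\nu$ by hypothesis. Hence $\mu * \nu = \nu$, which by commutativity is $\nu * \mu = \nu$, placing $\nu$ in the asserted set of Choquet-Deny solutions.

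The only genuine subtlety — and the step I would flag as the main point to get right — is the mode of convergence. The set $\mathcal{P}(G)$ is weak* compact but in general only a closed (not compact) subset in total variation, so a priori the "$\lim$" in the definition of $L_{\omega}^{Q}(\mu)$ should be read in whichever topology the paper intends; the argument above works cleanly if the convergence $Q_{t_{k}}(\mu) \to \nu$ is in total variation, which is the topology in which the Liouville series defining $Q_{t}$ converges and in which convolution is jointly continuous. If instead one only has weak* convergence, one replaces the norm estimate by testing against $f \in C^{0}(G)$: write $\int f \, d(\mu * Q_{t_{k}}(\mu)) = \int\!\int f(xy)\, d\mu(x)\, dQ_{t_{k}}(\mu)(y)$, note that $x \mapsto \int f(xy)\,dQ_{t_{k}}(\mu)(y)$ — or rather the function $y \mapsto \int f(xy)\,d\mu(x)$ — is continuous, and apply weak* convergence of $Q_{t_{k}}(\mu)$ to conclude $\int f\,d(\mu * Q_{t_{k}}(\mu)) \to \int f\, d(\mu * \nu)$; the term $(1-t_{k})\int f\,d\mu \to 0$ trivially. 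Either way the limiting identity $\nu * \mu = \nu$ follows, and the chain of inclusions is complete.
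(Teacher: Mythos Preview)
Your proof is correct and follows essentially the same route as the paper: both establish the first inclusion via $Q_t^n(\mu)=Q_{t^{n_\triangle}}(\mu)$ with $t^{n_\triangle}\to 1$, and both obtain the second inclusion by writing the defining identity of $\nu_k=Q_{t_k}(\mu)$ (the paper as $\nu_k*(1-t_k\mu)=(1-t_k)\mu$, you as the equivalent rearrangement $(1-t_k)\mu+t_k\,\mu*\nu_k=\nu_k$) and passing to the limit $t_k\to 1^-$. Your added discussion of the mode of convergence (total variation versus weak*) is more careful than the paper, which simply writes ``taking the limit'' without comment.
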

\begin{proof}  The inclusion $L_{\omega}^{T}(\mu) \subseteq L_{\omega}^{Q}(\mu)$ is obvious because $Q_{t}^{n}(\mu)=Q_{t^{n_{\triangle}}}(\mu),$ and $t^{(n_{k})_{\triangle}} \to 1$ when $k \to \infty$.
Let $\nu \in L_{\omega}^{Q}(\mu)$, so  $\nu=\lim_{k \to \infty} \nu_{k}$ where  $\nu_{k} = Q_{t_{k}}(\mu)$.
Then
\begin{align*}
\nu_{k} = Q_{t_{k}}(\mu) = (1-t_{k}) \mu*( 1- t_{k} \mu)^{-1}
&\Rightarrow\nu_{k} *( 1- t_{k} \mu) = (1- t_{k} ) \mu
\\&\Rightarrow
\nu * ( 1-  \mu) = 0.
\end{align*}
Taking the limit above we get $\nu*\mu=\nu,$  that is, if $\nu \in L_{\omega}^{Q}(\mu)$ then $\nu$ \textbf{is solution of the Choquet-Deny equation $\nu*\mu=\nu$ for a fixed $\mu$} (this is the classical sense).
\end{proof}

From the Proposition~\ref{omega limite} we know that the $\omega$-limit is separated by the solution of the Choquet-Deny equation, so we can define the Choquet-Deny kernel
$$\ker (\nu_{0}) = \{\mu   \in\mathcal{P}(G) \;|\; \nu_{0}*(1-\mu)= 0\},$$
and the co-kernel
$$\ker^{c} (\mu_{0}) = \{\nu   \in\mathcal{P}(G) \;|\; \nu*(1-\mu_{0})= 0\}.$$
From the previous results we know that
$L_{\omega}^{T}(\mu) \subseteq L_{\omega}^{Q}(\mu) \subseteq \ker^{c} (\mu)\subseteq \ker^{c} (\mu^n), \; \forall n$ and $\ker (\nu_{0})$ is the set of probabilities $\mu$ that can share the same $L_{\omega}^{Q}(\mu)$.

\begin{remark}\label{rmk:S^1 gtoup}
Let $G=\mathbb{S}^1$ and $Leb$ the Lebesgue measure. Then, for any $\mu\in \mathcal{P}(\mathbb{S}^1)$,
$Leb\ast\mu=\mu\ast Leb  =Leb$. In particular $Leb \in Ker^{c}(\mu), \; \forall \mu$. Indeed, $$
 \mu\ast Leb(E)=\int_{\mathbb{S}^1}Leb(g^{-1}E)d\mu(g)=\int_{\mathbb{S}^1}Leb(E)d\mu(g)=Leb(E).
$$
\end{remark}

In \cite{Zeng}, Theorem 2, they prove that $\mu$ satisfy $\mu * \nu = \mu$ if, and only if, $\mu * \delta_{g} = \mu$, for all $g \in S(\nu)$, where $S(\nu)$ is the closed group generated by the support of $\nu$.  Ii is also easy to prove that, if $G$ is a topological metric space (in particular a finite group) a probability $\eta$ is a Haar measure if and only if  $\eta^2=\eta$ and $\eta$ is positive in open sets. Moreover, a probability $\lambda$ is a Haar measure if and only if  $\eta*\lambda=\lambda$ and $\eta$ is positive in open sets.

\begin{lemma}\label{choquet-deny is haar}
$\mu\ast\lambda=\lambda$ with $\mu$ such that $\mu(U)>0$ for any open set of G,
if and only if $\lambda(g^{-1}A)=\lambda(A)$ for any $g\in G$, i.e.,
$\lambda $ is a left invariant Haar measure. In particular, $\mu$ is positive in open sets then $\ker^{c}(\mu)=\{\lambda\}$.
\end{lemma}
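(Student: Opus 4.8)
The plan is to prove the two implications separately and then read off the "in particular" claim. For the forward direction, suppose $\mu\ast\lambda=\lambda$ with $\mu$ positive on every open set. I would invoke the cited result from \cite{Zeng} (Theorem 2): $\mu\ast\lambda=\lambda$ if and only if $\mu\ast\delta_g=\lambda$ for all $g$ in $S(\lambda)$, the closed subgroup generated by $\supp\lambda$ — wait, one must be careful here; the statement in the excerpt is phrased with the roles of the two measures in a particular order, so I would first restate it cleanly in the form actually needed, namely that $\mu\ast\lambda=\lambda$ forces $\delta_g\ast\lambda=\lambda$ for every $g\in S(\mu)$, the closed group generated by $\supp\mu$. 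Since $\mu$ is positive on all open sets, $\supp\mu=G$, hence $S(\mu)=G$, and therefore $\delta_g\ast\lambda=\lambda$ for every $g\in G$. Unwinding the definition of convolution, $\delta_g\ast\lambda(A)=\lambda(g^{-1}A)$, so this says exactly $\lambda(g^{-1}A)=\lambda(A)$ for all $g\in G$ and all Borel $A$, i.e. $\lambda$ is a left-invariant Haar measure.

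For the converse, suppose $\lambda$ is left-invariant Haar, so $\lambda(g^{-1}A)=\lambda(A)$ for all $g,A$. Then for any $\mu\in\mathcal P(G)$ (positivity on open sets is not even needed here),
\begin{align*}
\mu\ast\lambda(A)&=\int_G \lambda(g^{-1}A)\,d\mu(g)=\int_G \lambda(A)\,d\mu(g)=\lambda(A),
\end{align*}
exactly as in Remark~\ref{rmk:S^1 gtoup}. Hence $\mu\ast\lambda=\lambda$. Combining the two directions gives the stated equivalence.

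Finally, for the "in particular" statement: if $\mu$ is positive on open sets, then by the equivalence just proved, any $\lambda\in\ker^{c}(\mu)$ — i.e. any $\lambda$ with $\lambda\ast(1-\mu)=0$, equivalently $\lambda\ast\mu=\lambda$; since $G$ is abelian, $\mu\ast\lambda=\lambda\ast\mu=\lambda$ — must be a left-invariant Haar measure on $G$. By uniqueness of Haar measure on a compact group, there is exactly one such probability, so $\ker^{c}(\mu)=\{\lambda\}$ is a singleton. I expect the main obstacle to be purely bookkeeping: making sure the asymmetric phrasing of the cited \cite{Zeng} result is applied with the correct measure in the correct slot, and confirming that commutativity of $G$ lets one pass freely between $\mu\ast\lambda$ and $\lambda\ast\mu$ so that the co-kernel condition really does match the hypothesis of the forward implication. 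The analytic content (the interchange of integration in the converse, and $\supp\mu=G\Rightarrow S(\mu)=G$) is routine.
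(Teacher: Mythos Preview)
Your proposal is correct, and the converse and the ``in particular'' clause are handled exactly as one would want. However, the forward implication takes a genuinely different route from the paper. You invoke the cited result of Sz\'ekely--Zeng as a black box: from $\lambda\ast\mu=\lambda$ deduce $\lambda\ast\delta_g=\lambda$ for all $g\in S(\mu)=G$. The paper instead gives a self-contained Choquet--Deny style maximum-principle argument: for each bounded continuous $u$ on $G$ it sets $v(g)=\int_G u(gh)\,d\lambda(h)$, computes that $\int_G v(ag)\,d\mu(g)=v(a)$ for every $a$ (this is where $\mu\ast\lambda=\lambda$ enters), chooses $a$ to be a point where $v$ attains its maximum, and concludes $v(ag)=v(a)$ for all $g\in\supp\mu=G$, so $v$ is constant; since $u$ was arbitrary this yields $\delta_a\ast\lambda=\lambda$ for all $a$. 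Your approach is shorter and perfectly legitimate given that the Zeng result is already cited in the surrounding text, but it outsources precisely the analytic core that the paper chooses to exhibit directly; the paper's argument has the advantage of being independent of the external reference and of making transparent exactly where compactness (existence of a maximum) and full support of $\mu$ are used.
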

\begin{proof}
For any bounded continuous function $u$ on $G$
$$
v(g)=\int_G u(gh)d\lambda(h)
$$
Then for any $a\in G$
 \begin{align*}
 \int_G v(ag)d\mu(g)&=\int_G\int_G u(agh)d\mu(g)d\lambda(h)
 \\& =\int_G u(ak)d(\mu\ast\lambda)(k)
 \\& =\int_G u(ak)d\lambda(k) = v(a)
\end{align*}
We can take $a$ as being the element where the maximum of $v$ is attained. Then $v(ag)=v(a)$ for all $g$ in the support of $\mu$.
 In particular $v(ag)$ and therefore $v$ is a constant. Hence, $\delta_{a}\ast\lambda = \lambda$ or $\lambda$ is left invariant.
\end{proof}

Here we show an important result which tell us how
the Haar measure is in a general group.
\begin{theorem}\label{equiv haar idemp}
The following are equivalent\\
1. $\lambda$ is a left invariant Haar measure on G.\\
2. $\lambda$ is a right invariant Haar measure on G.\\
3. $\lambda$ is an idempotent i.e $\lambda\ast\lambda=\lambda$ and $\lambda(U) > 0$ for every open set $U$.
\end{theorem}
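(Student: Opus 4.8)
The plan is to prove the equivalences by showing $(1)\Leftrightarrow(2)$ and $(1)\Leftrightarrow(3)$; the group being abelian makes $(1)\Leftrightarrow(2)$ essentially trivial, so the real content is the equivalence with idempotency. First I would dispose of $(1)\Leftrightarrow(2)$: on an abelian group left invariance $\lambda(g^{-1}A)=\lambda(A)$ and right invariance $\lambda(Ag^{-1})=\lambda(A)$ are literally the same statement since $gA=Ag$, so there is nothing to do beyond remarking this. (If one wants the argument to survive into the non-abelian setting, the uniqueness of Haar measure gives that the left Haar measure equals the right Haar measure up to the modular function, which is trivial on compact groups; but for the present paper the commutativity suffices.)

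Next, $(1)\Rightarrow(3)$. Suppose $\lambda$ is a left-invariant Haar measure. Positivity on open sets is part of the standard characterization of Haar measure on a compact group (the Haar measure of a nonempty open set is strictly positive, since finitely many translates of it cover $G$), so I would just cite that. For idempotency, compute for $f\in C^0(G)$:
\[
\int f\, d(\lambda*\lambda)=\int\!\!\int f(xy)\,d\lambda(x)\,d\lambda(y)=\int\left(\int f(xy)\,d\lambda(x)\right)d\lambda(y),
\]
and by left invariance the inner integral equals $\int f(x)\,d\lambda(x)$ for every fixed $y$, hence the whole expression equals $\int f\,d\lambda$. Thus $\lambda*\lambda=\lambda$.

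Finally, $(3)\Rightarrow(1)$, which is the main obstacle. Given $\lambda*\lambda=\lambda$ with $\lambda$ positive on open sets, I want to conclude $\lambda$ is left invariant. The cleanest route is to invoke Lemma~\ref{choquet-deny is haar} with $\mu=\lambda$: since $\lambda$ is positive on open sets and $\lambda*\lambda=\lambda$, that lemma gives directly that $\lambda$ is a left invariant Haar measure. (One should note that Lemma~\ref{choquet-deny is haar} is stated for $\mu*\lambda=\lambda$ with $\mu$ positive on open sets, and here we take the special case $\mu=\lambda$, which is legitimate precisely because hypothesis (3) supplies both the idempotency and the positivity.) Alternatively, one can run the maximum-principle argument of that lemma inline: for bounded continuous $u$ set $v(g)=\int u(gh)\,d\lambda(h)$; idempotency gives $\int v(ag)\,d\lambda(g)=\int u(ak)\,d(\lambda*\lambda)(k)=v(a)$ for all $a$, and evaluating at a maximizer $a$ of the continuous function $v$ on the compact group $G$ forces $v(ag)=v(a)$ for all $g\in\supp\lambda$; since $\supp\lambda=G$ by positivity on open sets, $v$ is constant, which says $\delta_a*\lambda=\lambda$ for every $a$, i.e. $\lambda$ is left invariant. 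The only point requiring care is the passage from "$v$ constant" to left invariance: one tests against all $u\in C^0(G)$ and uses that $C^0(G)$ separates measures, so $v(ag)\equiv v(e)$ for all $u$ yields $\delta_{a}*\lambda=\lambda$ as measures. Assembling $(1)\Rightarrow(3)\Rightarrow(1)$ and $(1)\Leftrightarrow(2)$ completes the proof.
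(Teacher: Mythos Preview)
Your proposal is correct and follows essentially the same approach as the paper: both arguments obtain idempotency from invariance by integrating the translation identity, get positivity on open sets from the finite-cover-by-translates compactness trick, and deduce invariance from idempotency via Lemma~\ref{choquet-deny is haar} (the paper's terse ``that implies that $\lambda$ is left invariant as well'' is precisely this appeal). Your write-up is simply more explicit than the paper's very compressed version; in particular you spell out the $(1)\Leftrightarrow(2)$ step and the inline maximum-principle argument, which the paper leaves implicit.
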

\begin{proof}
If $\lambda$ is right invariant then $\lambda\ast\delta_g=\lambda$ for all $g\in G$ and by integrating $\lambda\ast\lambda=\lambda$ and that implies that $\lambda$ is left invariant as well. We note that any left or right invariant measure cannot give zero mass to any open set because, by compactness, $G$ can be covered by a finite number of translates of $U$.
\end{proof}

\begin{theorem}\label{haar} If $G$ is a topological metric  group (for example, any finite group) and $\mu^n$ is positive in open sets of the subgroup generated by its support (for example if $\mu$ is acyclic, see Definition~\ref{Acyclic})  then
$$L_{\omega}^{Q}(\mu)=\ker^{c}(\mu) \cap S(\mu) =\{\nu\},$$
where $\nu^2 =\nu$ is the Haar probability of the subgroup $S(\mu)$. In other words, the only $\omega$-limit points for such probabilities are the fixed points of $Q_{t}$.
\end{theorem}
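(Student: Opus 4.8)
The plan is to chain together the structural facts already established. By Proposition~\ref{omega limite}, $L_{\omega}^{Q}(\mu)\subseteq \ker^{c}(\mu)$; by Lemma~\ref{invar of supp} and the semigroup identity $Q_t(\mu)=(1-t)\mu+(1-t)t\mu^2+\cdots$, every term $\mu^k$ has support inside the closed subgroup $S(\mu)$ generated by $\supp\mu$, so $Q_t(\mu)$ is supported in $S(\mu)$ for all $t$, and therefore so is every accumulation point. Hence $L_{\omega}^{Q}(\mu)\subseteq \ker^{c}(\mu)\cap S(\mu)$, where I abuse notation to mean probabilities carried by $S(\mu)$. The first step, then, is to restrict the whole picture to the compact metric group $H:=S(\mu)$, on which $\mu$ (or rather $\mu^n$) is, by hypothesis, positive on open sets.

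The second step is to identify $\ker^{c}(\mu)\cap S(\mu)$. A measure $\nu\in\ker^{c}(\mu)$ means $\nu*\mu=\nu$; I want to upgrade this to $\nu*\mu^n=\nu$ for all $n$ (immediate by iterating), pick $n$ with $\mu^n$ positive on open sets of $H$, and then invoke Lemma~\ref{choquet-deny is haar}: if $\nu*\lambda=\lambda$ — wait, the roles are reversed, so I actually apply the ``moreover'' clause noted after Lemma~\ref{invar of supp}'s discussion, namely that $\nu$ solving $\eta*\nu=\nu$ with $\eta$ positive on open sets forces $\nu$ to be the Haar measure of $H$; equivalently, combine Lemma~\ref{choquet-deny is haar} (which handles $\mu*\lambda=\lambda$) with commutativity of $G$ to swap the factors. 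Either way, the conclusion is that $\ker^{c}(\mu)\cap S(\mu)=\{\lambda_H\}$, the unique Haar probability of $H$, which by Theorem~\ref{equiv haar idemp} is the unique idempotent of $\mathcal{P}(H)$ positive on open sets, so $\lambda_H^2=\lambda_H$.

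The third step is to show $L_{\omega}^{Q}(\mu)$ is nonempty, so that the inclusion $L_{\omega}^{Q}(\mu)\subseteq\{\lambda_H\}$ becomes an equality. This is where I expect the only real subtlety: $\mathcal{P}(G)$ is compact in the weak* topology but the flow $Q_t$ converges in total variation, and these topologies differ. However, nonemptiness of the $\omega$-limit only needs weak* compactness of $\mathcal{P}(G)$: the net $(Q_t(\mu))_{t\to1^-}$ has a weak* accumulation point, and by Proposition~\ref{omega limite} (whose proof passes to the weak* limit, since $\nu*(1-\mu)=0$ is a weak* closed condition) that point lies in $\ker^{c}(\mu)\cap S(\mu)=\{\lambda_H\}$. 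Thus $\lambda_H\in L_{\omega}^{Q}(\mu)$, and since the $\omega$-limit is contained in the singleton $\{\lambda_H\}$, we get $L_{\omega}^{Q}(\mu)=\{\lambda_H\}$ with $\lambda_H$ a fixed point of $Q_t$ (indeed $Q_t(\lambda_H)=(1-t)\lambda_H*(1-t\lambda_H)^{-1}=(1-t)\lambda_H*\sum_k t^k\lambda_H^k=(1-t)\sum_k t^k\lambda_H=\lambda_H$ using $\lambda_H^k=\lambda_H$). The main obstacle is thus bookkeeping the two topologies carefully; everything else is assembling earlier lemmas.
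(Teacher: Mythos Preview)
Your argument is correct and follows essentially the same route as the paper: pass from $\ker^{c}(\mu)$ to $\ker^{c}(\mu^n)$ (you do this by iterating $\nu*\mu=\nu$, the paper by the inclusion $\ker^{c}(\mu)\subseteq\ker^{c}(\mu^n)$), then apply Lemma~\ref{choquet-deny is haar} on $S(\mu)$ and Theorem~\ref{equiv haar idemp}. Your treatment is in fact more careful than the paper's, which does not explicitly address the support restriction to $S(\mu)$, the nonemptiness of $L_{\omega}^{Q}(\mu)$, or the weak* versus total variation issue.
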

\begin{proof} We know that
$L_{\omega}^{Q}(\mu) \subseteq \ker^{c} (\mu)\subseteq \ker^{c} (\mu^n), \; \forall n \in \mathbb{N}.$
From the hypothesis, we have $\mu^n$ positive in open sets for the induced topology of $G'=S(\mu)$ then from Lemma~\ref{choquet-deny is haar} $\ker^{c}(\mu^n)= \{\nu\}$ the Haar measure of $G'$  and by Theorem~\ref{equiv haar idemp} we conclude that $\nu^2=\nu$.
\end{proof}

\subsection{Invariant sets}
The first step is to determine the invariant sets  of $ T= Q_{t}$.

\begin{theorem}\label{invariantsetsubgroup}
If $H$ is a closed subgroup of $G$ and
$\mathbb{H}=\{ \mu \; |\; \supp \mu \subseteq H\}$
then $ Q_{t}(\mathbb{H}) \subseteq \mathbb{H}$.
\end{theorem}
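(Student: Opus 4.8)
The plan is to show that if $\supp\mu\subseteq H$, then every term of the Liouville series defining $Q_t(\mu)$ is supported in $H$, and then pass to the (weak*) limit. First I would recall that, by definition,
$$
Q_t(\mu)=(1-t)\mu*(1-t\mu)^{-1}=(1-t)\sum_{n=1}^{\infty} t^{n-1}\mu^{n},
$$
so $Q_t(\mu)$ is a convex combination (with weights $(1-t)t^{n-1}$, which sum to $1$) of the convolution powers $\mu^{n}$, $n\ge 1$. Since $\supp\mu\subseteq H$, an immediate induction using Lemma~\ref{invar of supp} gives $\supp\mu^{n}\subseteq H$ for every $n\ge 1$: indeed $\supp\mu^{1}\subseteq H$, and if $\supp\mu^{n}\subseteq H$ then $\supp\mu^{n+1}=\supp(\mu^{n}*\mu)\subseteq H$ because both factors are supported in the closed subgroup $H$.

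Next I would argue that a convex combination of probabilities supported in the closed set $H$ is again supported in $H$, and that this is preserved under weak* limits. For the finite partial sums $S_N=(1-t)\sum_{n=1}^{N}t^{n-1}\mu^{n}$ (renormalized, or just noting $S_N(G^{c})$ considerations) one has $S_N(G\setminus H)=0$ since each $\mu^{n}(G\setminus H)=0$; hence the limit measure $Q_t(\mu)$ also satisfies $Q_t(\mu)(G\setminus H)=0$, because $G\setminus H$ is open (as $H$ is closed) and weak* convergence gives $\liminf$-type control, or more directly because $\|S_N-Q_t(\mu)\|_{TV}\to 0$ so $S_N(A)\to Q_t(\mu)(A)$ for every Borel set $A$. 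Therefore $\supp Q_t(\mu)\subseteq H$, i.e. $Q_t(\mu)\in\mathbb{H}$, which is exactly $Q_t(\mathbb{H})\subseteq\mathbb{H}$.

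The only mildly delicate point is the passage to the limit for the support: one must be careful that "support contained in $H$" really is closed under the relevant convergence. Here it is painless because the Liouville series converges in total variation norm (the series $\sum (1-t)t^{n-1}=1<\infty$ with $\|\mu^{n}\|=1$), so $S_N\to Q_t(\mu)$ in $\|\cdot\|_{TV}$ and hence setwise; thus $Q_t(\mu)(G\setminus H)=\lim_N S_N(G\setminus H)=0$. Alternatively, one can invoke Lemma~\ref{invar of supp} in the form $\supp(\mu*\nu)=\overline{\supp\mu\cdot\supp\nu}$ to see directly that each $\supp\mu^n\subseteq H$ and then use that $\mathbb{H}$ is a $\|\cdot\|_{TV}$-closed (indeed weak*-closed, since $H$ is closed) convex subset of $\mathcal{P}(G)$ containing all $\mu^n$, hence containing $Q_t(\mu)$.
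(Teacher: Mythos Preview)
Your proof is correct and follows essentially the same approach as the paper: expand $Q_t(\mu)$ as the series $(1-t)\sum_{n\ge 1}t^{n-1}\mu^n$, use Lemma~\ref{invar of supp} inductively to get $\mu^n\in\mathbb{H}$ for all $n$, and conclude $Q_t(\mu)\in\mathbb{H}$. You are simply more explicit than the paper about why the limit stays in $\mathbb{H}$ (via total variation convergence and closedness of $\mathbb{H}$), a point the paper leaves implicit.
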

\begin{proof}Indeed, from Lemma~\ref{invar of supp} we have,
$$Q_{t}(\mu)=  (1-t) \mu + (1-t) t  \mu^2 + ... \in  \mathbb{H},$$
because $\mu, \mu^2, \mu^3, ... \in \mathbb{H}$ if $\mu \in \mathbb{H}$.
\end{proof}

\begin{lemma}\label{kernel and co-kernel invariance} For each $\nu_0, \mu_0 \in \mathcal{P}(G)$, the sets $\ker (\nu_{0})$ and $\ker^{c} (\mu_{0})$ are invariant sets of $Q_{t}$.
\end{lemma}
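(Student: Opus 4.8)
The plan is to show directly that for every fixed $t\in[0,1)$ the map $Q_{t}$ sends each of the two sets into itself; since $\ker(\nu_0)$ and $\ker^{c}(\mu_0)$ do not depend on $t$, this gives invariance under the whole rational semigroup. The only tool needed is the Liouville/geometric expansion $Q_{t}(\mu)=(1-t)\sum_{n\ge 0}t^{n}\mu^{n+1}$ from the definition of the rational map, together with the commutativity of the convolution on the abelian group $G$.

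First I would treat $\ker(\nu_{0})$. By definition $\mu\in\ker(\nu_{0})$ means $\nu_{0}*(1-\mu)=0$, i.e. $\nu_{0}*\mu=\nu_{0}$. The elementary point is that this propagates to all powers: by induction $\nu_{0}*\mu^{n}=\nu_{0}$ for every $n\ge 1$, since $\nu_{0}*\mu^{n+1}=(\nu_{0}*\mu^{n})*\mu=\nu_{0}*\mu=\nu_{0}$. Because convolution by the fixed measure $\nu_{0}$ is $1$-Lipschitz for the total variation norm (the CM-algebra is a Banach algebra) and the series defining $Q_{t}(\mu)$ converges absolutely in total variation ($\sum_{n\ge 0}(1-t)t^{n}=1$), we may convolve term by term:
\[
\nu_{0}*Q_{t}(\mu)=(1-t)\sum_{n\ge 0}t^{n}\bigl(\nu_{0}*\mu^{n+1}\bigr)=(1-t)\sum_{n\ge 0}t^{n}\,\nu_{0}=\nu_{0},
\]
so $Q_{t}(\mu)\in\ker(\nu_{0})$. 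For $\ker^{c}(\mu_{0})$ the argument is symmetric: $\nu\in\ker^{c}(\mu_{0})$ means $\nu*\mu_{0}=\nu$, hence $\nu^{n+1}*\mu_{0}=\nu^{n}*(\nu*\mu_{0})=\nu^{n+1}$ for all $n\ge 0$, and convolving the series for $Q_{t}(\nu)$ on the right by $\mu_{0}$ gives $Q_{t}(\nu)*\mu_{0}=(1-t)\sum_{n\ge 0}t^{n}\nu^{n+1}=Q_{t}(\nu)$, i.e. $Q_{t}(\nu)\in\ker^{c}(\mu_{0})$.

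Alternatively, one can bypass the series and use the closed form $Q_{t}(\mu)=(1-t)\mu*(1-t\mu)^{-1}$: from $\nu_{0}*\mu=\nu_{0}$ one gets $\nu_{0}*(1-t\mu)=(1-t)\nu_{0}$, hence $\nu_{0}*(1-t\mu)^{-1}=(1-t)^{-1}\nu_{0}$, and therefore $\nu_{0}*Q_{t}(\mu)=(1-t)\,\nu_{0}*(1-t\mu)^{-1}*\mu=\nu_{0}*\mu=\nu_{0}$, using commutativity throughout (and likewise on the right for $\ker^{c}$). There is no genuine obstacle here; the only step that deserves a line of justification is the term-by-term convolution, which is immediate from the continuity of convolution by a fixed measure and the absolute convergence of the defining series.
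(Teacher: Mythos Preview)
Your proof is correct. The paper's own argument is essentially your ``alternative'' route via the closed form: it writes $Q_t(\nu)*(1-t\nu)=(1-t)\nu$, convolves both sides by $\mu_0$, and uses $\nu*\mu_0=\nu$ to reduce back to $Q_t(\nu)*\mu_0=Q_t(\nu)$ (and says the $\ker(\nu_0)$ case is analogous). Your primary argument through the Liouville series $Q_t(\mu)=(1-t)\sum_{n\ge 0}t^n\mu^{n+1}$ is a mild variant: instead of inverting $(1-t\mu)$ algebraically you propagate the relation $\nu_0*\mu=\nu_0$ to all powers and sum term by term, which is arguably more transparent and makes the role of absolute convergence explicit. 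Either way the content is the same one-line computation, and you have correctly noted the only point needing justification (continuity of convolution by a fixed measure for the termwise step).
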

\begin{proof}
It is easy to see that $\ker^{c} (\mu_{0})$ is an invariant subset by the rational semigroup $Q_{t}$. Indeed, take $\nu \in \ker^{c} (\mu_{0})$ then $Q_{t}(\nu)= (1-t)*\nu * (1- t \nu)^{-1}$ and it implies,
\begin{align*}
Q_{t}(\nu)* (1- t \nu)&= (1-t)*\nu
\Rightarrow Q_{t}(\nu)* (1- t \nu)*\mu_{0}= (1-t)*\nu*\mu_{0}
\\&\Rightarrow Q_{t}(\nu)* (\mu_{0}- t \nu*\mu_{0})= (1-t)*\nu*\mu_{0}
\\&\Rightarrow Q_{t}(\nu)* (\mu_{0}- t \nu)= (1-t)*\nu
\Rightarrow Q_{t}(\nu)*\mu_{0} =  Q_{t}(\nu),
\end{align*}
that is  $Q_{t}(\nu) \in \ker^{c}(\mu_{0})$. Analogously one can show that $Q_{t}(\ker  (\nu_{0})) \subseteq \ker  (\nu_{0})$.
\end{proof}

\begin{remark} Also, the set of fixed points  $Fix(Q_{t})=\{ \mu |  Q_{t}(\mu)=\mu\}$  is trivially invariant by the rational semigroup  $Q_{t}$.
The fixed points of the rational map are given by
\begin{align*}
T(\mu)&=Q_{t}(\mu)= \mu
\Rightarrow\mu= (1-t) \mu * (1- t \mu)^{-1}
\\&\Rightarrow\mu * (1- t \mu)= (1-t) \mu
\Rightarrow\mu^2=  \mu
\end{align*}
That is, $Fix(Q_{t})=\{\mu\,|\, \mu^2=  \mu\}$ is the set of idempotent measures. In \cite{Rudin}, this set, that is, measures such that $\mu^2 =\mu$ is characterized by combination of the Haar measures on compact subgroups, if $G$ is a topological locally compact group that is not discrete. For discrete groups the only solution is the counting measure but it is not a probability. Let us now consider what happens in a non-compact discrete group. We analyze only
the case of $\mathbb{Z}$ in order to illustrate what can happen.

For a given probability $\mu \in \mathcal{P}(\mathbb{Z})$
we can associate a function $f_{\mu} \colon [0, 2\pi] \to \mathbb{C}$
as follows
$$
   f_{\mu}(x) = \sum_{k \in \mathbb{Z}}  \mu(k) e^{ikx}.
$$

This function is continuous (use Weierstrass criteria); hence the Fourier
series converges at each point and in the compact set $[0, 2\pi]$ and the
convergence is uniform.

It is easy to see that $f_{\mu}(0) = \sum_{k \in \mathbb{Z}} \mu(k) = 1$
and, if the derivative of $f$ is defined, that $f'(0)$  corresponds to the mean value of the probability
$\mu$.
It is also true, and not hard to verify, that
$$
   f_{\mu}(x) f_{\nu}(x) = \sum_{k \in \mathbb{Z}}  (\mu*\nu)(k) e^{ikx}  = f_{\mu*\nu}(x)
$$

 Now, if we want to find a probability that satisfies the equation
 $\mu*\mu= \mu$ we can just convert the problem to the functional
 setting and we get the equation
$$
   f_{\mu(x)} = (f_{\mu}(x))^2
$$
and so, for any $x$ we have $f_{\mu}(x) = 0$ or $1$. In particular,
the constant function $f=1$ is a solution corresponding
to the probability measure $\delta_0$. We remark that this is the
only continuous solution of this equation that corresponds
to a probability (the other one being the constant zero function,
that corresponds to the null measure), showing that the
only probability $\mu$ in $\mathbb{Z}$ such that $\mu*\mu = \mu$ is the
Dirac measure $\delta_0$.
\end{remark}

\subsection{Differential properties: infinitesimal generator and hyperbolicity}

In this section we will differentiate the flow, with respect to $t$ and $\mu$ both under suitable hypothesis. The derivative in a commutative Banach algebra is defined in \cite{Blum}.
Let $z_0$ be a point in a commutative Banach algebra $B$ and let $N(z_0)$ be a neighborhood of $z_0$. Let $f(z)$ be a function defined on $N(z_0)$ with range in $B$. $f(z)$ is ``differentiable" at $z_0$ if there is an element, $f'(z_0)$, in $B$ with the property that for any real $\varepsilon >0$ there is a $\delta >0$ such that for all $z$ in $B$ with $\|z-z_0\| <\delta$
$\|f(z) - f(zo) - (z - z_0)f'(z_0) \|< \varepsilon \|z - z_0\|.$ The element $f'(z_0)$ is called the ``derivative" of $f(z)$ at $z_0$.

\begin{proposition}\label{AB} The derivative of $Q_{t}$ with respect to $\mu$ is
  $$Q_{t }'(\nu) =d_{\nu}Q_{t } =  (1-t)( 1- t \nu)^{-2} \in BM(G).$$
\end{proposition}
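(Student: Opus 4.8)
The plan is to compute the derivative directly from the definition of differentiability in a commutative Banach algebra, using Lemma~\ref{AA} as the main algebraic engine. Fix $\nu \in \mathcal{P}(G)$ and let $\mu = \nu + h$ with $h \in BM(G)$ small in total variation. By Lemma~\ref{AA},
$$
Q_{t}(\mu) - Q_{t}(\nu) = (1-t)\,(1-t\mu)^{-1}(1-t\nu)^{-1}(\mu-\nu) = (1-t)\,(1-t\nu-th)^{-1}(1-t\nu)^{-1} h.
$$
The candidate derivative is the bounded linear map $h \mapsto (1-t)(1-t\nu)^{-2} h$, i.e. the algebra element $(1-t)(1-t\nu)^{-2}$. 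So the task reduces to showing that
$$
\big\| Q_{t}(\mu) - Q_{t}(\nu) - (1-t)(1-t\nu)^{-2} h \big\| = o(\|h\|).
$$

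The key step is to expand $(1-t\nu-th)^{-1}$ around $(1-t\nu)^{-1}$. Writing $(1-t\nu-th)^{-1} = (1-t\nu)^{-1}\big(1 - th(1-t\nu)^{-1}\big)^{-1}$ and using the Liouville/Neumann series — valid because $\|th(1-t\nu)^{-1}\| \le \frac{t\|h\|}{1-t} < 1$ once $\|h\|$ is small — we get
$$
(1-t\nu-th)^{-1} = (1-t\nu)^{-1} + t(1-t\nu)^{-2} h + R,
$$
where the remainder $R = (1-t\nu)^{-1}\sum_{k\ge 2}\big(th(1-t\nu)^{-1}\big)^{k}$ satisfies $\|R\| \le \frac{1}{1-t}\cdot \frac{(t\|h\|/(1-t))^2}{1 - t\|h\|/(1-t)} = O(\|h\|^2)$. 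Substituting this into the expression for $Q_{t}(\mu)-Q_{t}(\nu)$ above, multiplying out, and collecting terms, the zeroth-order-in-$h$ contribution vanishes (it is $(1-t)(1-t\nu)^{-1}(1-t\nu)^{-1}$ times nothing, since the whole expression already carries one factor of $h$), the first-order term is exactly $(1-t)(1-t\nu)^{-2} h$, and everything else is $O(\|h\|^2) = o(\|h\|)$. This verifies the definition of the derivative with $f'(\nu) = (1-t)(1-t\nu)^{-2}$, and since this element lies in $BM(G)$ (it is a convergent Liouville-type series), the statement follows.

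I expect the only mild obstacle to be bookkeeping: one must be careful that all the inverses involved genuinely exist — $(1-t\nu)^{-1}$ and $(1-t\mu)^{-1}$ exist for $t \in [0,1)$ by the Liouville series construction in Section~\ref{CM-algebras}, and $(1 - th(1-t\nu)^{-1})^{-1}$ exists by the Neumann series once $\|h\| < (1-t)/t$ — and that commutativity of the algebra is used silently when factoring and when identifying the linear map with multiplication by a single element. None of this is deep; the commutativity of $(BM(G),*)$ (inherited from $G$ abelian) is exactly what makes the one-dimensional-calculus formula $\frac{d}{dx}\frac{(1-t)x}{1-tx} = \frac{1-t}{(1-tx)^2}$ transfer verbatim. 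An alternative, essentially equivalent, route is to differentiate the identity $Q_{t}(\mu)*(1-t\mu) = (1-t)\mu$ formally in $\mu$ to get $d_\nu Q_t *(1-t\nu) - t\, Q_t(\nu) = (1-t)\,\mathrm{Id}$, then solve for $d_\nu Q_t$ using $Q_t(\nu) = (1-t)\nu(1-t\nu)^{-1}$; this gives the same answer and could be presented first as motivation, with the $\varepsilon$--$\delta$ estimate above as the rigorous justification.
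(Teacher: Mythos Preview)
Your proof is correct and follows essentially the same route as the paper: start from Lemma~\ref{AA} to write $Q_t(\mu)-Q_t(\nu)=(1-t)(1-t\mu)^{-1}(1-t\nu)^{-1}(\mu-\nu)$, then show that replacing $(1-t\mu)^{-1}$ by $(1-t\nu)^{-1}$ introduces an $o(\|\mu-\nu\|)$ error. The only cosmetic difference is that the paper simply invokes continuity of $\lambda\mapsto(1-t\lambda)^{-1}$ at $\nu$ to control $(1-t\mu)^{-1}-(1-t\nu)^{-1}$, whereas you make this quantitative via the Neumann series and obtain an explicit $O(\|h\|^2)$ remainder; the underlying argument is the same.
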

\begin{proof} Let $\nu\in \mathcal{P}(G)$ be a fixed probability and take $\eta=\mu-\nu$. From Lemma~\ref{AA}, we have
\begin{align*}
&\| Q_{t}(\mu) - Q_{t}(\nu) -  (1-t)( 1- t \nu)^{-2} * \eta\|
\\&=
\| (1-t)*( 1- t \mu)^{-1}*( 1- t \nu)^{-1} *\eta -  (1-t)( 1- t \nu)^{-2} *\eta \|
\\&=
\| (1-t)*( 1- t \nu)^{-1}*\left[( 1- t \nu)^{-1}  -  ( 1- t \nu)^{-1}\right] *\eta \|\leq \varepsilon \| \eta \|,
\end{align*}
where $\delta$ is chosen from $\varepsilon$  by the continuity~\footnote{$( 1- t \lambda)^{-1}=1+t\lambda+ t^2 \lambda ^2+ ...$ is continuous as absolutely convergent series} of $F(\lambda)= ( 1- t \lambda)^{-1}$ in $\nu$.
Following the definition of \cite{Blum}, we get that $d_{\nu}Q_{t } = (1-t)( 1- t \nu)^{-2} \in BM(G).$
\end{proof}

\begin{theorem} \label{infingenerator} The infinitesimal generator of $Q_t$ is $\chi: \mathcal{P}(G) \to BM(G)$ given by
$$ \chi(\mu)= \mu^{2} - \mu.$$
In particular, $\mu$ is a fixed point of $Q_{t }$ only if  $\mu$ is a singularity of the field, i.e., $\chi(\mu)= 0$.
\end{theorem}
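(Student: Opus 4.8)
The plan is to compute the $t$-derivative of the flow $Q_t$ at $t=0$, since the infinitesimal generator of a one-parameter semigroup $\{Q_t\}$ is by definition $\chi(\mu) = \frac{d}{dt}\big|_{t=0^+} Q_t(\mu)$. I would start from the explicit series expansion already established, namely
$$Q_t(\mu) = (1-t)\mu + (1-t)t\,\mu^2 + (1-t)t^2\,\mu^3 + \cdots = (1-t)\mu * (1-t\mu)^{-1},$$
and differentiate term by term with respect to $t$. The series is absolutely convergent for $t$ in a neighborhood of $0$ (dominated by a geometric series in $t$ with ratio bounded away from $1$), so term-by-term differentiation in the total variation norm is justified; this is the routine estimate. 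Collecting the lowest-order terms: the coefficient of $\mu$ is $\frac{d}{dt}(1-t) = -1$ at $t=0$, and the coefficient of $\mu^2$ is $\frac{d}{dt}\big((1-t)t\big) = 1 - 2t = 1$ at $t=0$, while all higher powers $\mu^n$ for $n \geq 3$ carry a factor $t^{n-1}$ with $n-1 \geq 2$ and hence contribute $0$ to the derivative at $t=0$. This gives $\chi(\mu) = -\mu + \mu^2 = \mu^2 - \mu$.

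Alternatively — and this is probably the cleaner route to present — I would use the semigroup property from Lemma~\ref{flow property}, $Q_{t_1} \circ Q_{t_2} = Q_{t_1 \triangle t_2}$, together with Proposition~\ref{AB}. Writing $\chi(\mu) = \frac{d}{dt}\big|_{0^+}Q_t(\mu)$, one differentiates the relation $Q_{s\triangle t}(\mu) = Q_s(Q_t(\mu))$ in $s$ at $s=0$; since $\frac{\partial}{\partial s}(s \triangle t)\big|_{s=0} = \frac{\partial}{\partial s}\big(1-(1-s)(1-t)\big)\big|_{s=0} = (1-t)$, the chain rule gives $(1-t)\,\chi(\mu)\cdot(\text{flow direction at }Q_t(\mu)) $; but it is more direct simply to differentiate $Q_t(\mu) = (1-t)\mu*(1-t\mu)^{-1}$ using the product rule in the commutative Banach algebra $BM(G)$: with $A(t) = (1-t)\mu$ and $B(t) = (1-t\mu)^{-1}$ one has $A'(t) = -\mu$, and $\frac{d}{dt}(1-t\mu)^{-1} = (1-t\mu)^{-2}\mu$ (differentiating the Liouville series, or from the identity $(1-t\mu)*(1-t\mu)^{-1}=1$), so
$$\frac{d}{dt}Q_t(\mu) = -\mu*(1-t\mu)^{-1} + (1-t)\mu*(1-t\mu)^{-2}\mu.$$
Evaluating at $t=0$, where $(1-t\mu)^{-1} = 1$, yields $-\mu + \mu*\mu = \mu^2 - \mu = \chi(\mu)$.

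For the final sentence of the statement, suppose $\mu$ is a fixed point of the time-$t$ map $Q_t$ for the given $t$. By Proposition~\ref{Iterate Q} the iterates satisfy $Q_t^n(\mu) = Q_{t^{n_\triangle}}(\mu)$ with $t^{n_\triangle} = 1-(1-t)^n \to 1$ as $n\to\infty$; since $\mu$ is fixed, $Q_{t^{n_\triangle}}(\mu) = \mu$ along this sequence, and by the Remark following the invariant-sets discussion the fixed points of $Q_t$ are exactly the idempotents, $\mu^2 = \mu$. (Indeed this is shown there directly: $Q_t(\mu)=\mu \Rightarrow \mu*(1-t\mu) = (1-t)\mu \Rightarrow \mu^2 = \mu$.) Hence $\chi(\mu) = \mu^2 - \mu = 0$, so $\mu$ is a singularity of the field.

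I expect the only genuine obstacle is the justification of differentiating the defining series (or the inverse $(1-t\mu)^{-1}$) term by term in the total variation norm and identifying the limit with the Banach-algebra derivative in the sense of \cite{Blum}; once the geometric domination is set up this is straightforward, and the algebraic identity $\chi(\mu)=\mu^2-\mu$ falls out immediately. One should also note the one-sided nature of the derivative at $t=0$ (the flow is defined only for $t\in[0,1)$), but this causes no difficulty since the series is real-analytic in $t$ on $(-1,1)$.
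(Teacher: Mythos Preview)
Your argument is correct and the core computation is the same as the paper's. The one noteworthy difference is that the paper works with the $\triangle$-derivative at a \emph{general} time $t$: it computes $\frac{d}{dt_\triangle}Q_t(\mu)=\lim_{h\to 0}\frac{1}{h}\bigl(Q_{t\triangle h}(\mu)-Q_t(\mu)\bigr)$, uses the semigroup property to rewrite this as $\lim_{h\to 0}\frac{1}{h}\bigl(Q_h(\nu)-\nu\bigr)$ with $\nu=Q_t(\mu)$, and then combines $(1-h)\nu*(1-h\nu)^{-1}-\nu$ into a single fraction to obtain $\nu^2-\nu$. This yields the full flow equation $\frac{d}{dt_\triangle}Q_t(\mu)=\chi(Q_t(\mu))$ for every $t$, not just the generator at $t=0$. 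Your product-rule (or term-by-term) computation at $t=0$ is exactly that same algebra specialized to $\nu=\mu$, and it is enough for the theorem as stated; the paper's route simply buys the flow ODE at all times as a byproduct. Your treatment of the ``fixed point $\Rightarrow$ singularity'' clause via the idempotent characterization $Q_t(\mu)=\mu\Rightarrow\mu^2=\mu$ is the same as the paper's.
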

\begin{proof}
Let $Q_{t }(\mu)=(1-t) \mu *( 1- t \mu)^{-1} \in\mathcal{P}(G)$ be the flow induced by the rational map. Then time $\triangle$-derivative is given by $\displaystyle\frac{d}{dt_{\triangle}}Q_{t }(\mu)$:
\begin{align*}
\frac{d}{dt_{\triangle}}Q_{t }(\mu)&=\lim_{h\to 0} \frac{1}{h}\left(Q_{t \triangle h}(\mu) - Q_{t }(\mu) \right) =
\lim_{h\to 0} \frac{1}{h}\left(Q_{h}(Q_{t}(\mu)) - Q_{t }(\mu) \right)
\\& =
\lim_{h\to 0} \frac{1}{h}\left(Q_{h}(\nu) - \nu \right)
\end{align*}
where $\nu= Q_{t}(\mu)$.

So,
\begin{align*}
\frac{d}{dt_{\triangle}}Q_{t }(\mu)&=\lim_{h\to 0} \frac{1}{h}\left((1-h) \nu *( 1- h \nu)^{-1} - \nu \right)
\\&= \nu * \lim_{h\to 0} \frac{1}{h}\left((1-h) ( 1- h \nu)^{-1} - 1 \right)
\\&= \nu * \lim_{h\to 0} \frac{1}{h}\left((1-h)  -  ( 1- h \nu) \right) *( 1- h \nu)^{-1}
\\&= - \nu *  ( 1-  \nu)= \nu^2 - \nu= Q_{t }(\mu)^{2} - Q_{t }(\mu),
\end{align*}
what means
$$\frac{d}{dt_{\triangle}}Q_{t }(\mu) + Q_{t }(\mu)= Q_{t }(\mu)^{2}.$$

Let $\chi: \mathcal{P}(G) \to BM(G)$ be the field
$$ \chi(\mu)= \mu^{2} - \mu$$
then  $$\frac{d}{dt_{\triangle}}Q_{t }(\mu)= \chi(Q_{t }(\mu)).$$
We know that $\mu$ is a fixed point of $Q_{t }$ if  $\mu^{2} = \mu$, that is, $ \chi(\mu)= 0$. Thus, the fixed points are the singularities of the field $ \chi$.
\end{proof}

\begin{remark}
We observe that if $\sigma(t)$ is a differentiable path (as a path in a Banach manifold) then,
\begin{align*}
\frac{d \sigma(t)}{dt_{\triangle}}&= \lim_{h\to 0} \frac{1}{h}\left(\sigma(t \triangle h) - \sigma(t) \right)
\\&= \lim_{h\to 0} \frac{1}{h}\left(\frac{\sigma(t \triangle h) - \sigma(t) }{t \triangle h - t} \right) \left(\frac{t \triangle h - t}{h}\right)
=(1-t) \frac{d \sigma(t)}{dt}.
\end{align*}
Thus the derivatives $\frac{d \sigma(t)}{dt_{\triangle}}$ and$\frac{d \sigma(t)}{dt}$ are conformal, by a factor $1-t$.\\
\end{remark}

The next lemma gives a complete characterization of the hyperbolic behavior in the fixed points and it is the key to understand the stability.

\begin{lemma} \label{diff on a fix point} If $\eta$ is a fixed point then
$$d_{\eta}Q_{t } (\nu) =\frac{1}{(1-t)}\left[(1- ( 2t- t^2)) \delta_e + ( 2t- t^2) \eta \right] * \nu.$$
More than that, if $\eta*\mu=\eta$  and $\nu= \mu-\eta$ then
$d_{\eta}Q_{t } (\nu) = (1-t)  \nu.$
\end{lemma}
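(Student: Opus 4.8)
The plan is to start from the derivative formula already computed in Proposition~\ref{AB}, namely $d_{\eta}Q_{t}(\nu) = (1-t)(1-t\eta)^{-2} * \nu$, and to simplify the factor $(1-t\eta)^{-2}$ under the hypothesis that $\eta$ is a fixed point, i.e.\ $\eta^2 = \eta$. The key algebraic observation is that when $\eta$ is idempotent, all powers $\eta^n$ collapse: $\eta^n = \eta$ for every $n \geq 1$. Hence $(1-t\eta)^{-1} = \sum_{n\geq 0} t^n \eta^n = \delta_e + \left(\sum_{n\geq 1} t^n\right)\eta = \delta_e + \frac{t}{1-t}\,\eta$. Equivalently $(1-t\eta)^{-1} = \frac{1}{1-t}\left[(1-t)\delta_e + t\eta\right]$. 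Squaring this and again using $\eta^2 = \eta$ gives $(1-t\eta)^{-2} = \frac{1}{(1-t)^2}\left[(1-t)^2\delta_e + (2t(1-t)+t^2)\eta\right] = \frac{1}{(1-t)^2}\left[(1-t)^2\delta_e + (2t-t^2)\eta\right]$. Multiplying by $(1-t)$ and noting $(1-t)^2 = 1-(2t-t^2)$ yields precisely
$$d_{\eta}Q_{t}(\nu) = \frac{1}{1-t}\left[(1-(2t-t^2))\delta_e + (2t-t^2)\eta\right] * \nu,$$
which is the first claim.

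For the second claim, I would take $\nu = \mu - \eta$ with $\eta * \mu = \eta$, and plug this specific $\nu$ into the formula just obtained. The point is that $\eta * \nu = \eta*\mu - \eta*\eta = \eta - \eta = 0$, so the $\eta$-term in the bracket kills $\nu$ entirely: $\left[(1-(2t-t^2))\delta_e + (2t-t^2)\eta\right] * \nu = (1-(2t-t^2))\,\nu + (2t-t^2)\,(\eta*\nu) = (1-(2t-t^2))\,\nu = (1-t)^2\,\nu$. Dividing by $(1-t)$ gives $d_{\eta}Q_{t}(\nu) = (1-t)\,\nu$, as desired.

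There is essentially no hard step here — the whole argument is the collapse of powers of an idempotent measure, combined with the convolution identity $\delta_e * \nu = \nu$. The only point demanding a little care is the manipulation of the Liouville (geometric) series for $(1-t\eta)^{-1}$: one must justify that rearranging $\sum_{n\geq 0} t^n\eta^n$ into $\delta_e + \left(\sum_{n\geq 1}t^n\right)\eta$ is legitimate, which follows from absolute convergence of the series in the total variation norm (since $\sum t^n < \infty$ and $\|\eta^n\| = 1$), exactly as used throughout Section~\ref{CM-algebras}. I would state this convergence remark once and then proceed with the formal computation.
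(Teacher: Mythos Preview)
Your proposal is correct and follows essentially the same approach as the paper: both start from Proposition~\ref{AB}, collapse the Liouville series $(1-t\eta)^{-1}=\delta_e+\frac{t}{1-t}\eta$ using idempotence, and then obtain the second claim via $\eta*(\mu-\eta)=\eta*\mu-\eta^2=0$. Your version simply spells out the squaring of $(1-t\eta)^{-1}$ more explicitly than the paper does.
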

\begin{proof} We remind the reader that $\eta $ is a fixed point if and only if $\eta^2=\eta$ and, $1-t\eta$ has an inverse which is given by the Liouville series
$$(1 - t \eta)^{-1}= 1+  (t \eta +  t^2  \eta^2 +  t^3 \eta^3 +...)=1+  (t    +  t^2    +  t^3   +...) \eta= 1 + \frac{t}{1-t}\eta.$$
From Lemma~\ref{AB} we get
$$d_{\eta}Q_{t } (\nu) =\left[(1-t) + \frac{( 2t- t^2)}{(1-t)} \eta \right] * \nu,$$
dividing by $1-t$ we get the expression above.
In particular, if $\nu= \mu-\eta$, and $\eta*\mu=\eta$ we get
$$d_{\eta}Q_{t } (\nu) =(1-t)\nu + \left[  \frac{( 2t- t^2)}{(1-t)} \right] \eta * (\mu-\eta)= (1-t)\nu,$$
because $\eta * (\mu-\eta)= \eta *\mu-\eta^2 = \eta *\mu-\eta =0$.
\end{proof}

From the formula of $d_{\eta}Q_{t }$  in the Lemma~\ref{diff on a fix point} there is a hyperbolic behavior, contracting in the ``direction" of $\text{Ker}(\eta)$ and expanding in the ``direction" pointing outside of $\text{Ker}(\eta)$ because $(1-t) \to 0$ and $\frac{( 2t- t^2)}{(1-t)} \to +\infty$ when $t \to 1$. So is natural to consider the stability of  these points.

\begin{center}
\includegraphics[scale=0.4,angle=0]{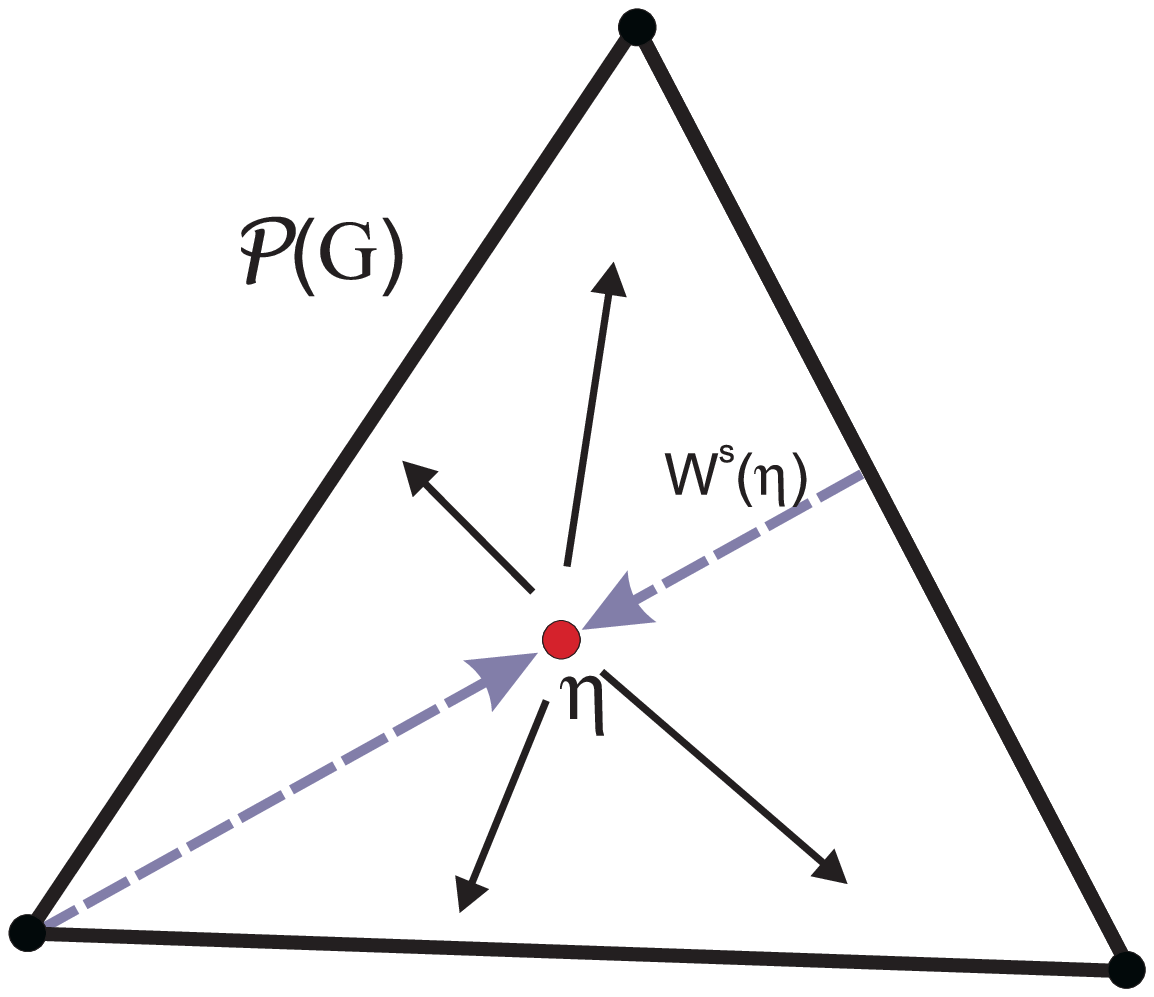}\\
\footnotesize{Stable sets}
\end{center}

\begin{definition}\label{stability definition}
Let $f_{t}: M \to M$ a continuous semigroup of continuous injective maps of a metric space $(M,d)$. The stable set of a fixed point $p$ is the set
$$W^{s}(f,p)=\{ x \in M \; | \; f_{t}(x) \to p, \; t \to +\infty\},$$
moreover, if $U \subset M$ is an open set containing $p$, the local stable set is
$$W^{s}_{loc}(f,p, U)=\{ x \in M \; | \; f_{t}(x) \in U, \; \forall t\geq 0\},$$
in particular $L_{\omega}(x) \subseteq W^{s}(f,p),  \; \forall x \in W^{s}(f,p)$.
\end{definition}

Here, $M=\mathcal{P}(G)$, $d(\mu,\nu)=\|\mu -\nu\|$,  $f_{t}(x)$ is replaced by $Q_{t } (\mu)$, $\eta$ is a fixed point and $U= U_{\delta}(\eta)=\{\mu \in \mathcal{P}(G) \;  | \;  \|(\mu-\eta)\|< \delta\}$. Adapting the local stable set definition to this setting we have
$$W^{s}(\eta)= \{ \mu \in \mathcal{P}(G) \; |\ \; \| Q_t(\mu) - \eta \| \to 0, \; \text{ for }  t \to 1^- \}.$$
$$W^{s}_{\delta}(\eta)= W^{s}_{loc}(\eta, U_{\delta}(\eta))=\{ \mu \in \mathcal{P}(G) \; |\ \; \| Q_t(\mu) - \eta \| < \delta, \; \forall t \in[0 , 1) \}.$$

The next lemma shows that  $Q_t$ is hyperbolic on $\text{Ker}(\eta)$. As an application we can estimate the convergence rate of $Q_t(\mu) \to \eta$ when $t \to 1^-$.
\begin{lemma}\label{exponetial rate stable man} For each $0<\lambda<1$ there is $\delta>0$ and $0<t_0<1$  such that all  $\mu \in  \text{Ker}(\eta) \cap U_{\delta}(\eta)$, we have $\| Q_t(\mu) - \eta \| \leq \lambda \|(\mu-\eta)\|, \; \forall t \in[t_0 , 1).$ Moreover $Q_t(\mu) \in U_{\varepsilon}(\eta), \forall t \in[t_0 , 1)$, for $\varepsilon=\lambda \delta$.  In particular, $Q_{t^{n\Delta}}(\mu) \to \eta$ with the same  rate as $\delta \lambda^n$.
\end{lemma}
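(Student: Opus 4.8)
The plan is to exploit the contraction formula from Lemma~\ref{diff on a fix point}, namely that on $\text{Ker}(\eta)$ the derivative $d_\eta Q_t$ acts as multiplication by $(1-t)$, together with a Taylor-type estimate that comes directly from the definition of the derivative in a commutative Banach algebra (the one recalled before Proposition~\ref{AB}). First I would fix $0<\lambda<1$ and write, for $\mu \in \text{Ker}(\eta)$ with $\nu = \mu - \eta$, the exact identity from Lemma~\ref{AA}:
\begin{align*}
Q_t(\mu) - Q_t(\eta) &= (1-t)\,(1-t\mu)^{-1}(1-t\eta)^{-1}(\mu - \eta)\\
&= (1-t)\,(1-t\mu)^{-1}(1-t\eta)^{-1}\nu .
\end{align*}
Since $\eta$ is a fixed point, $Q_t(\eta)=\eta$, so $Q_t(\mu)-\eta = (1-t)(1-t\mu)^{-1}(1-t\eta)^{-1}\nu$. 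The factor $(1-t)$ is exactly the hyperbolic eigenvalue; the job is to control the remaining operator $(1-t\mu)^{-1}(1-t\eta)^{-1}$ uniformly for $\mu$ near $\eta$.

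Next I would estimate $\|(1-t\mu)^{-1}(1-t\eta)^{-1} - (1-t\eta)^{-2}\|$. Writing $(1-t\mu)^{-1} - (1-t\eta)^{-1} = (1-t\mu)^{-1}\,t(\mu-\eta)\,(1-t\eta)^{-1}$ and using $\|(1-t\lambda)^{-1}\| \le \frac{1}{1-t}$ for any probability $\lambda$, one gets $\|(1-t\mu)^{-1}-(1-t\eta)^{-1}\| \le \frac{t}{(1-t)^2}\|\mu-\eta\|$, hence
$$
\big\|(1-t\mu)^{-1}(1-t\eta)^{-1}\big\| \;\le\; \big\|(1-t\eta)^{-2}\big\| + \frac{t}{(1-t)^3}\|\mu-\eta\|.
$$
Combining with the identity above,
$$
\|Q_t(\mu)-\eta\| \;\le\; (1-t)\Big(\big\|(1-t\eta)^{-2}\big\| + \tfrac{t}{(1-t)^3}\|\mu-\eta\|\Big)\|\mu-\eta\|.
$$
Now use that $\eta^2=\eta$ gives the closed form $(1-t\eta)^{-1} = 1 + \frac{t}{1-t}\eta$ from Lemma~\ref{diff on a fix point}, so $(1-t\eta)^{-2}$ is again of the form $1 + c(t)\eta$ with $\|(1-t\eta)^{-2}\|$ bounded by $\frac{1}{(1-t)^2}$ in total variation — but crucially, when applied to $\nu \in \text{Ker}(\eta)$ it acts as the identity plus a term killed by $\eta*\nu$, which is what makes $d_\eta Q_t(\nu)=(1-t)\nu$ exactly. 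So on the kernel direction the dominant term is genuinely $(1-t)\|\nu\|$, and the correction is $O\big((1-t)^{-2}\|\mu-\eta\|^2\big)$-type — wait, this is precisely the obstacle.

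The main obstacle is the interplay of the two limits: the contraction factor $(1-t)\to 0$ is good, but the error terms carry negative powers of $(1-t)$, so one cannot simply take $t$ close to $1$ for fixed $\delta$. The resolution is to choose the parameters in the right order: first pick $t_0<1$ so that $(1-t_0)\|(1-t\eta)^{-2}\|_{\text{eff}} \le \lambda/2$ on $\text{Ker}(\eta)$ (the effective norm on the kernel, which stays bounded), then shrink $\delta>0$ so that the remaining curvature term $(1-t)\cdot\frac{t}{(1-t)^3}\delta \le \lambda/2$ for all $t\in[t_0,1)$ — here one must check this quantity is bounded for $t\in[t_0,1)$, which fails unless one is more careful. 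So in fact I expect the correct argument restricts further: on $\text{Ker}(\eta)$, $\mu=\eta+\nu$ with $\eta*\nu=0$, one has $\mu^2 = \eta + 2\eta*\nu + \nu^2 = \eta+\nu^2$, so $(1-t\mu)^{-1}$ simplifies using $\eta*\mu = \eta$, and the genuinely dangerous $(1-t)^{-3}$ term actually collapses — I would push the algebra to show $Q_t(\mu)-\eta = (1-t)\nu + (1-t)\cdot(\text{terms that are } O(\|\nu\|^2) \text{ with bounded coefficients})$. Once that cancellation is in hand, choose $t_0$ with $1-t_0 < \lambda/2$ and $\delta$ with $\delta\cdot(\text{bounded const}) < \lambda/2$; then $\|Q_t(\mu)-\eta\| \le (1-t)(1+C\delta)\|\nu\| \le \lambda\|\mu-\eta\|$ for all $t\in[t_0,1)$. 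The bound $Q_t(\mu)\in U_\varepsilon(\eta)$ with $\varepsilon = \lambda\delta$ is then immediate since $\|\mu-\eta\|<\delta$. Finally, for the iterate statement, Proposition~\ref{Iterate Q} gives $Q_t^n(\mu)=Q_{t^{n_\triangle}}(\mu)$ and $1-t^{n_\triangle}=(1-t)^n$, so applying the just-proved contraction $n$ times (staying in $U_\delta(\eta)$ inductively, which holds since $\lambda<1$) yields $\|Q_t^n(\mu)-\eta\| \le \lambda^n\|\mu-\eta\| \le \delta\lambda^n$, the claimed rate.
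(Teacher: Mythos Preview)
Your strategy---linearize $Q_t$ at the idempotent $\eta$, use that on $\text{Ker}(\eta)$ the derivative acts as multiplication by $1-t$, and bound the remainder---is exactly the paper's. The paper's version is three lines: invoke differentiability to get $\|Q_t(\mu)-\eta-d_\eta Q_t(\mu-\eta)\|<\sigma\|\mu-\eta\|$ for $\|\mu-\eta\|<\delta$, replace $d_\eta Q_t(\mu-\eta)$ by $(1-t)(\mu-\eta)$ via Lemma~\ref{diff on a fix point}, and set $t_0=1+\sigma-\lambda$ so that $\sigma+1-t\le\lambda$ for $t\ge t_0$.

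You are actually more scrupulous than the paper on one point: you flag that the $\delta$ furnished by differentiability might shrink as $t\to 1^-$, since generic estimates on $(1-t\mu)^{-1}$ carry negative powers of $1-t$. The paper silently treats $\delta$ as uniform in $t$; this is true, but only because of the kernel structure, and your instinct to exploit $\eta*\nu=0$ algebraically is the correct fix. The gap in your proposal is that you stop at ``I would push the algebra to show the dangerous $(1-t)^{-3}$ term collapses'' without doing it. Here is the missing line: with $\nu=\mu-\eta$ and $\eta*\nu=0$, the Liouville series gives $(1-t\eta)^{-1}\nu=\nu$ directly, and since $\mu*\nu=\nu^2$ (hence $\mu^n*\nu=\nu^{n+1}$ by induction, using $\eta*\nu^k=0$), one gets $(1-t\mu)^{-1}\nu=(1-t\nu)^{-1}\nu$. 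Plugging into Lemma~\ref{AA} yields the \emph{exact} identity
\[
Q_t(\mu)-\eta \;=\; (1-t)\,(1-t\nu)^{-1}\nu,\qquad \text{so}\quad \|Q_t(\mu)-\eta\|\le \frac{1-t}{1-t\|\nu\|}\,\|\nu\|\le \frac{1-t}{1-\delta}\,\|\nu\|,
\]
valid for $\|\nu\|<\delta<1$ and uniform in $t\in[0,1)$. Now choosing any $\delta<1$ and then $t_0$ with $(1-t_0)/(1-\delta)\le\lambda$ gives the contraction; the rest of your argument (the $U_\varepsilon$ inclusion and the $\delta\lambda^n$ rate via iteration and Proposition~\ref{Iterate Q}) is fine as written.
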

\begin{proof}
Consider  $\mu  \in \text{Ker}(\eta) \cap U_{\delta}(\eta)$, we recall that for each $\sigma >0$ there is  $\delta>0 $ such that for all $\mu$ with  $\|\mu-\eta\| <\delta$ we get $\|Q_t(\mu) - \eta - d_{\eta}Q_{t } (\mu-\eta) \|< \sigma \|\mu-\eta\|.$ Since $\mu \in \text{Ker}(\eta)$ we get
$$\| Q_t(\mu) - \eta \| \leq \| Q_t(\mu) - \eta  + d_{\eta}Q_{t } (\mu-\eta) - d_{\eta}Q_{t } (\mu-\eta)\| \leq$$
$$\leq \sigma  \| \mu - \eta \| + \|d_{\eta}Q_{t } (\mu-\eta)\| = \sigma \| \mu - \eta \| +(1-t) \| \mu - \eta \|$$
$$(\sigma +1 -t) \| \mu - \eta \| = \lambda \| \mu - \eta \|,$$
if $\sigma +1 -t = \lambda <1 $ what happens if $t> t_0= 1 + \sigma - \lambda$, then  $Q_t(\mu) \in U_{\varepsilon}(\eta), \forall t \in[t_0 , 1)$, because $\|Q_t(\mu) - \eta\| < \lambda \delta=\varepsilon$.
\end{proof}

\begin{theorem}\label{hyp stab manif general} (The Stable Manifold Theorem)
Given $\eta$ a fixed point of $Q_t$ and  $0<\lambda<1$, $0<\delta $, $0<t_0<1$  given by Lemma~\ref{exponetial rate stable man} then there is $\varepsilon>0,$ such that $W^{s}_{\varepsilon}(\eta)=W^{s}_{loc}(\eta, U_{\varepsilon}(\eta))$  is an invariant local affine submanifold of $\text{Ker}(\eta) \cap U_{\delta}(\eta)$.
\end{theorem}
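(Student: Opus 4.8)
The plan is to build $W^s_\varepsilon(\eta)$ explicitly as the intersection of the Choquet-Deny kernel $\ker(\eta)$ with a small ball, and then verify the three claims: (i) it is an affine submanifold, (ii) it is locally invariant, and (iii) it is contained in the stable set. First I would recall from Lemma~\ref{diff on a fix point} that on the affine slice $\ker(\eta)$ passing through $\eta$ (that is, probabilities $\mu$ with $\eta*\mu=\eta$, so that $\nu=\mu-\eta$ satisfies $\eta*\nu=0$) the differential acts simply as $d_\eta Q_t(\nu)=(1-t)\nu$; by Lemma~\ref{kernel and co-kernel invariance} the set $\ker(\eta)$ is genuinely invariant under $Q_t$, not merely up to higher order. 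That invariance is the crucial structural fact that makes this a \emph{local} stable manifold theorem whose manifold is flat: the nonlinear contribution never pushes orbits off $\ker(\eta)$.

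Next I would fix the constants. Given $0<\lambda<1$, take $\delta>0$ and $t_0\in(0,1)$ from Lemma~\ref{exponetial rate stable man}, and set $\varepsilon=\lambda\delta$. Define
$$W^s_\varepsilon(\eta):=\ker(\eta)\cap U_\delta(\eta).$$
I would first argue the \textbf{affine submanifold} claim: $\ker(\eta)=\{\mu\in\mathcal{P}(G)\mid \eta*(1-\mu)=0\}$ is cut out by the continuous \emph{linear} map $\mu\mapsto \eta*(1-\mu)=\eta-\eta*\mu$ on the affine space $\mathcal{P}(G)\subset BM(G)$, hence is an affine subset; intersecting with the open ball $U_\delta(\eta)$ gives a local affine submanifold, with tangent (directions) the linear subspace $\{\nu\in BM(G)\mid \eta*\nu=0,\ \nu(1)=0\}$ at each point. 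This is essentially a bookkeeping step.

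The substantive step is \textbf{invariance and stability together}. For $\mu\in W^s_\varepsilon(\eta)$, Lemma~\ref{kernel and co-kernel invariance} gives $Q_t(\mu)\in\ker(\eta)$ for all $t$, and Lemma~\ref{exponetial rate stable man} gives $\|Q_t(\mu)-\eta\|\le\lambda\|\mu-\eta\|<\lambda\delta=\varepsilon<\delta$ for all $t\ge t_0$; for $t<t_0$ one uses the Lipschitz bound of Proposition~\ref{rationalmapproperties}(c) together with a possible shrinking of $\delta$ to keep $Q_t(\mu)$ inside $U_\delta(\eta)$, so $Q_t(\mu)\in W^s_\varepsilon(\eta)$ for all $t\in[0,1)$ — i.e. $W^s_\varepsilon(\eta)=W^s_{loc}(\eta,U_\varepsilon(\eta))$ is $Q_t$-invariant. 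Finally, iterating the contraction estimate along $Q_t^{n}(\mu)=Q_{t^{n_\triangle}}(\mu)$ (Proposition~\ref{Iterate Q}) and using $t^{n_\triangle}\to 1$ yields $\|Q_{t^{n_\triangle}}(\mu)-\eta\|\le\delta\lambda^n\to 0$, so $\mu\in W^s(\eta)$, and combined with completeness of the flow (Proposition~\ref{flow complete}) this shows $W^s_\varepsilon(\eta)$ sits inside the stable set as required.

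I expect the main obstacle to be the low-$t$ part of the invariance claim: Lemma~\ref{exponetial rate stable man} only controls $Q_t$ for $t\ge t_0$, so for $t\in[0,t_0)$ one must rule out that an orbit temporarily leaves $U_\delta(\eta)$ before the contraction kicks in. The fix is to observe that $t\mapsto Q_t(\mu)$ is continuous and, by the Lipschitz property with constant $\tfrac{1}{1-t}\le\tfrac{1}{1-t_0}$ on $[0,t_0]$, the orbit stays within a fixed dilation of the initial distance; shrinking $\delta$ (equivalently $\varepsilon$) at the outset by this uniform factor absorbs the transient excursion, and nothing else in the argument is affected since $\varepsilon=\lambda\delta$ can be rechosen accordingly.
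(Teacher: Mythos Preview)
Your argument has a genuine gap: you never establish that $W^s_\varepsilon(\eta)\subseteq\ker(\eta)$. You write ``Define $W^s_\varepsilon(\eta):=\ker(\eta)\cap U_\delta(\eta)$'', but that symbol already has a meaning fixed just before the theorem, namely the \emph{dynamically} defined set $\{\mu:\|Q_t(\mu)-\eta\|<\varepsilon\ \forall t\in[0,1)\}$. The substance of the theorem is precisely that this dynamical set is trapped inside the affine slice $\ker(\eta)$; what you prove is only the easier, opposite inclusion (points of $\ker(\eta)$ near $\eta$ stay near $\eta$), which is essentially a restatement of Lemma~\ref{exponetial rate stable man}. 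The paper supplies the missing direction via the \emph{expanding} part of the differential in Lemma~\ref{diff on a fix point}: if $\mu\notin\ker(\eta)$, so $\eta*(\mu-\eta)\neq 0$, then
\[
d_\eta Q_t(\mu-\eta)=(1-t)(\mu-\eta)+\frac{2t-t^2}{1-t}\,\eta*(\mu-\eta),
\]
and since the second summand blows up as $t\to 1^-$ while the first vanishes and the differentiability remainder is $O(\|\mu-\eta\|)$, the inequality $\|Q_t(\mu)-\eta\|<\varepsilon$ cannot persist for all $t$; hence $\mu\notin W^s_\varepsilon(\eta)$. Without this step you have not shown the local stable set is an affine submanifold---you have only shown that a particular affine set is forward-invariant and contracted.

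There is also a smaller inconsistency worth flagging. With $\varepsilon=\lambda\delta<\delta$, your proposed equality $\ker(\eta)\cap U_\delta(\eta)=W^s_{loc}(\eta,U_\varepsilon(\eta))$ is impossible as stated, because $W^s_{loc}(\eta,U_\varepsilon(\eta))\subseteq U_\varepsilon(\eta)$ (take $t=0$) while $U_\varepsilon(\eta)\subsetneq U_\delta(\eta)$. The paper does not claim equality with the full $\delta$-ball slice; rather, after proving $W^s_\varepsilon(\eta)\subset\ker(\eta)\cap U_\delta(\eta)$, it shows $Q_{t_0}(\ker(\eta)\cap U_\delta(\eta))\subseteq W^s_\varepsilon(\eta)$ and uses that $Q_{t_0}$ is a homeomorphism onto its image to conclude the local stable set is an affine piece. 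Your worry about the transient regime $t\in[0,t_0)$ is handled by this same device (compose with $Q_{t_0}$ first) rather than by a Lipschitz shrinking argument.
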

\begin{proof}
Suppose,  $\mu \in W^{s}_{\varepsilon}(\eta)$. We claim that $\mu \in \text{Ker}(\eta)$, otherwise if $\eta * (\mu-\eta)\neq 0$, by differentiability we have $d_{\eta}Q_{t } (\mu -\eta)= (1-t)(\mu -\eta) + \left[  \frac{( 2t- t^2)}{(1-t)} \right] \eta * (\mu-\eta)$ then the inequality $\|Q_t(\mu) - \eta\| < \varepsilon$ became
$$\|Q_t(\mu) - \eta - d_{\eta}Q_{t } (\mu -\eta) + d_{\eta}Q_{t } (\mu -\eta)\| < \varepsilon$$
$$\|Q_t(\mu) - \eta - d_{\eta}Q_{t } (\mu -\eta) + (1-t)(\mu -\eta) + \left[  \frac{( 2t- t^2)}{(1-t)} \right] \eta * (\mu-\eta)\| < \varepsilon$$
a contradiction because\\
$(1-t)(\mu -\eta) \to 0$, when $t \to 1^-$;\\
$\|Q_t(\mu) - \eta - d_{\eta}Q_{t } (\mu -\eta)\|< \sigma \|\mu-\eta\|$, if $\varepsilon <\delta$;\\
and $\frac{( 2t- t^2)}{(1-t)} \to +\infty$, when $t \to 1^-$. Choosing $\varepsilon <\delta$ small enough we get $W^{s}_{\varepsilon}(\eta) \subset \text{Ker}(\eta)\cap U_{\delta}(\eta)$.
Only remains to show that $\text{Ker}(\eta)\cap U_{\delta}(\eta)$ is a piece of some local stable manifold.

Consider  $\mu  \in Q_{t_0}(\text{Ker}(\eta) \cap U_{\delta}(\eta))=\text{Ker}(\eta) \cap Q_{t_0}(U_{\delta}(\eta))$  because $\text{Ker}(\eta)$ is $Q_{t_0}$ invariant. We are going to show that $\mu \in W^{s}_{\varepsilon}(\eta)$ for $\varepsilon=\lambda \delta< \delta$. Since $\mu  = Q_{t_0}(\nu)$ for some $\nu \in \text{Ker}(\eta) \cap U_{\delta}(\eta)$ we get $ Q_{t }(\mu) = Q_{t_0 \Delta t}(\nu)$ where  $t_0 \Delta t > t_0$ then from Lemma~\ref{exponetial rate stable man} we get  $\|Q_t(\mu) - \eta\| <\|Q_{t_0 \Delta t}(\nu) - \eta\|< \varepsilon$ thus $\mu \in W^{s}_{\varepsilon}(\eta)$. In order to finish  the proof we just point out that $Q_{t_0}(U_{\delta}(\eta))$ is homeomorphic to $U_{\delta}(\eta)$.
\end{proof}

The next result is an immediate corollary of the  exponential convergence in Lemma~\ref{exponetial rate stable man}.
\begin{corollary}\label{global stable manif} Under the same hypothesis of the Theorem~\ref{hyp stab manif general}
there is $\varepsilon>0,$ such that
$$W^{s}(\eta)=\bigcup_{t \geq 0}Q_{t}^{-1} W^{s}_{\varepsilon}(\eta).$$
\end{corollary}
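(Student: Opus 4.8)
The plan is to show the two inclusions separately, using that $W^{s}_{\varepsilon}(\eta)$ is the local stable manifold from Theorem~\ref{hyp stab manif general} and that $Q_t$ is an injective complete flow (Proposition~\ref{flow complete}), so the preimages $Q_t^{-1}$ make sense on the image. First I would fix the $\varepsilon>0$, $\delta>0$, $t_0<1$ and $0<\lambda<1$ supplied by Lemma~\ref{exponetial rate stable man} and Theorem~\ref{hyp stab manif general}, so that $W^{s}_{\varepsilon}(\eta)\subseteq \text{Ker}(\eta)\cap U_{\delta}(\eta)$ and every orbit starting in $W^{s}_{\varepsilon}(\eta)$ stays in $U_{\varepsilon}(\eta)$ and converges to $\eta$ at rate $\delta\lambda^{n}$ along the discrete times $t^{n_{\triangle}}$.

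For the inclusion $\bigcup_{t\geq 0}Q_t^{-1}W^{s}_{\varepsilon}(\eta)\subseteq W^{s}(\eta)$: take $\mu$ with $Q_{t}(\mu)\in W^{s}_{\varepsilon}(\eta)$ for some fixed $t$. Then for $s\to 1^{-}$ we have $Q_{s}(Q_{t}(\mu))=Q_{t\triangle s}(\mu)$ by Lemma~\ref{flow property}, and since $t\triangle s\to 1^{-}$ as $s\to 1^{-}$ (Proposition~\ref{Delta properties}), the convergence $Q_{s}(Q_t(\mu))\to\eta$ from $Q_t(\mu)\in W^{s}_{\varepsilon}(\eta)\subseteq W^{s}(\eta)$ rewrites as $Q_{r}(\mu)\to\eta$ along the cofinal family $r=t\triangle s$; hence $\mu\in W^{s}(\eta)$.

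For the reverse inclusion $W^{s}(\eta)\subseteq\bigcup_{t\geq 0}Q_t^{-1}W^{s}_{\varepsilon}(\eta)$: take $\mu\in W^{s}(\eta)$, so $Q_{s}(\mu)\to\eta$ as $s\to 1^{-}$. Then there is $t_{1}\in[0,1)$ with $Q_{t_1}(\mu)\in U_{\varepsilon}(\eta)$. The point is that once the orbit enters a sufficiently small neighborhood it is trapped: by the argument in the proof of Theorem~\ref{hyp stab manif general}, any point of $W^{s}(\eta)$ lying in $U_{\varepsilon}(\eta)$ must lie in $\text{Ker}(\eta)$ — otherwise the $\eta*(\cdot-\eta)\neq 0$ component, amplified by $\frac{2t-t^{2}}{1-t}\to+\infty$, would eventually push the orbit out of $U_{\varepsilon}(\eta)$, contradicting $Q_s(\mu)\to\eta$. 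Since $Q_{t_1}(\mu)\in\text{Ker}(\eta)\cap U_{\varepsilon}(\eta)\subseteq\text{Ker}(\eta)\cap U_{\delta}(\eta)$ and this set is a piece of the local stable manifold, we get $Q_{t_1}(\mu)\in W^{s}_{\varepsilon}(\eta)$ (applying Lemma~\ref{exponetial rate stable man} from time $\max(t_0,t_1)$ if necessary to guarantee the orbit stays within $U_{\varepsilon}(\eta)$ for all later times), so $\mu\in Q_{t_1}^{-1}W^{s}_{\varepsilon}(\eta)$.

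The main obstacle I anticipate is the trapping step in the reverse inclusion: making rigorous that "eventually entering $U_{\varepsilon}(\eta)$" forces membership in $\text{Ker}(\eta)$, rather than merely staying close for a while. This needs the hyperbolic estimate of Lemma~\ref{diff on a fix point} applied uniformly for $t$ near $1$ — precisely the dichotomy that the $\text{Ker}(\eta)$-component contracts by $(1-t)$ while the transverse component is expanded by $\frac{2t-t^{2}}{1-t}$ — together with care that the local linearization error $\sigma\|\mu-\eta\|$ is dominated by the expansion, exactly as in Theorem~\ref{hyp stab manif general}. Everything else is bookkeeping with the $\triangle$-operation and the semigroup property.
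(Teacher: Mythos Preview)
Your argument is correct in spirit and both inclusions go through, but you have manufactured an obstacle that is not there. The paper treats this corollary as immediate from the exponential convergence in Lemma~\ref{exponetial rate stable man}, and indeed it is.

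The reverse inclusion $W^{s}(\eta)\subseteq\bigcup_{t\geq 0}Q_t^{-1}W^{s}_{\varepsilon}(\eta)$ does not need any hyperbolic dichotomy or $\text{Ker}(\eta)$ argument. Since $\mu\in W^{s}(\eta)$ means $\|Q_{r}(\mu)-\eta\|\to 0$ as $r\to 1^{-}$, you can choose $t_{1}$ so that $\|Q_{r}(\mu)-\eta\|<\varepsilon$ for \emph{every} $r\geq t_{1}$, not just for the single value $r=t_{1}$. Then for any $s\in[0,1)$ the semigroup law gives $Q_{s}(Q_{t_{1}}(\mu))=Q_{t_{1}\triangle s}(\mu)$ with $t_{1}\triangle s\geq t_{1}$, so $Q_{s}(Q_{t_{1}}(\mu))\in U_{\varepsilon}(\eta)$ automatically. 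Hence $Q_{t_{1}}(\mu)\in W^{s}_{\varepsilon}(\eta)$ straight from the definition. Your detour through the $\text{Ker}(\eta)$ trapping argument is unnecessary, and as written it actually leaves a small gap: you pick $t_{1}$ only so that $Q_{t_{1}}(\mu)\in U_{\varepsilon}(\eta)$ at that one instant, then invoke Lemma~\ref{exponetial rate stable man} for $s\geq t_{0}$, but never handle $s\in[0,t_{0})$. Strengthening the choice of $t_{1}$ as above closes this and makes the whole ``main obstacle'' paragraph redundant.

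For the other inclusion, your argument is fine; one only needs that $W^{s}_{\varepsilon}(\eta)\subseteq W^{s}(\eta)$, which is precisely what the exponential rate in Lemma~\ref{exponetial rate stable man} guarantees, together with the semigroup property and the fact that $s\mapsto t\triangle s$ maps $[0,1)$ onto $[t,1)$.
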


\begin{remark} \label{main repelling} Is easy to see that $W^{s}(\eta)=\{ \mu \; |\ \;  \delta_{e}*\mu=\delta_{e} \} =\{ \delta_{e} \},$
also taking $\eta=\delta_{e}$  we get
$d_{\delta_{e}}Q_{t } (\nu) =\frac{1}{(1-t)}\left[(1- ( 2t- t^2)) \delta_e + ( 2t- t^2) \delta_{e} \right] * \nu= \frac{1}{(1-t)} \nu.$  So the fixed point $\eta=\delta_{e}$ is always a repelling.
\end{remark}

 \begin{example}
 Let $G=\mathbb{S}^1$ and $Leb$ the Lebesgue measure. Then
 $
 W^{s}(Leb)\supseteq \{\mu \in \mathcal{P}(\mathbb{S}^1) \; | \; \mu  \text{ is positive on open sets}\}
 $, from Theorem~\ref{haar}.\\
 \end{example}


\section{Applications to finite commutative groups}\label{applicSection}

In \cite{BOR} the authors has proved an useful computational result on powers of convolution on finite groups.
\begin{definition}\label{Acyclic} (Acyclic)
Given $\nu\in\mathcal{P}(G)$, we define the set $Z_{+}(\nu)^{m}$ by
$$Z_{+}(\nu)^{m}=\{g_{i_1}...g_{i_m}:g_{i_k}\in\mbox{supp}(\nu)\}.$$
Let $H$ the subgroup of $G$ generated by $\mbox{supp}(\nu)$.
We say that $\nu$ is an acyclic probability measure if there exist $N\in\mathbb{N}$ such that
$Z_{+}(\nu)^{N}=H$. In particular, $Z_{+}(\nu)^{1}=\mbox{supp}(\nu)$.
\end{definition}

\begin{theorem}\label{powerconv} (Baraviera-Oliveira-Rodrigues 2014)
Let $G=\{g_0,...,g_{n-1}\}$ be a finite group. If $\nu\in\mathcal{P}(G)$ is an acyclic probability and
$H$ is the subgroup generated by the support of $\nu$, then
$
\lim_{n\to\infty}\nu^{n}=\sum_{h\in H}\frac{1}{|H|}\delta_h.
$
In particular for a dense and open set  in $\mathcal{P}(G)$, $\mbox{supp}(\nu)=G$ then
$
\lim_{n\to\infty}\nu^{n}=\sum_{g\in G}\frac{1}{|G|}\delta_g.
$
\end{theorem}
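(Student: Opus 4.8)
The statement to be proved is Theorem~\ref{powerconv}, which asserts that for an acyclic probability $\nu$ on a finite group $G$ with $H$ the subgroup generated by $\supp(\nu)$, one has $\nu^n \to \frac{1}{|H|}\sum_{h\in H}\delta_h$. Since this is attributed to the authors' earlier paper \cite{BOR}, the plan is to reconstruct the natural argument via harmonic analysis on finite abelian groups (consistent with the commutative setting used throughout the paper). First I would reduce to the case $H = G$: by Lemma~\ref{invar of supp}, $\supp(\nu^n) \subseteq H$ for all $n$, so the entire orbit lives in the finite-dimensional space of measures supported on $H$, and we may simply replace $G$ by $H$ and assume $\supp(\nu)$ generates $G$.

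The core of the argument is the Fourier transform. For each character $\chi \in \widehat{G}$, set $\widehat{\nu}(\chi) = \sum_{g} \nu(g)\chi(g)$; then $\widehat{\nu^n}(\chi) = \widehat{\nu}(\chi)^n$, so the claim reduces to showing $\widehat{\nu}(\chi)^n \to 1$ if $\chi$ is the trivial character and $\to 0$ otherwise — because the uniform (Haar) measure on $G$ is exactly the measure whose Fourier transform is the indicator of the trivial character, and on a finite group pointwise convergence of Fourier coefficients is equivalent to convergence of the measures (the Fourier transform is a linear isomorphism of a finite-dimensional space, hence a homeomorphism). For the trivial character $\widehat{\nu}(\mathbf{1}) = \sum_g \nu(g) = 1$, giving the limit $1$. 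For a nontrivial $\chi$, since $\nu$ is a probability we have $|\widehat{\nu}(\chi)| = |\sum_g \nu(g)\chi(g)| \le 1$, with equality only if all the unit-modulus complex numbers $\chi(g)$, $g \in \supp(\nu)$, are equal, say to some value $\zeta$. Here is where acyclicity enters: if $|\widehat{\nu}(\chi)| = 1$ then $\chi \equiv \zeta$ on $\supp(\nu)$, hence $\chi \equiv \zeta^m$ on $Z_+(\nu)^m$ for every $m$, and taking $m = N$ from the definition of acyclicity gives $\chi \equiv \zeta^N$ on all of $G$; evaluating at the identity forces $\zeta^N = 1$ and $\chi \equiv 1$, contradicting $\chi \neq \mathbf{1}$. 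Therefore $|\widehat{\nu}(\chi)| < 1$ strictly for nontrivial $\chi$, and since $\widehat{G}$ is finite there is a uniform bound $\rho = \max_{\chi \ne \mathbf{1}} |\widehat{\nu}(\chi)| < 1$, giving $|\widehat{\nu}(\chi)^n| \le \rho^n \to 0$. Assembling the coordinate-wise limits and applying the inverse Fourier transform yields $\nu^n \to \frac{1}{|G|}\sum_g \delta_g$, and undoing the reduction gives the statement for $H$.

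The main obstacle is the exclusion of nontrivial characters of modulus $1$: this is precisely the cyclicity phenomenon (for instance, $\nu = \delta_g$ with $g$ of order $k$ never converges, as its powers cycle through $\delta_g, \delta_{g^2}, \dots$), and acyclicity is exactly the combinatorial hypothesis engineered to rule it out. One must be slightly careful that $\zeta$ need not be a root of unity a priori — it is an arbitrary point on the unit circle — but the identity $\zeta^N = \chi(e) = 1$ recovers this at the end. The remaining steps (Fourier transform turns convolution into multiplication, finite-dimensional pointwise convergence equals norm convergence, uniform measure $\leftrightarrow$ indicator of trivial character) are routine and I would not belabor them. The final sentence of the theorem — that a dense open set of $\nu$ has full support — is immediate, since $\{\nu : \supp(\nu) = G\} = \{\nu : \nu(g) > 0 \ \forall g\}$ is the intersection of finitely many open half-spaces with $\mathcal{P}(G)$, hence open and dense, and such $\nu$ are trivially acyclic with $H = G$.
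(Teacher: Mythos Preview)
The paper does not actually supply a proof of this theorem: it is quoted verbatim from the authors' earlier work \cite{BOR} and used as a black box in Section~\ref{applicSection}. So there is no proof in the paper to compare your proposal against.

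That said, your argument is correct and is the standard one. The reduction to $H=G$ via Lemma~\ref{invar of supp} is clean; the Fourier computation is right; and your use of acyclicity is exactly what is needed: if $|\widehat{\nu}(\chi)|=1$ for some nontrivial character then $\chi$ is constant $=\zeta$ on $\supp(\nu)$, hence constant $=\zeta^m$ on each $Z_+(\nu)^m$, and $Z_+(\nu)^N=G$ forces $\chi$ constant, hence trivial. The only caveat is that your proof assumes $G$ is abelian (you need the dual group $\widehat{G}$ to separate points and to diagonalize convolution), whereas the theorem as stated says only ``finite group''; but since the entire paper works under the standing hypothesis that $G$ is abelian, this is not a gap in context. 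The closing remark about density and openness of full-support probabilities is also fine.
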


\subsection{Limit sets and hyperbolicity computations}
\begin{lemma}
Let $G=\{g_0=e,...,g_{n-1}\}$ be a finite group. The Haar measures for $G$
are given by the multiples of the counting measure, i.e., the measures $\mu$ defined by
$\mu=p\sum_{i=0}^{n-1}\delta_{g_i}$, for some $p>0$. In particular, the unique
Haar probability measure on a finite group of order n is given by
$\mu=\frac{1}{n}\sum_{i=0}^{n-1}p_i\delta_{g_i}$.
\end{lemma}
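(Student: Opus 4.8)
The plan is to characterize Haar measures on a finite group $G = \{g_0 = e, \dots, g_{n-1}\}$ directly from the defining invariance property $\mu(g^{-1}A) = \mu(A)$ for all $g \in G$ and all $A \subseteq G$, and then specialize to probabilities. First I would observe that on a finite group every Borel measure $\mu$ is determined by its values $p_i = \mu(\{g_i\})$ on singletons, so $\mu = \sum_{i=0}^{n-1} p_i \delta_{g_i}$. The left-invariance condition applied to the singleton $A = \{g_j\}$ and an arbitrary $g = g_k$ gives $\mu(g_k^{-1}\{g_j\}) = \mu(\{g_j\})$; since $g_k^{-1}\{g_j\}$ is again a singleton $\{g_\ell\}$, and as $g_k$ ranges over $G$ the element $g_\ell = g_k^{-1} g_j$ ranges over all of $G$, we conclude $p_\ell = p_j$ for every pair of indices. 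Hence all the $p_i$ coincide, say $p_i = p$, which forces $\mu = p \sum_{i=0}^{n-1} \delta_{g_i}$, a multiple of the counting measure.

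Next I would check the converse: any such $\mu = p \sum_i \delta_{g_i}$ with $p > 0$ is indeed left-invariant, since left translation $x \mapsto g^{-1} x$ permutes the elements of $G$ and therefore leaves the (uniform) masses invariant — so these are exactly the Haar measures (up to the positive scalar $p$). By Theorem~\ref{equiv haar idemp} this is consistent with left- and right-invariance coinciding on finite (hence compact) groups.

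Finally, to pin down the Haar \emph{probability}, impose $\mu(G) = 1$, i.e. $\sum_{i=0}^{n-1} p = np = 1$, so $p = 1/n$ and $\mu = \frac{1}{n}\sum_{i=0}^{n-1}\delta_{g_i}$. (The statement as written displays $\mu = \frac{1}{n}\sum_{i=0}^{n-1} p_i \delta_{g_i}$; since the argument forces every $p_i$ to equal $1$, this is just the uniform probability, and I would state it that way.)

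There is no real obstacle here: the only point requiring the slightest care is the translation step, namely that for fixed $j$ the map $k \mapsto g_k^{-1} g_j$ is a bijection of the index set — this is just the cancellation law in the group, and it is what propagates equality of a single pair of masses to equality of all of them. Everything else is a finite bookkeeping computation with counting measures.
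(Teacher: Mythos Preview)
Your proposal is correct and follows essentially the same approach as the paper: write $\mu=\sum_i p_i\delta_{g_i}$, apply the left-invariance $\mu(g^{-1}E)=\mu(E)$ to singletons, and conclude all $p_i$ coincide. Your version is in fact slightly more complete than the paper's, since you also verify the converse direction and make explicit the normalization $p=1/n$ for the probability case.
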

\begin{proof}
Let $E\subset G$ and $\mu=\sum_{i=0}^{n-1}p_i\delta_{g_i}$ a Haar measure on $G$.
Then we know that
$$
\sum_{i=0}^{n-1}p_i\delta_{g_i}(E)=\mu(E)=\mu(g^{-1}E)=\sum_{i=0}^{n-1}p_i\delta_{gg_i}(E)
$$
 If we take $E=\{g\}$ , then
$\mu(\{g\})=\mu(\{e\})$ for all $g\in G$. It implies $p_i=p_j$ for all
$0\leq i,j\leq n-1$ and then $\mu=p\sum_{i=0}^{n-1}\delta_{g_i}$ for $p=p_0$.
\end{proof}

In the next we will discuss the structure of the $\omega$-limit for a finite group.
\begin{lemma}\label{main attrac point}
If G is finite, there is a point $\nu$  such that, its basin of attraction (points that has $\nu$ as its $\omega$-limit), is an open and dense set in $\mathcal{P}(G) $.  We denote this point as the main attractor  point.
\end{lemma}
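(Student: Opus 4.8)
The plan is to combine the Haar-limit result of Theorem~\ref{powerconv} with the $\omega$-limit analysis of Theorem~\ref{haar}, exploiting the fact that the set of acyclic probabilities with full support is open and dense in $\mathcal{P}(G)$. First I would set $\nu_0=\frac{1}{n}\sum_{i=0}^{n-1}\delta_{g_i}$, the unique Haar probability on $G$, which satisfies $\nu_0^2=\nu_0$ and is positive on every (nonempty) subset of $G$; hence it is a fixed point of $Q_t$ by the computation $Fix(Q_t)=\{\mu\mid\mu^2=\mu\}$. The candidate main attractor point is $\nu=\nu_0$.

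Next I would identify the basin. Let $U\subseteq\mathcal{P}(G)$ be the set of probabilities $\mu$ with $\supp\mu=G$; this is open (it is the intersection of the finitely many open half-spaces $\{\mu(g_i)>0\}$) and dense in $\mathcal{P}(G)$, and by Definition~\ref{Acyclic} every such $\mu$ is acyclic with generated subgroup $S(\mu)=G$. For $\mu\in U$ all powers $\mu^n$ are also positive on $G$ (indeed $\supp\mu^n\supseteq\supp\mu=G$ by Lemma~\ref{invar of supp}-type reasoning), so the hypotheses of Theorem~\ref{haar} are met: $L_\omega^Q(\mu)=\ker^c(\mu)\cap S(\mu)=\{\nu_0\}$, the Haar probability of $S(\mu)=G$. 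Since $L_\omega^T(\mu)\subseteq L_\omega^Q(\mu)$ and the latter is the single point $\nu_0$, we get $T^{n}(\mu)=Q_t^{n}(\mu)\to\nu_0$ as $n\to\infty$, i.e. $\mu$ lies in the basin of attraction of $\nu_0$. Therefore the basin contains the open dense set $U$, which is exactly the claim.

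I expect the main (minor) obstacle to be the bookkeeping between the two notions of $\omega$-limit: Theorem~\ref{haar} is phrased for the continuous family $Q_t$ as $t\to1^-$, whereas Lemma~\ref{main attrac point} asks about the discrete iterates $T^n=Q_t^n$. This is handled by Proposition~\ref{Iterate Q}, which gives $Q_t^n(\mu)=Q_{t^{n_\triangle}}(\mu)$ with $t^{n_\triangle}\to1^-$, so convergence of the continuous family forces convergence of the discrete orbit to the same limit; alternatively one invokes the inclusion $L_\omega^T(\mu)\subseteq L_\omega^Q(\mu)$ from Proposition~\ref{omega limite} directly. A second point worth a sentence is to confirm that $\nu_0$ is genuinely \emph{attracting} and not merely an accumulation point of one sequence: this follows because $L_\omega^Q(\mu)$ is shown to be the \emph{entire} set of accumulation points and equals $\{\nu_0\}$, so every subsequential limit is $\nu_0$, hence the full orbit converges. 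Density and openness of $U$ are elementary and need no further comment.
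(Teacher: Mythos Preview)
Your argument is correct, but it takes a somewhat different route from the paper's. The paper proceeds directly via Theorem~\ref{powerconv}: for $\mu$ in the open dense set $\mathcal{O}$ one has $\mu^{m}\to\bar\mu=\frac{1}{n}\sum_{i}\delta_{g_i}$; then for any $\nu\in L_{\omega}(\mu)$ the Choquet--Deny relation $\nu*\mu=\nu$ (Proposition~\ref{omega limite}) is iterated to $\nu*\mu^{m}=\nu$, and passing to the limit gives $\nu*\bar\mu=\nu$, which forces $\nu=\bar\mu$. You instead invoke Theorem~\ref{haar}: since $\mu\in U$ has full support it is positive on every open set of $G=S(\mu)$, hence $L_{\omega}^{Q}(\mu)=\{\nu_0\}$ by that theorem, and the discrete orbit then converges via $L_{\omega}^{T}(\mu)\subseteq L_{\omega}^{Q}(\mu)$. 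Both arguments identify the same open dense set and the same attractor, and both rest on Proposition~\ref{omega limite}; the difference is only in how ``$\nu*\mu=\nu\Rightarrow\nu=\text{Haar}$'' is obtained---the paper through the convergence of convolution powers, you through the maximum-principle machinery behind Lemma~\ref{choquet-deny is haar}. Your route is a bit more economical since it cites an already-established general theorem, while the paper's is more explicit for finite $G$ and makes the link with \cite{BOR} visible. One small remark: your opening sentence announces Theorem~\ref{powerconv} as part of the plan, but your actual argument never uses it---Theorem~\ref{haar} alone suffices.
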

\begin{proof}
In  \cite{BOR} is shown that there is an open and dense set $\mathcal{O} \subset \mathcal{P}(G)$ where $\mu^{m} \to \overline{\mu}= \sum_{i} 1/n  \delta_{g_i}$ . Let $\mu \in \mathcal{O}$ any point, and $\nu \in L_{\omega}(\mu)$, we know that $\nu*\mu=\nu$, so
$\nu*\mu^{m}=\nu$
taking the limit $m \to \infty$ we get
$\nu * \overline{\mu}=\nu$
that has the unique solution, $\nu= \sum_{i} 1/n  \delta_{g_i}$.
Thus,
$L_{\omega}(\mu)= \{ \sum_{i} 1/n  \delta_{g_i}\}, \; \; \forall \mu \in \mathcal{O}$.\\

\end{proof}

In the next we will compute, explicitly, for $|G|=2$,
$d(Q_{t}^{n}(\mu),  \sum_{i} 1/n  \delta_{g_i}) \;  $ $\forall \mu \in \mathcal{O},$ that is the finite version of Lemma~\ref{exponetial rate stable man}.

\begin{proposition}\label{weak hyperb} (Example using explicit computation) If $G$ is a finite abelian group and $|G|=2$, then the convergence is exponential in basin of attraction of  the main attractor  point $\nu$.
\end{proposition}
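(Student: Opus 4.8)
The plan is to reduce everything to a completely explicit one–dimensional computation, exploiting the fact that a probability on a two-element group $G=\{e,a\}$ is determined by a single parameter. First I would write $G=\mathbb{Z}_2=\{e,a\}$ and parametrize $\mathcal{P}(G)$ by $\mu = x\,\delta_e + (1-x)\,\delta_a$ with $x\in[0,1]$; since $\delta_a * \delta_a = \delta_e$, the convolution algebra generated by $\mu$ is abelian and two-dimensional, and convolution corresponds to the simple rule $\mu * \nu \leftrightarrow$ the product in this algebra. The main attractor point here is $\nu = \tfrac12\delta_e + \tfrac12\delta_a$, i.e.\ the parameter value $x=\tfrac12$, which is the Haar probability and satisfies $\nu^2=\nu$ (indeed, by Lemma~\ref{main attrac point} the basin $\mathcal{O}$ of $\nu$ is open and dense, and here one checks $\mathcal{O}=\{x\neq 0\}$, i.e.\ all $\mu\neq\delta_e$).

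Next I would compute $Q_t(\mu)$ in the coordinate $x$. Writing $\mu = x\,\delta_e+(1-x)\,\delta_a$ and using the Liouville series for $(1-t\mu)^{-1}$, or more simply solving $Q_t(\mu)*(1-t\mu) = (1-t)\mu$ coordinatewise, one gets a Möbius-type expression for the new parameter $x' $ in terms of $x$ and $t$. It is cleaner to change coordinates to $s = 2x-1 \in[-1,1]$ (so that the fixed point $\nu$ is $s=0$ and the repelling fixed point $\delta_e$ is $s=1$): I expect the map to become genuinely linear or nearly linear in $s$, of the form $s \mapsto \dfrac{(1-t)\,s}{1 - t\, s}$ or similar, because $\chi(\mu)=\mu^2-\mu$ vanishes to first order at $\nu$ and the whole algebra is two-dimensional. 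Then by Proposition~\ref{Iterate Q}, $Q_t^n(\mu) = Q_{t^{n_\triangle}}(\mu)$ with $t^{n_\triangle} = 1-(1-t)^n$, so plugging $1-(1-t)^n$ in place of $t$ in the coordinate formula gives an explicit closed form for the $n$-th iterate's coordinate $s_n$ in terms of $s_0$ and $(1-t)^n$.

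From that closed form I would read off $d(Q_t^n(\mu),\nu) = \tfrac12|s_n|$ (up to the normalization of the total variation norm on a two-point space) and show it is bounded by $C\,(1-t)^n\,|s_0|$, i.e.\ $\|Q_t^n(\mu)-\nu\| \le C\,\rho^n\,\|\mu-\nu\|$ with $\rho = 1-t <1$, uniformly for $\mu$ in compact subsets of $\mathcal{O}$; this is exactly the finite, fully explicit incarnation of Lemma~\ref{exponetial rate stable man} and of the convergence rate $\delta\lambda^n$ appearing there, and it establishes the claimed exponential convergence in the basin of attraction. Finally I would note that the only point excluded is $\delta_e$ ($x=0$, $s=1$), which is the repeller of Remark~\ref{main repelling}, consistent with $\mathcal{O}$ being open and dense.

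The main obstacle is purely bookkeeping rather than conceptual: getting the coordinate formula for $Q_t$ exactly right, including the denominator $1-t s$ and confirming that it never vanishes for $s\in(-1,1)$ and $t\in[0,1)$ (so the iteration stays well defined and stays inside $\mathcal{P}(G)$), and then verifying that after substituting $t^{n_\triangle}=1-(1-t)^n$ the resulting bound is genuinely $O((1-t)^n)$ with a constant uniform on compacta — the denominator could in principle degrade the rate near $s=1$, so one must restrict to $|s_0|\le 1-\epsilon$, i.e.\ away from $\delta_e$, which is harmless since that point is not in the basin. I would present the computation in the coordinate $s=2x-1$ precisely to keep these estimates transparent.
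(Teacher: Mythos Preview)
Your plan is correct and in fact somewhat cleaner than the paper's own argument, though the two share the same starting point. Both parametrize $\mathcal{P}(\mathbb{Z}_2)$ by a single real variable and reduce $Q_t$ to an explicit M\"obius map of the interval. The paper stays in the coordinate $s\in[0,1]$ with $\mu=(1-s)\delta_0+s\delta_1$, computes $Q_t$ via the $2\times 2$ matrix $(I-t\,\mu(\mathbb{Z}_2^{-1}\times\mathbb{Z}_2))^{-1}$ to obtain $g(s)=s/((1-t)+2ts)$, and then proves exponential convergence by the mean value theorem, bounding $|g^n(s)-\tfrac12|\le\sigma^n|s-\tfrac12|$ with an orbit-dependent $\sigma=\sup_n g'(g^n(s))<1$. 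Your route instead centers at the fixed point via $s=2x-1$ (so that the map becomes $s\mapsto (1-t)s/(1-ts)$, which is indeed the correct formula), then invokes Proposition~\ref{Iterate Q} to replace $t$ by $1-(1-t)^n$ and obtain a closed form $s_n=(1-t)^n s_0/\bigl(1-s_0+(1-t)^n s_0\bigr)$. This yields the explicit rate $(1-t)^n$ with a constant depending only on the distance of $s_0$ from $1$, which is sharper and more transparent than the paper's contraction estimate.

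One bookkeeping slip to fix: with your parametrization $\mu=x\,\delta_e+(1-x)\,\delta_a$, the repeller $\delta_e$ corresponds to $x=1$ (hence $s=1$), not $x=0$; the point $x=0$ is $\delta_a$, which does lie in the basin. So $\mathcal{O}=\{x\neq 1\}=\{s\neq 1\}$. This is exactly the degeneracy you anticipated in the denominator $1-ts$ (resp.\ $1-s_0+(1-t)^n s_0$) near $s=1$, and your remedy of restricting to $|s_0|\le 1-\epsilon$ is the right one; the paper likewise restricts to compact subsets of $\mathcal{O}$.
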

\begin{proof}
Let $\mathcal{O} \subset \mathcal{P}(G)$ be the set in Lemma~\ref{main attrac point} and $\nu= \sum_{i} 1/n  \delta_{g_i}$ be the main attractor point. Let $\mu \in \mathcal{O}$ any point. In that case $|G|=2$ thus
$$\mathcal{P}(\mathbb{Z}_{2})=\{\mu= (1-s)   \delta_{0} + s \delta_{1} , | \; s \in [0,1]\} \cong \{ [1-s,  s] \in \mathbb{R}^{2}| \; s \in [0,1]\}.$$

So we can understand $\mathcal{P}(\mathbb{Z}_2)$ as an affine one dimensional  submanifold of $\mathbb{R}^{2}$ parameterized by $\varphi: [0,1] \to \mathbb{R}^{2}$ given by $\varphi(s)=[1-s,  s]$.

Given any $\mu \in \mathcal{O}= \varphi(0  ,  1]$ the only $\omega$-limit point of $Q_{t}$ is $\nu=1/2 \delta_{0} + 1/2 \delta_{1}$. So, we want to estimate the rate of convergence
$\lim_{n \to \infty}  Q_{t}^{n}(\mu) = 1/2 \delta_{0} + 1/2 \delta_{1},$
and show that this convergence is  exponential.
Using $\varphi$, $\nu=1/2 \delta_{0} + 1/2 \delta_{1}= \varphi(1/2)$ so if $\mu= \varphi(s)=[1-s,  s]$ it is enough to show that the second coordinate goes to 1/2.

Let $Q_{t}(\varphi(s))=[1-g(s)  ,  g(s)]$ be the image of $Q_{t}$,  then
$Q_{t}(\varphi(s))= (1-t) \mu * (1- t \mu)^{-1}$
became
$[1-g(s)  ,  g(s)]= (1-t) [1-s,  s]
(I- t \mu(\mathbb{Z}_{2}^{-1} \times \mathbb{Z}_{2}))^{-1}$.

Using
\[\mu(\mathbb{Z}_{2}^{-1} \times \mathbb{Z}_{2})=
\begin{bmatrix}
  1-s & s \\
  s & 1-s \\
\end{bmatrix}
\]
we get,
\begin{align*}
[1-g(s)  ,  g(s)]&= (1-t) [1-s,  s]
\left(\begin{bmatrix}
  1  & 0 \\
  0 & 1 \\
\end{bmatrix}- t
\begin{bmatrix}
  1-s & s \\
  s & 1-s \\
\end{bmatrix}\right)^{-1}\\
&=
(1-t) [1-s,  s]
\left(\begin{bmatrix}
  1-t(1-s) & -t s \\
  -t s & 1-t(1-s) \\
\end{bmatrix}\right)^{-1}.
\end{align*}
Using the inverse,
\begin{align*}
[1-g(s)  ,  g(s)]\left(\begin{bmatrix}
  1-t(1-s) & -t s \\
  -t s & 1-t(1-s) \\
\end{bmatrix}\right) &= (1-t) [1-s,  s].
\end{align*}
Thus, the second coordinate give us,
$(1-g(s))(-t s) + g(s) ( 1-t(1-s))= (1-t)s$
that is equivalent to
$g(s)= \frac{s}{(1-t) + 2 t s}.$
An straight forward computation shows that
$$g'(s)= \frac{(1-t)}{(1-t) + 2 t s}<1\text{ for }s \in [0,1),$$
and $g'(0)=1$ (so the fixed point it is not uniformly contractive).
It is easy to see that $g(1/2)=1/2$ (and $g(0)=0$) so
$$\|g(s) - 1/2\| = \|g(s) - g(1/2)\| \leq g'(s) \|s - 1/2\|.$$
Iterating, we get,
$\|g^{n}(s) - 1/2\| \leq \sigma^{n} \|s - 1/2\|$
where $\displaystyle \sigma= \sup_{n} g'(g^{n}(s))  < 1$.
From this, we get the result since,
\begin{align*}
\|Q_{t}^{n}(\mu) - (1/2 \delta_{0} + 1/2 \delta_{1})\| &= \|[1-g^{n}(s) -(1- 1/2)  ,  g^{n}(s) -1/2]\|\\
& \leq\sigma^{n} \|s - 1/2\|.
\end{align*}
In particular, in any compact subset of $\mathcal{O}$ containing $1/2 \delta_{0} + 1/2 \delta_{1}$,  the convergence is exponential.
\end{proof}

At this point we can prove, for finite groups, that the rational semigroup has exactly one main fixed point that is attracting, the proof is independent of Lemma~\ref{diff on a fix point} that is the equivalent for general groups. But first we need an lemma to show that the derivative on the algebra is the same derivative obtained as a finite dimensional differentiable manifold $\mathcal{P}(G)$ by the standard methods of differential geometry.
\begin{lemma} \label{differential} The linear map  $d_{\mu}Q_{t }: T_{\mu}\mathcal{P}(G) \to T_{Q_{t }(\mu)}\mathcal{P}(G)$ is given by
$$d_{\mu}Q_{t } (\nu) =(1-t)( 1- t \mu )^{-2} * \nu.$$
\end{lemma}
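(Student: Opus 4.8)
The plan is to show that the abstract Banach-algebra derivative of $Q_t$ computed in Proposition~\ref{AB}, namely $d_{\mu}Q_t = (1-t)(1-t\mu)^{-2} \in BM(G)$ acting by convolution, coincides with the classical tangent map of $Q_t$ viewed as a map of the finite-dimensional affine manifold $\mathcal{P}(G) \subset BM(G)$. The essential point is that for a finite group $BM(G) \cong \mathbb{R}^{|G|}$ is finite-dimensional, so $\mathcal{P}(G)$ is a genuine affine simplex whose tangent space $T_\mu\mathcal{P}(G)$ at any interior point is the hyperplane $V_0 = \{\nu \in BM(G) : \nu(G) = 0\}$ of signed measures of total mass zero, and differentiability in the Fréchet sense on this space is the usual notion.

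First I would identify $T_\mu\mathcal{P}(G)$ explicitly: since $\mathcal{P}(G)$ is cut out of $BM(G)$ by the single affine condition $\mu(G)=1$ (together with the open positivity conditions, which do not affect the tangent space at an interior point), curves $s \mapsto \mu + s\nu$ stay in the affine hull of $\mathcal{P}(G)$ precisely when $\nu(G)=0$, so $T_\mu\mathcal{P}(G) = V_0$ for every $\mu$ in the (relative) interior, and for boundary points one restricts to the sub-simplex spanned by $\supp\mu$. Next I would check that $V_0$ is invariant under the linear operator $L_\mu : \nu \mapsto (1-t)(1-t\mu)^{-2}*\nu$: because $(1-t\mu)^{-2} = \sum_{k\geq 0}(k+1)t^k\mu^k$ is a positive measure of total mass $\sum_{k\geq 0}(k+1)t^k(1-t)^{-2}\cdot$—more simply, convolution by any measure $\rho$ multiplies total mass by $\rho(G)$, and here $(1-t)(1-t\mu)^{-2}$ has total mass $(1-t)\cdot(1-t)^{-2} = (1-t)^{-1}$, so $L_\mu$ maps $V_0$ into $V_0$; one should note $Q_t(\mu)$ is interior whenever $\mu$ is, so the target tangent space is also $V_0$ (resp. the appropriate sub-simplex tangent space). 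Then I would invoke Proposition~\ref{AB}: the estimate proved there, $\|Q_t(\mu+\nu) - Q_t(\mu) - L_\mu\nu\| \leq \varepsilon\|\nu\|$ for $\|\nu\|$ small, is exactly the statement that $L_\mu$ restricted to $V_0$ is the Fréchet derivative of the manifold map $Q_t : \mathcal{P}(G) \to \mathcal{P}(G)$ at $\mu$, hence equals the geometric differential $d_\mu Q_t : T_\mu\mathcal{P}(G) \to T_{Q_t(\mu)}\mathcal{P}(G)$ by uniqueness of the derivative.

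The one genuine subtlety—and the step I expect to require the most care—is the boundary behavior: if $\mu$ is not positive on all of $G$ then $\mu$ lies on a face of the simplex, $Q_t(\mu)$ lies on the face spanned by the subgroup generated by $\supp\mu$ (Theorem~\ref{invariantsetsubgroup}), and one must verify that the inverse $(1-t\mu)^{-1}$ is still well-defined (it is, by the Liouville series, for all $t\in[0,1)$ and all $\mu\in\mathcal{P}(G)$), that $\supp(1-t\mu)^{-2}$ sits inside that subgroup so $L_\mu$ preserves the face's tangent space, and that the Fréchet estimate of Proposition~\ref{AB} localizes correctly to that face. Once this bookkeeping is in place, the identification $d_\mu Q_t(\nu) = (1-t)(1-t\mu)^{-2}*\nu$ holds on $T_\mu\mathcal{P}(G)$, which is what the lemma asserts; the content is really just that the two a priori different notions of derivative (commutative-Banach-algebra derivative à la \cite{Blum}, and differential-geometric tangent map on the finite-dimensional manifold $\mathcal{P}(G)$) agree, which follows from uniqueness of Fréchet derivatives together with the fact that the ambient differentiation in $BM(G)\cong\mathbb{R}^{|G|}$ restricts compatibly to the affine subspace $\mathcal{P}(G)$.
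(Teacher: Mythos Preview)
Your proposal is correct but takes a genuinely different route from the paper. The paper does \emph{not} invoke Proposition~\ref{AB} at all; instead it recomputes the derivative from scratch in the differential-geometric sense, by taking a curve $s\mapsto\mu(s)$ with $\mu(0)=\mu$ and $\mu'(0)=\nu$, writing the defining relation $Q_t(\mu(s))*(1-t\mu(s))=(1-t)\mu(s)$, and differentiating implicitly in $s$ at $s=0$. A short algebraic manipulation (using $(1-t)+tQ_t(\mu)=(1-t)(1-t\mu)^{-1}$) then yields $d_\mu Q_t(\nu)=(1-t)(1-t\mu)^{-2}*\nu$ directly. This is a stand-alone computation: it independently reproduces the formula of Proposition~\ref{AB}, thereby showing \emph{a posteriori} that the two notions of derivative coincide.

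Your approach, by contrast, is to argue \emph{a priori} that they must coincide: you identify $T_\mu\mathcal{P}(G)$ with the mass-zero hyperplane $V_0\subset BM(G)\cong\mathbb{R}^{|G|}$, observe that convolution by $(1-t)(1-t\mu)^{-2}$ preserves $V_0$, and then appeal to uniqueness of the Fr\'echet derivative to conclude that the Blum derivative of Proposition~\ref{AB}, restricted to $V_0$, \emph{is} the tangent map. This is more conceptual and economical---it reuses the estimate already proved rather than redoing a calculation---and it makes explicit why the two derivatives agree, which the paper's proof leaves implicit. The paper's implicit-differentiation argument, on the other hand, is more self-contained and avoids any discussion of tangent-space identification or boundary faces; it simply computes and lets the answer speak for itself.
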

\begin{proof} We recall that for a finite group of order $n$, $\mathcal{P}(G)$ is a differentiable submanifold of dimension $n-1$ of $\mathbb{R}^{n}$. Consider for each fixed $t$ the map, $Q_{t }(\mu): \mathcal{P}(G) \to \mathcal{P}(G)$, then the derivative $d_{\mu}Q_{t }: T_{\mu}\mathcal{P}(G) \to T_{Q_{t }(\mu)}\mathcal{P}(G)$ can be characterized its action on the tangent space. If $\nu= \frac{d\mu (s)}{ds}|_{s=0} \in T_{\mu}\mathcal{P}(G) $, $\mu (0)=\mu$, then
$d_{\mu}Q_{t } \nu = \frac{d}{ds} Q_{t }(\mu (s))|_{s=0}.$

Since the convolution is linear, we can differentiate $Q_t$ implicitly
\begin{align*}
Q_{t }(\mu(s))=(1-t) \mu(s) *( 1- t \mu(s))^{-1} \Rightarrow
Q_{t }(\mu(s))*( 1- t \mu(s))=(1-t) \mu(s)
\end{align*}
differentiating with respect to $s$ and taking $s=0$ we get
\begin{align*}
d_{\mu}Q_{t } (\nu) &*( 1- t \mu ) + Q_{t }(\mu )*( 0 - t \nu)=(1-t) \nu
\\&\Rightarrow
d_{\mu}Q_{t } (\nu) *( 1- t \mu )=((1-t)+ t Q_{t }(\mu ))* \nu
\\&\Rightarrow
d_{\mu}Q_{t } (\nu) *( 1- t \mu )=(1-t)( 1- t \mu )^{-1} * \nu
\\&\Rightarrow
d_{\mu}Q_{t } (\nu) =(1-t)( 1- t \mu )^{-2} * \nu.
\end{align*}
\end{proof}

\begin{theorem}\label{hyperb} If G is finite, then the convergence is exponential in basin of attraction of  the main attractor  point $\eta$.
\end{theorem}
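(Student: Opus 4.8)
The plan is to prove exponential convergence in two stages: first establish a local exponential contraction in a neighborhood of the main attractor $\eta=\sum_{i}\frac{1}{n}\delta_{g_i}$ (the Haar probability of $G$, by the lemma preceding Lemma~\ref{main attrac point}), and then bootstrap to the whole basin $\mathcal{O}$ of Lemma~\ref{main attrac point} using the already-established convergence $Q_t^{n}(\mu)\to\eta$. This recovers, for arbitrary finite $G$, the conclusion obtained by explicit computation in Proposition~\ref{weak hyperb} for $|G|=2$.

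For the local step I would use the finite-dimensional derivative formula of Lemma~\ref{differential}, $d_{\eta}Q_t(\nu)=(1-t)(1-t\eta)^{-2}*\nu$. Since $\eta^{2}=\eta$, the Liouville series collapses to $(1-t\eta)^{-1}=1+\frac{t}{1-t}\eta$, hence $(1-t\eta)^{-2}=1+\frac{2t-t^{2}}{(1-t)^{2}}\eta$ and $d_{\eta}Q_t(\nu)=(1-t)\nu+\frac{2t-t^{2}}{1-t}\,\eta*\nu$. The decisive point is that $\eta$ is the Haar measure of the \emph{whole} group $G$, so $\eta*\nu=0$ for every tangent vector $\nu\in T_{\eta}\mathcal{P}(G)$ (these have total mass $0$); therefore $d_{\eta}Q_t=(1-t)\,\text{Id}$ on $T_{\eta}\mathcal{P}(G)$, a genuine contraction of ratio $1-t<1$. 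Inserting this into the Taylor estimate used in Lemma~\ref{exponetial rate stable man}: given $0<\lambda<1$, picking $\sigma>0$ and $t_0=1+\sigma-\lambda$, for $t\in[t_0,1)$ and $\mu$ in a small ball $U_{\delta}(\eta)$ one obtains $\|Q_t(\mu)-\eta\|\le(\sigma+1-t)\|\mu-\eta\|\le\lambda\|\mu-\eta\|$, and iterating through $Q_t^{n}(\mu)=Q_{t^{n_{\triangle}}}(\mu)$ yields $\|Q_t^{n}(\mu)-\eta\|\le\lambda^{\,n}\|\mu-\eta\|$ whenever $\mu\in U_{\delta}(\eta)$.

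For the global step, take an arbitrary $\mu\in\mathcal{O}$. By Lemma~\ref{main attrac point} we have $Q_t^{n}(\mu)\to\eta$, so there is $n_0=n_0(\mu)$ with $Q_t^{n_0}(\mu)\in U_{\delta}(\eta)$; writing $Q_t^{\,n_0+k}(\mu)=Q_t^{\,k}\!\bigl(Q_t^{\,n_0}(\mu)\bigr)$ and applying the local estimate gives $\|Q_t^{\,n_0+k}(\mu)-\eta\|\le\lambda^{\,k}\,\|Q_t^{\,n_0}(\mu)-\eta\|$, i.e.\ exponential decay in the iteration count. On a compact subset of $\mathcal{O}$ one can choose $n_0$ uniformly, so the rate constant is uniform there.

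The main obstacle, and the only place finiteness is really used, is verifying that the entrance time $n_0$ is finite and locally uniform, equivalently that $\mathcal{O}$ is genuinely contained in the basin of $\eta$; this is exactly Lemma~\ref{main attrac point}, which rests on the convolution-power theorem~\ref{powerconv} from \cite{BOR}. A secondary point worth making explicit is that here $\text{Ker}(\eta)=\mathcal{P}(G)$, because $\eta$ is Haar on all of $G$, so the hyperbolic splitting of Lemma~\ref{diff on a fix point} degenerates: there is no expanding transverse direction inside $\mathcal{P}(G)$ and the whole simplex is locally a stable manifold, which is why the argument can bypass Lemma~\ref{diff on a fix point} and proceed with the purely finite-dimensional Lemma~\ref{differential}.
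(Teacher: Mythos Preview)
Your proposal is correct and follows the same route as the paper: both use Lemma~\ref{differential} to compute $d_{\eta}Q_t(\nu)=(1-t)\nu+\frac{2t-t^{2}}{1-t}\,\eta*\nu$ and then show the second summand vanishes on tangent vectors, concluding that $d_{\eta}Q_t=(1-t)\,\mathrm{Id}$ is a strict contraction. The only differences are cosmetic: where you invoke the Haar property to obtain $\eta*\nu=0$ directly, the paper verifies it by an explicit coefficient-by-coefficient computation, and the paper stops at the derivative calculation while you spell out the Taylor-estimate and local-to-global bootstrap that it leaves implicit.
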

\begin{proof}
Consider $|G|=n \geq 2$. From Theorem~\ref{powerconv} we know that  the main attractor point is
$$
\lim_{k\to\infty}\mu^{k}=\sum_{g\in G}\frac{1}{|G|}\delta_g = \eta.
$$
One must prove that $d_{\eta}Q_{t}$ is a hyperbolic attractive matrix.
The probability $\eta$ is idempotent and we get from Lemma~\ref{differential} that
$$d_{\eta}Q_{t } (\nu) =\left[(1-t) + \frac{( 2t- t^2)}{(1-t)} \eta \right] * \nu.$$
Since $\nu=a_0 \delta_{e} + a_1 \delta_{g_{1}} + ... + a_{n-1}\delta_{g_{n-1}}$ is a tangent measure we have  $a_0 + ... + a_{n-1} =0$. If we write $\left[(1-t) + \frac{( 2t- t^2)}{(1-t)} \eta \right]= b_0 \delta_{e} + b_1 \delta_{g_{1}} + ... + b_{1}\delta_{g_{n-1}}$ where  $b_0 = (1-t) +  \frac{( 2t- t^2)}{n(1-t)}$ and $b_1 = \frac{( 2t- t^2)}{n(1-t)}$.\\
Thus
\begin{align*}
&\left[(1-t) + \frac{( 2t- t^2)}{(1-t)} \eta \right] * \nu
\\&= \left[ b_0 \delta_{e} + b_1 \delta_{g_{1}} + ... + b_{1}\delta_{g_{n-1}} \right] * \left[ a_0 \delta_{e} + a_1 \delta_{g_{1}} + ... + a_{n-1}\delta_{g_{n-1}} \right]
\end{align*}
The coefficient of $\delta_{e}$ is
$$ b_0 a_0 + b_1 ( a_1 + ... + a_{n-1}) = b_0 a_0 + b_1 ( -a_0 )= (b_0 - b_1)* a_0= (1-t))* a_0.$$
Analogously, for $i=1.. n-1$, the coefficient of $\delta_{g_{i}}$ is
\begin{align*}
 b_0 a_i + b_1 ( a_1 + ...  a_{i-1}+  a_{i+1}+ ...  + a_{n-1})&= b_0 a_0 + b_1 ( -a_i )
 \\&= (b_0 - b_1)* a_i= (1-t))* a_i.
 \end{align*}
Thus, $d_{\eta}Q_{t } (\nu) =(1-t) \;\nu,$ an hyperbolic matrix with all eigenvalues equal to $1-t <1$.
\end{proof}

\subsection{Basic Sets} \label{basicsets}
If G is finite, there exist some decomposition  of subgroups $H_i$ of $G$
such that $Q_{t} \mathcal{P}(H_i) \subseteq \mathcal{P}(H_i)$  is an invariant set thus $Q_{t}|_{\mathcal{P}(H_i)}$ has its own main attractor  point $\nu_{i}$  and the convergence is exponential in its basin of attraction (on the induced topology). This decomposition is called \textbf{basic sets}. From this, we can give a complete description of the dynamics of the rational semigroup for the finite cases. In order to illustrate the reach of our technique we will compute the previous elements for the only two groups of order 4, $\mathbb{Z}_4$ and the Klein group. In both cases $P(G)$ can be geometrically represented as a tetrahedron in $\mathbb{R}^{3}$.

\textbf{The case G=$\mathbb{Z}_4$} \\
For $G= \mathbb{Z}_4$, we have\\
$H_0 = \{\overline{0}\}$ and the main attractor is $\nu_{0}=\delta_{\overline{0}}$\\
$H_1=\text{spam}(\overline{2})\sim \mathbb{Z}_2 $ and the main attractor is  $\nu_{1}=1/2 \delta_{\overline{0}} + 1/2 \delta_{\overline{2}}$\\
$H_2=\mathbb{Z}_4$ and the main attractor is $\nu_{2}=1/4 \delta_{\overline{0}} + \cdots + 1/4 \delta_{\overline{3}}$.  A graphical representation is in the figure below.
\begin{center}
\includegraphics[scale=0.5,angle=0]{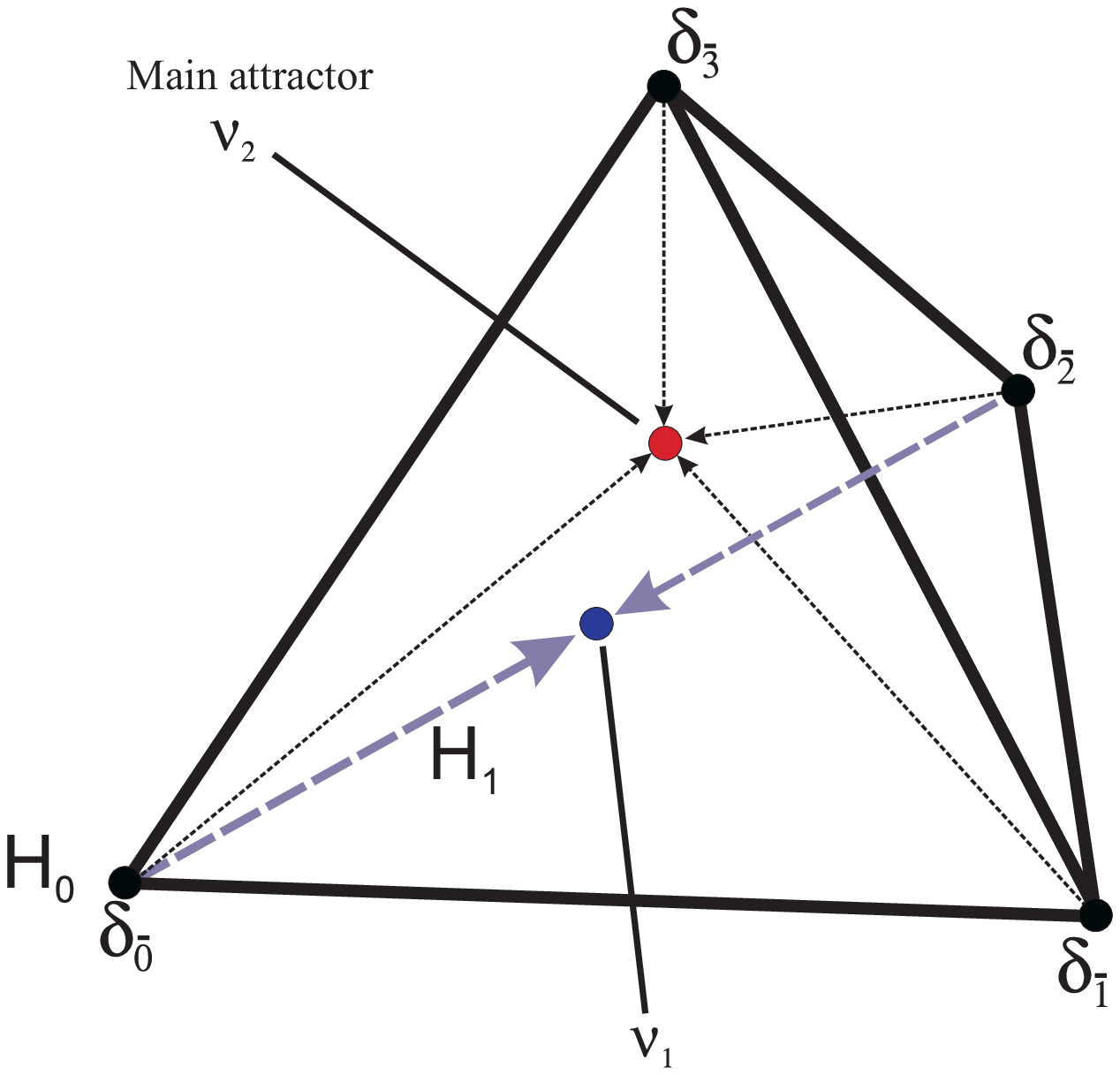}\\
\footnotesize{Spectral decomposition of a cyclic group of order 4}
\end{center}
From Theorem~\ref{hyperb} we cannot have any periodic points except the fixed ones because the flow is contractive.

\textbf{The case G=Klein} \\
If $G$ is not cyclic group, but has order 4, we have a different structure. The computations below are applications of the techniques developed in \cite{BOR}.
For $G= Klein=\{e,a, b, ab\}$, where $a^2=e, b^2=e$ we have cyclical behavior on the complement of the basin of attraction of the main attractor. From Lemma~\ref{main attrac point} we know that almost every point has the main attractor as its $\omega$-limit
$$L_{\omega}(\mu)= \{  \dfrac{1}{4}  \delta_{e} + \dfrac{1}{4}  \delta_{a} +\dfrac{1}{4}  \delta_{b} +\dfrac{1}{4}  \delta_{ab} \}, \; \; \forall \mu \in \mathcal{O},$$
where $\mathcal{O} \supseteq \{x \delta_{e} + y \delta_{a} +z  \delta_{b} +w  \delta_{ab} \; | \; x,y,z >0\}$ is the set of acyclic probabilities, that is geometrically the interior of the tetrahedron  $\mathcal{P}(G)$. Thus, we just need to understand what happens in its complement, that is, faces and edges. Let us to define some basic sets, (see figure below)\\
$\mathcal{P}(G_{a})$ the set of probabilities in $G_{a}=\{e,a\} \sim \mathbb{Z}_{2}$;\\
$\mathcal{P}(G_{b})$ the set of probabilities in $G_{b}=\{e,b\} \sim \mathbb{Z}_{2}$;\\
$\mathcal{P}(G_{ab})$ the set of probabilities in $G_{ab}=\{e,ab\} \sim \mathbb{Z}_{2}$;\\
$A= \{  x  \delta_{a} + y  \delta_{ab} \; | \; x,y >0 \}$,
$B= \{  x  \delta_{b} + y  \delta_{ab} \; | \; x,y >0 \}$,
$C= \{  x  \delta_{a} + y  \delta_{b} \; | \; x,y >0 \}$;\\
$F_{e,a,b}=\{x \delta_{e} + y \delta_{a} +z  \delta_{b} \; | \; x,y,z >0\}$;\\
$F_{e,a,ab}=\{x \delta_{e} + y \delta_{a} +z  \delta_{ab} \; | \; x,y,z >0\}$;\\
$F_{e,b,bb}=\{x \delta_{e} + y \delta_{b} +z  \delta_{ab} \; | \; x,y,z >0\}$;\\
$F_{a,b, ab}=\{x \delta_{a} + y \delta_{b} +z  \delta_{ab} \; | \; x,y,z >0\}$.
\begin{center}
\includegraphics[scale=0.5,angle=0]{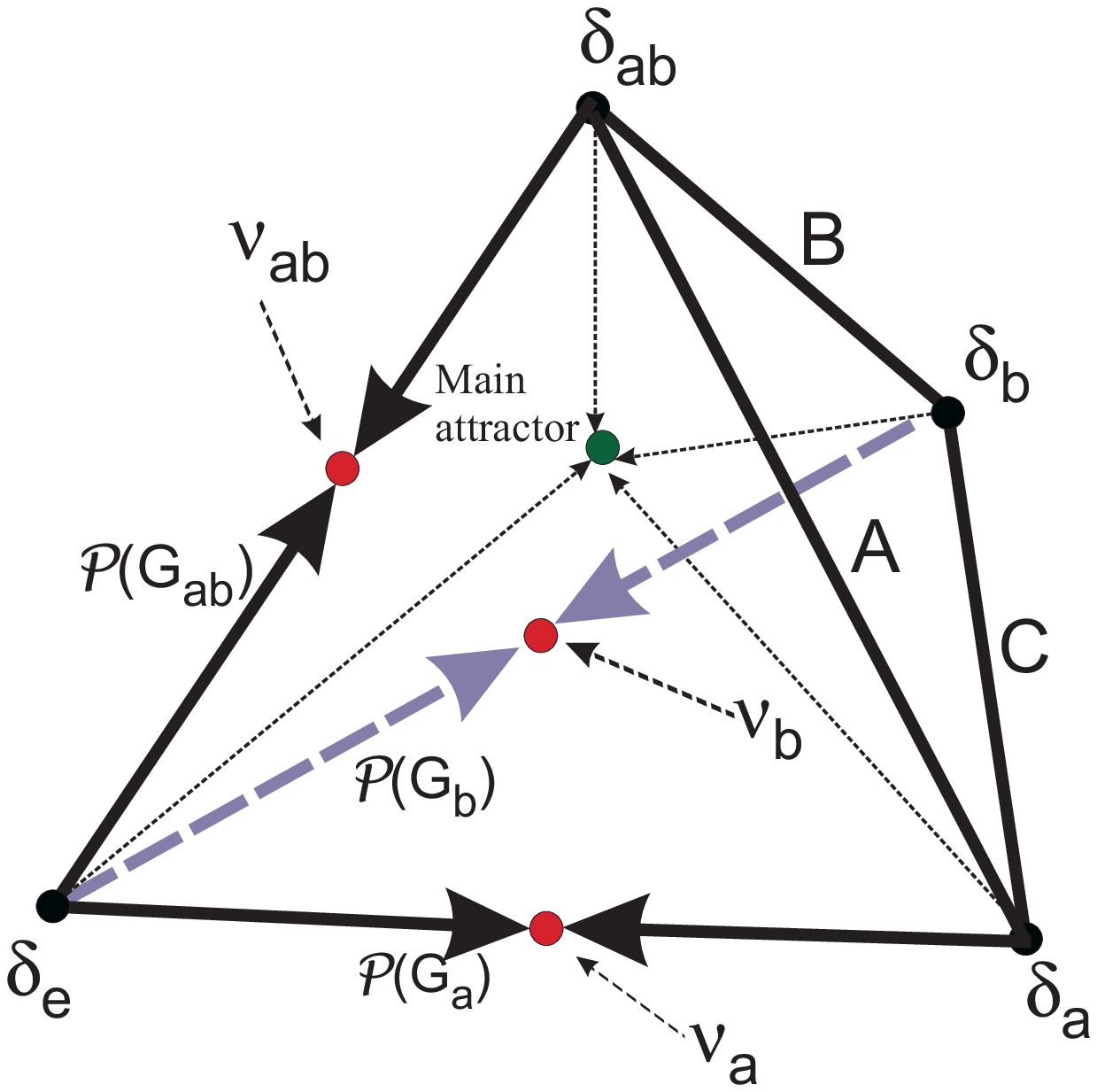}\\
\footnotesize{Spectral Decomposition of $\mathcal{P}(G)$ for a Klein Group of order 4}\\
\end{center}

First, we observe that $\mathcal{P}(G_{a})$, $\mathcal{P}(G_{b})$ and $\mathcal{P}(G_{ab})$ are invariant sets so the restriction of $Q_t$ to them  has the behavior of $\mathbb{Z}_{2}$ what means that the only $\omega$-limit point of $Q_{t}$ is $\nu_{g}=1/2 \delta_{e} + 1/2 \delta_{g}$, given by Proposition~\ref{weak hyperb}, where $g \in \{a, b, ab\}$. In the interior of the  tetrahedron faces $F_{e,a,b}$, ..., $F_{a,b, ab}$ all the probabilities are acyclic, then its unique limit set is the main attractor too.
The last sets to analyze are the interior of the edges $A$, $B$ and $C$. Since $a, b $ and $ab$ play the same role in the Klein group is enough to analyze $A$. \\
\textbf{Claim:} Given $\mu \in A$ its $\omega$-limit is the main attractor.\\
To see that, we observe that the powers of  $\mu$ accumulates in
$\{1/2 \delta_{e} + 1/2 \delta_{b}, \, 1/2 \delta_{a} + 1/2 \delta_{ab}\}$.
Indeed, if $\nu \in L_{\omega}(\mu)$ then $\nu*\mu=\nu$, in particular  $\nu*\mu^n=\nu$ for all $n$. If we compute the powers of $\mu$ (that is not acyclic) a simple computation shows that we have two subsequences:\\
$$
\mu^n \in
\left\{
  \begin{array}{ll}
     \mathcal{P}(G_{b}), & n=2k \\
      A, & n=2k+1
  \end{array}
\right.
$$
Thus, $\mu^{2k} \to 1/2 \delta_{e} + 1/2 \delta_{b}$, and by continuity
$\mu^{2k+1} = \mu^{2k}*\mu \to (1/2 \delta_{e} + 1/2 \delta_{b})*\mu = 1/2 \delta_{a} + 1/2 \delta_{ab}$, then we have a system
$$
\left\{
  \begin{array}{ll}
     \nu*(1/2 \delta_{e} + 1/2 \delta_{b})=\nu \\
      \nu*(1/2 \delta_{a} + 1/2 \delta_{ab})=\nu
  \end{array}
\right.
$$
Writing
\begin{align*}
\nu&= 1/2 \nu + 1/2 \nu = 1/2 (\nu*(1/2 \delta_{e} + 1/2 \delta_{b}))+ 1/2 (\nu*(1/2 \delta_{e} + 1/2 \delta_{b}))
\\&= \nu* (\dfrac{1}{4}  \delta_{e} + \dfrac{1}{4}  \delta_{a} +\dfrac{1}{4}  \delta_{b} +\dfrac{1}{4}  \delta_{ab})= \dfrac{1}{4}  \delta_{e} + \dfrac{1}{4}  \delta_{a} +\dfrac{1}{4}  \delta_{b} +\dfrac{1}{4}  \delta_{ab}.
\end{align*}

\end{document}